\newtheorem{theorem}{Theorem}[section]
\newtheorem{corollary}[theorem]{Corollary}
\newtheorem{lemma}[theorem]{Lemma}
\newtheorem{proposition}[theorem]{Proposition}
\theoremstyle{definition}
\newtheorem{definition}[theorem]{Definition}
\newtheorem{remark}[theorem]{Remark}
\numberwithin{equation}{section}
\newtheorem*{rep@theorem}{\rep@title}
\newcommand{\newreptheorem}[2]{%
\newenvironment{rep#1}[1]{%
 \def\rep@title{#2 \ref{##1}}%
 \begin{rep@theorem}}%
 {\end{rep@theorem}}}
\newcommand{\R}{\mathbb{R}}
\newcommand{\N}{\mathbb{N}}
\newcommand{\aff}[1]{\ell_{#1}}
\newcommand{\s}{\mathbb{S}^{n-1}}
\newcommand{\supp}{\text{supp}}
\newcommand{\conbod}{\mathcal{K}^n}
\newcommand{\vol}{\text{\rm Vol}}
\def \Rn{{\R^n}}
\newcommand{\hid}{m}
\def \Rnhi{{\R^{n\hid}}}
\def \P{\Pi^{\hid}}
\def \PP{\Pi^{\circ,\hid}}
\def \Cov{g_{K,\hid}}
\def \S{\mathbb{S}^{n\hid-1}}
\title[The $m\mathrm{th}$-Order Weighted Projection Body]{On the $m\mathrm{th}$-Order Weighted Projection Body \\ Operator and Related Inequalities}
\author[D. Langharst]{Dylan Langharst}
\address[D. Langharst]{Institut de Math\'ematiques de Jussieu, Sorbonne Universit\'e, Paris, 75252, France}
\email{{\tt dylan.langharst@imj-prg.fr}}
\author[E. Putterman]{Eli Putterman}
\address[E. Putterman]{School of Mathematical Sciences, Tel Aviv University, Tel Aviv 66978, Israel}
\email{\tt putterman@mail.tau.ac.il}
\author[M. Roysdon]{M. Roysdon}
\address[M. Roysdon]{Department of Mathematics, Applied Mathematics, and Statistics, Case Western Reserve, Cleveland, OH 44106, USA}
\email{{\tt mar327@case.edu}}
\author[D. Ye]{Deping Ye}
\address[D. Ye]{Department of Mathematics, and Statistics, Memorial University of Newfoundland, St. John’s, Newfoundland, A1C 5S7, Canada}
\email{\tt deping.ye@mun.ca}
\keywords{Projection Bodies, Rogers-Shephard Inequality, Zhang's Projection Inequality, Radial Mean Bodies}
\subjclass[2020]{52A39, 52A40;  Secondary: 28A75}
\begin{document}

\begin{abstract}
For a convex body $K$ in $\R^n$, the inequalities of Rogers-Shephard and Zhang, written succinctly, are $$\vol_n(DK)\leq \binom{2n}{n} \vol_n(K) \leq \vol_n(n\vol_n(K)\Pi^\circ K).$$ Here, $DK=\{x\in\R^n:K\cap(K+x)\neq \emptyset\}$ is the difference body of $K$, and $\Pi^\circ K$ is the polar projection body of $K$. There is equality in either if, and only if, $K$ is a $n$-dimensional simplex. In fact, there exists a collection of convex bodies, the so-called radial mean bodies $R_p K$ introduced by Gardner and Zhang, which continuously interpolates between $DK$ and $\Pi^\circ K$. For $m\in\N$, Schneider defined the $m$th-order difference body of $K$ as $$D^m(K)=\{(x_1,\dots,x_m)\in\R^{nm}:K\cap_{i=1}^m(K+x_i)\neq \emptyset\}\subset \R^{nm}$$ and proved the $m$th-order Rogers-Shephard inequality. In a prequel to this work, the authors, working with Haddad, extended this $m$th-order concept to the radial mean bodies and the polar projection body, establishing the associated Zhang's projection inequality. 
 
 In this work, we introduce weighted versions of the above-mentioned operators by replacing the Lebesgue measure with measures that have density. The weighted version of these operators in the $m=1$ case was first done by Roysdon (difference body), Langharst-Roysdon-Zvavitch (polar projection body) and Langharst-Putterman (radial mean bodies). This work can be seen as a sequel to all those works, extending them to $m$th-order. In the last section, we extend many of these ideas to the setting of generalized volume, first introduced by Gardner-Hug-Weil-Xing-Ye.
\end{abstract}

\maketitle


\section{Introduction}
\label{sec:intro}
Let $\R^n$ be the standard $n$-dimensional Euclidean space and  $\s$ be the unit sphere in $\mathbb{R}^n$.  By  $\conbod$ we mean the set of convex bodies (compact, convex sets with non-empty interiors) in $\R^n$.  Let $\conbod_0$ be the set of those convex bodies containing the origin $o$ in their interiors. The support function of $K \in \conbod$, denoted by $h_K:\R^n \to \R$ is defined by  $h_K(x)=\sup \{\langle x,y\rangle \colon y \in K \}$, where $\langle x, y\rangle$ denotes the inner product of $x, y\in \Rn$.  Let $\theta^{\perp}=\{x\in\R^n:\langle \theta,x\rangle =0\}$ be the subspace orthogonal to $\theta \in \s$. By $P_H K$ we mean the orthogonal projection of $K$ onto a linear subspace $H$. It is well known that the function $\theta\in \s \mapsto \vol_{n-1}(P_{\theta^{\perp}}K)$ is a sublinear functional and hence defines a convex body in $\mathbb{R}^n$;  such a convex body is called the \textbf{projection body} of $K$ and denoted by $\Pi K$. Hereafter,  $\vol_m$ refers to the $m$-dimensional Lebesgue measure (volume). From \eqref{eq:cauch} below, one can easily see that $\Pi K$ is an origin symmetric convex body, where a convex body $K$ is said to be origin symmetric if $K=-K$. The projection body $\Pi K$ plays important roles in affine isoperimetric inequalities including, e.g., \textbf{Zhang's projection inequality} \cite{Zhang91} and \textbf{Petty's projection inequality} \cite{CMP71}.  These two inequalities can be stated as follows: for any $K\in\conbod$, one has
\begin{equation}\label{e:Zhang_ineq}
\frac{1}{n^n} \binom{2n}{n}\leq\vol_n(K)^{n-1}\vol_n(\Pi^\circ K)\leq \left(\frac{\omega_n}{\omega_{n-1}}\right)^n,
\end{equation} where  $\omega_n$ denotes the volume of the unit ball $B_2^n$ in $\R^n$ and $$K^\circ=\left\{x\in \R^n: h_K(x) \leq 1\right\}$$ defines the polar body of $K \in\conbod_0$.  Equality holds for the first inequality in \eqref{e:Zhang_ineq} (i.e., Zhang's inequality) if, and only if, $K$ is a $n$-dimensional simplex (convex hull of $n+1$ affinely independent points); equality holds for  the second inequality in \eqref{e:Zhang_ineq} (i.e.,  Petty's inequality) if, and only if, $K$ is an ellipsoid. The proof of  Petty's inequality follows from the classical Steiner symmetrization, and the proof of Zhang's inequality, as presented in \cite{GZ98},  made critical use of the covariogram function.

The covariogram function is a fundamental notion in convex geometry and appears often in the literature. We denote by $g_K: \R^n\rightarrow \R$, the covariogram function of $K\in\conbod$, which is given by (see e.g., \cite{GB23}) 
	\begin{equation}\label{e:covario}
	    g_K(x)=\vol_n\left(K\cap(K+x)\right)=(\chi_K\star\chi_{-K})(x)\ \ \ \mathrm{for}\ x\in \R^n, 
	\end{equation}
where $(f\star g)(x)= \int_{\R^n} f(y)g(x-y)\,dy$ is a convolution of functions $f,g:\R^n \to \R$ and
	$\chi_K(x)$ is the characteristic function of $K$. 
The significance of the covariogram function can be seen immediately from the following facts. First of all, it is easily checked that the support of $g_K(x)$ is the difference body of $K$ given by 
	\begin{equation}\label{e:differnce}
	    DK=\{x:K\cap(K+x)\neq \emptyset\}=K+(-K),
	\end{equation} where the Minkowski sum of two Borel sets is given by $K+L=\{ x + y: x\in K, y\in L\}.$ 
 
 Among those important results related to the difference body are the following inequalities: for $K\in\conbod$ one has
	\begin{equation}\label{e:roger_shep}
	2^n\leq\frac{\vol_n(DK)}{\vol_n(K)}\leq \binom{2n}{n}. 
	\end{equation} The first inequality is a direct consequence of the Brunn-Minkowski inequality (see e.g. \cite{G20}), and equality holds if, and only if, $K$ is symmetric about a point (i.e. a translate of $K$ is origin symmetric). This provides a measurement for the symmetry of convex bodies. The second inequality is the famous  \textbf{Rogers-Shephard inequality} \cite{RS57}, in which equality holds if, and only if, $K$ is a $n$-dimensional simplex. Chakerian \cite{Cha67} was the first to show the connection between the covariogram and the difference body, yielding a succinct proof of the Rogers-Shephard inequality. The proof uses another property of the covariogram function: it inherits the $1/n$ concavity property of the Lebesgue measure, because the covariogram function is defined as a convolution of characteristic functions of convex bodies. 
 
 Secondly, the classical result by Matheron \cite{MA} shows  that $\frac{dg_{K}(r\theta)}{dr}\big|_{r=0^+}=-h_{\Pi K}(\theta).$ Thus, the covariogram function naturally leads to the projection body via the variational approach, which is one of the crucial steps in the proof of Zhang's projection inequality in \cite{GZ98}. In particular, Gardner and Zhang showed that 
\begin{equation}
    \label{e:set_inclusion_og}
    D K \subseteq n \vol_n(K) \Pi^{\circ} K,
\end{equation}
with equality if, and only if, $K$ is a $n$-dimensional simplex. 

Formula \eqref{e:covario} for the covariogram function $g_K$ can be adapted to define the $m$th-order covariogram function in a natural way. To do this, we always identify, throughout this paper,  $\R^{n\hid}$ with the product structure $ \R^n \times \cdots \times \R^n$, and a point in $\Rnhi$ is often written as $\bar{x}=(x_1,\dots,x_\hid)$, with each $x_i\in\Rn$. Note that the product structure 
is of central importance in later context. Schneider \cite{Sch70} proposed the $\hid$\textit{th-order covariogram function} $\Cov: \R^{mn}\rightarrow \R$ for $K\in\conbod$ and $\hid\in\mathbb{N}$ as follows:  
\begin{equation}
\Cov(\overline{x})=\vol_n\left(K\cap_{i=1}^\hid (x_i+K)\right) \ \ \ \mathrm{for}\ \bar{x}\in \R^{mn}.
\label{eq_vol_ell_co}
\end{equation} When $m=1$, one obtains the covariogram function $g_K$ in \eqref{e:covario}. Consequently, one can define the $\hid$\textit{th-order difference body of K}, denoted by $D^\hid(K)$, as the support of the $\hid$th-order covariogram function $\Cov$.  Clearly $D^\hid(K) $ is a convex body in $(\R^{n})^\hid=\R^{n\hid}$ and
\[
D^\hid(K) := \{\overline{x}=(x_1,\dots,x_{\hid}) \in (\R^n)^{\hid} \colon K \cap_{i=1}^\hid (x_i+K) \neq \emptyset \}.
\]  An important inequality related to  
$D^\hid(K)$ is the $m$th-order Rogers-Shephard inequality proved by Schneider  \cite{Sch70}:
\begin{equation}
    \vol_n(K)^{-\hid}\vol_{n\hid}\left(D^\hid (K)\right)\leq \binom{n\hid +n}{n}
    \label{eq:RSell},
\end{equation}
with equality if, and only if, $K$ is a $n$-dimensional simplex. A lower bound for the left-hand side of \eqref{eq:RSell}, which can be shown to be nonzero due to affine invariance, is obtained for arbitrary $\hid\in\mathbb{N}$ and for every symmetric convex body $K$ when $n=2$, and is conjectured to be attained by ellipsoids when $n\geq 3$ and $\hid\geq 2$. 

Through a variational approach, the $\hid$th-order covariogram function $\Cov$ also leads to the $m$th-order projection body of $K$. In \cite{HLPRY23}, together with Haddad, the authors proved that  \begin{equation}
    \frac{d}{dr}\Cov(r\overline{\theta}) \bigg|_{r=0^+}=-h_{\Pi^\hid K}(\overline{\theta}) \ \ \ \mathrm{for} \ \bar{\theta}\in \S,
    \label{eq_vol_ell_co_deriv}
\end{equation}
where $\Pi^\hid K$, the $\hid$\textit{th-order projection body} of $K\in \conbod$, is the $n\hid$-dimensional convex body whose support function is given by
$$h_{\Pi^\hid K}(\overline{x})=
h_{\Pi^{\hid}K}(x_1,\dots,x_{\hid}) = \int_{\partial K} \max_{1 \leq i \leq \hid} \langle x_i,n_K(y) \rangle_- dy.
$$
In the above formula, $\partial K$ denotes the topological boundary of $K$, $dy=d\mathcal{H}^{n-1}(y)$, $\mathcal{H}^{n-1}$ is the $(n-1)$-dimensional Hausdorff measure on $\partial K$, $n_K:\partial K \rightarrow \s$ is the Gauss map, which associates $y\in \partial K$ with its outer unit normal, and, for $a \in \mathbb R$, $a_- = -\min(a, 0)$. The sharp upper and lower bounds for $\vol_n(K)^{n\hid-\hid}\vol_{n\hid}\left( \Pi^{\circ,\hid} K \right)$, with  $\Pi^{\circ,\hid} K= (\Pi^{\hid} K)^{\circ}$, have been established in \cite{HLPRY23}. As an example, we only mention the lower bound, i.e, the $m$th-order Zhang's projection inequality  \cite{HLPRY23}: \begin{theorem}\label{t:ZhangInequalityVol}
Fix $\hid \in \N$ and $K\in\conbod.$ Then, one has 
\[
\vol_n(K)^{n\hid-\hid}\vol_{n\hid}\left( \Pi^{\circ,\hid} K \right) \geq \frac{1}{n^{n\hid}}\binom{n\hid+n}{n}.
\] Equality occurs if, and only if, $K$ is a $n$-dimensional simplex. The latter is also equivalent to the condition $D^\hid(K)= n\vol_n(K)\Pi^{\circ,\hid}K.$
\end{theorem}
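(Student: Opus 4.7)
The plan is to adapt the classical Gardner--Zhang variational argument to the $\hid$th-order setting, with the covariogram $\Cov$ as the central analytic object. The key structural fact is that $\Cov^{1/n}$ is concave on its support $\D(K)$: the incidence set
\[
A_K = \{(\bar x, z)\in \Rnhi\times\Rn : z\in K,\ z - x_i \in K \text{ for } i=1,\dots,\hid\}
\]
is a linear image of $K^{\hid+1}$ under $(z,y_1,\dots,y_\hid)\mapsto (z-y_1,\dots,z-y_\hid,z)$, hence convex, and its fiber over $\bar x$ has $n$-dimensional volume $\Cov(\bar x)$; slicing with Brunn--Minkowski gives the claimed concavity. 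Pairing this with the variational formula \eqref{eq_vol_ell_co_deriv} produces the tangent bound
\[
\Cov(r\bar\theta)^{1/n} \le \vol_n(K)^{1/n}\left(1 - \frac{r\,h_{\P K}(\bar\theta)}{n\vol_n(K)}\right), \qquad \bar\theta\in\S,\ r\ge 0,
\]
whenever the right-hand side is non-negative. Setting it to zero yields at once the inclusion $\D(K)\subseteq n\vol_n(K)\,\PP K$.

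For the inequality itself, I would compute $\int_{\Rnhi}\Cov(\bar x)\,d\bar x$ in two ways. A Fubini computation from \eqref{eq_vol_ell_co} immediately gives $\int_{\Rnhi}\Cov(\bar x)\,d\bar x = \vol_n(K)^{\hid+1}$. On the other hand, since $\Cov$ is supported in $\D(K)\subseteq n\vol_n(K)\,\PP K$, applying the tangent bound pointwise and passing to polar coordinates on $\Rnhi$ yields
\[
\vol_n(K)^{\hid+1} \le \vol_n(K)\int_{\S}\int_{0}^{n\vol_n(K)/h_{\P K}(\bar\theta)}\left(1 - \frac{r\,h_{\P K}(\bar\theta)}{n\vol_n(K)}\right)^{n}r^{n\hid-1}\,dr\,d\bar\theta.
\]
The substitution $s = r\,h_{\P K}(\bar\theta)/(n\vol_n(K))$ turns the inner integral into $(n\vol_n(K)/h_{\P K}(\bar\theta))^{n\hid}\,B(n\hid,n+1)$ with $B(n\hid,n+1) = 1/[n\hid\binom{n\hid+n}{n}]$; the remaining angular integral equals $n\hid\,\vol_{n\hid}(\PP K)$ by the standard polar formula for the volume of the polar body. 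Telescoping produces $\vol_n(K)^{\hid+1} \le n^{n\hid}\vol_n(K)^{n\hid+1}\vol_{n\hid}(\PP K)/\binom{n\hid+n}{n}$, which rearranges to the claimed inequality.

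The main obstacle I anticipate is the equality analysis. Equality throughout the chain forces $\Cov(r\bar\theta)^{1/n}$ to be \emph{affine} in $r$ along every ray inside $\D(K)$; equating the initial slope with the chord slope then gives $\rho_{\D(K)}(\bar\theta)\,h_{\P K}(\bar\theta) = n\vol_n(K)$ for all $\bar\theta\in\S$, i.e.\ $\D(K) = n\vol_n(K)\,\PP K$. To conclude that $K$ must be a simplex, I would specialize to unit vectors of the form $\bar\theta = (\theta, 0, \dots, 0)\in\S$ with $\theta\in\s$: along such rays $\Cov(r\bar\theta)$ reduces to the ordinary covariogram $g_K(r\theta)$ and $h_{\P K}(\bar\theta)$ reduces to $h_{\Pi K}(\theta)$, so affinity of $g_K(r\theta)^{1/n}$ for every $\theta\in\s$ is precisely the classical Gardner--Zhang equality condition, which forces $K$ to be a simplex. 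A direct computation for simplices closes the equivalence with $\D(K) = n\vol_n(K)\,\PP K$ and confirms sharpness.
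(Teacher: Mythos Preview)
Your proposal is correct and follows essentially the same route as the paper's Theorem~\ref{t:ZhangInequality} specialized to $\mu=\nu_i=\lambda$ and $F(t)=t^{1/n}$: concavity of $\Cov^{1/n}$, the tangent bound from the variational formula \eqref{eq_vol_ell_co_deriv}, and integration in polar coordinates (which the paper packages as the chord-integral Lemma~\ref{t:chak}). The only substantive difference is the equality analysis: you reduce to the classical $m=1$ Gardner--Zhang characterization by specializing to $\bar\theta=(\theta,0,\dots,0)$, whereas the paper (Proposition~\ref{p:simp}) argues intrinsically that affinity of $\Cov^{1/n}$ on rays forces $K\cap(K+x)$ to be homothetic to $K$ for every $x\in DK$, and hence $K$ a simplex.
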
 Note that the $m$th-order Zhang's projection inequality was proved in  \cite{HLPRY23}  by using the following chain of set inclusions: for $K\in\conbod$, $\hid\in\mathbb{N}$, and $-1< p\leq q < \infty$, one has
$$D^\hid (K) \subseteq {\binom{q+n}{n}}^{\frac{1}{q}} R^\hid_{q}K \subseteq {\binom{p+n}{n}}^{\frac{1}{p}} R^\hid_{p} (K)\subseteq n\vol_n(K)\PP K,$$ with equality in any set inclusion if, and only if, $K$ is an $n$-dimensional simplex. Here $R^\hid_{p} (K)$ is the $(\hid, p)$ radial mean body of $K$ (i.e., the special case of Definition \ref{definition-m-radial mean body} where $\mu$ is the Lebesgue measure in $\R^n$). Note that the above chain of set inclusions provides an alternative proof for the inequality \eqref{eq:RSell} proven by Schneider. When $m=1$, the chain of set inclusions reduces to set inclusions for the {\it radial mean bodies} $R_p K$ shown by Gardner and Zhang \cite{GZ98}. 

The aforementioned $m$th-order results are based on the Lebesgue measure on $\R^n.$ A natural question to ask is whether these results can be extended to general measures on $\R^n.$ A major goal of this paper is to study the $m$th-order weighted projection body and prove a related Zhang's projection inequality. Before we do this, we now introduce some basic notions. 

 For convenience, let  $\lambda_n$ denote the Lebesgue measure on $\Rn$; we will simply write $\lambda$ when possible. We say a measure has density if it has density with respect to $\lambda$. That is, a measure has density if
\[
\frac{d\mu}{d\lambda} = \phi, \ \ \text{ with }\  \phi \colon \R^n \to \R^+,\ \ \phi\in L^1_{\text{loc}}(\R^n). 
\]
All such measures are examples of \textit{Radon measures}, i.e., locally finite and regular Borel measures on $\R^n$. For a fixed $K\in\conbod$, we denote by $\mathcal{M}(K)$ the class of Radon measures $\mu$ on $\Rn$ such that $\mu$ has locally integrable density $\phi \colon \R^n \to \R^+$, and additionally contains $K$ in its support and $\partial K$ in its Lebesgue set. For $\mu\in\mathcal{M}(K)$, define the Borel measure $S^\mu_K$ on $\s$, called the \textit{weighted surface area measure}, by the following formula: 
$$\int_{\s}f(u) \, dS^{\mu}_K(u) = \int_{\partial K} f(n_K(y))\phi(y)\,dy$$ for $f\in {C}(\s),$ where  ${C}(\s)$ denotes the set of all continuous functions on $\s.$ Here, $\phi$ is understood on $\partial K$ as the representative of $\phi$ given by the Lebesgue differentiation theorem. A more ``hands-on" definition will be given in Definition~\ref{def:surface_mu}; note that the measure $S^{\mu}_K$ satisfies \eqref{eq:surface_mu} in that definition. It can be checked that  
\begin{equation}\label{eq_bd}S^{\mu}_K(\mathbb{S}^{n-1})=
    \mu^+(\partial K) := \liminf_{\epsilon\to 0}\frac{\mu\left(K+\epsilon B_2^n\right)-\mu(K)}{\epsilon} = \int_{\partial K}\phi(y)\,dy.
\end{equation} 
 The \textit{weighted projection body} of a convex body $K\in \conbod$ \cite{LRZ22}, denoted $\Pi_\mu K,$ can be defined via its support function: 
\begin{equation}
\begin{split}
    h_{\Pi_{\mu}K}(\theta)= \int_{\partial K} \langle n_K(y),\theta\rangle_{-} \phi(y)\, d y \ \ \mathrm{for} \ \ \theta\in \s.
    \end{split}
    \label{e:mu_polar_suppot}
\end{equation} Note that $h_{\Pi_{\mu}K}$ is clearly positive on $\s$ if $\phi$ is positive almost-everywhere on $\partial K$, and hence $\Pi_\mu K\in\conbod_0$. As the case for the Lebesgue measure on $\R^n,$ $\Pi_\mu K$ can be linked with a weighted covariogram function by a variational approach. 
Let $K\in\conbod$ and $\mu$ be a Borel measure on $\Rn$. The $\mu$-\textit{covariogram} of $K$ is the function given by
\begin{equation} 
g_{\mu,K}(x)=\mu(K\cap(K+x)).
\label{covario}
\end{equation} 
In \cite{LRZ22}, the radial derivative of $g_{\mu,K}$ at the origin was computed under some mild conditions, which were later dropped in \cite{LP23}: 
\begin{equation}\label{e:deriv_g_mu_covario}
    \frac{dg_{\mu,K}(r\theta)}{dr}\bigg|_{r=0^+}=-h_{\Pi_{\mu}K}(\theta),
    \end{equation}
when $\mu\in\mathcal{M}(K)$. Combining \eqref{eq_vol_ell_co} and \eqref{covario}, one can define the $m$th-order $\mu$-\textit{covariogram} of $K$ as follows:  

\begin{definition} \label{eq_higher_co} Let $n,\hid \in \N$, $K\in\conbod$, and $\mu$ be a Borel measure on $\R^n$ containing $K$ in its support. The $m$th-order $\mu$-\textit{covariogram} of $K$ is the function $g_{\mu,\hid}(K,\cdot) \colon \R^{n\hid} \to \R^+$ given by 
\[
g_{\mu,\hid}(K, \bar x) = g_{\mu,\hid}(K,(x_1,\dots,x_{\hid})) = \int_{K} \left(\prod_{i=1}^{\hid} \chi_{K}(y-x_i)\right)\,d\mu(y).
\]
\end{definition} Clearly, $D^\hid(K)$ is the support of $g_{\mu,\hid}(K,\cdot)$, and 

\begin{equation}
g_{\mu,\hid}(K, \bar x) =\mu\left(K\cap_{i=1}^\hid (x_i+K)\right) \ \ \ \mathrm{for}\ \bar{x}\in \R^{mn}.
\label{eq_vol_ell_co-mu}
\end{equation}
 We follow in the footsteps of \cite{HLPRY23} and begin by taking the radial derivative of $g_{\mu,\hid}(K,\cdot).$

\begin{theorem} \label{t:variationalformula}
Let $K\in\conbod$, $\hid \in \N$, and $\phi$ be the density of a Borel measure $\mu\in\mathcal{M}(K)$. For every direction $\bar\theta = (\theta_1,\dots,\theta_{\hid}) \in \S$, one has 
\[
\frac{d}{dr} \left[g_{\mu,\hid}(K,r\bar\theta) \right]\bigg|_{r=0^+}
= -\int_{\partial K} \max_{1 \leq i \leq \hid} \langle \theta_i, n_K(y) \rangle_{-} \phi(y)\,dy.\] 
\end{theorem}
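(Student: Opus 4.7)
The plan is to express the radial derivative as the $\mu$-measure of a thin shell abutting $\partial K$, and then identify that shell measure with the claimed surface integral via a partition of $\partial K$ reducing to the known $\hid=1$ case \eqref{e:deriv_g_mu_covario}. This parallels the strategies used in \cite{HLPRY23} (for the Lebesgue $\hid$-case) and \cite{LRZ22, LP23} (for the weighted $\hid=1$ case).

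Begin with the identity
\[
g_{\mu,\hid}(K,0) - g_{\mu,\hid}(K, r\bar\theta) \;=\; \mu(E_r),\qquad E_r := \bigcup_{i=1}^{\hid}\bigl(K \setminus (r\theta_i + K)\bigr),
\]
so the sought derivative equals $-\lim_{r\to 0^+}\mu(E_r)/r$. The geometric content is the following: for $y \in \partial K$ where the outer normal $n_K(y)$ exists, a local tangent-plane analysis shows that each cap $K\setminus(r\theta_i + K)$ near $y$ is, to first order in $r$, a slab of inward depth $r\langle \theta_i, n_K(y)\rangle_-$; these slabs are locally parallel, so their union coincides to first order with the deepest slab, of depth $r\max_{1\le i\le \hid}\langle \theta_i, n_K(y)\rangle_-$.

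To turn this into a rigorous first-order computation, I would partition $\s$ into Borel sets $U_1,\dots,U_{\hid}$ on which the maximum $\max_i\langle \theta_i, \cdot\rangle_-$ is attained at index $i=j$ (resolving ties by a fixed convention), and set $W_j := n_K^{-1}(U_j) \subseteq \partial K$. On a thin tubular neighborhood $N_j$ of $W_j$ (chosen pairwise disjoint by using the nearest-point projection to $\partial K$), the set $E_r \cap N_j$ agrees with $(K\setminus(r\theta_j + K)) \cap N_j$ up to an $o(r)$ contribution from the $\mathcal H^{n-1}$-negligible boundary of $W_j$ inside $\partial K$. Applying the $\hid=1$ variational formula \eqref{e:deriv_g_mu_covario} localized to each $W_j$ and summing then gives
\[
\lim_{r\to 0^+}\frac{\mu(E_r)}{r} \;=\; \sum_{j=1}^{\hid}\int_{W_j}\langle \theta_j, n_K(y)\rangle_-\,\phi(y)\,dy \;=\; \int_{\partial K}\max_{1\le i\le \hid}\langle \theta_i, n_K(y)\rangle_-\,\phi(y)\,dy,
\]
which, together with the overall minus sign, is the claim.

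The main technical obstacle is controlling the overlap error near the common boundaries of the $W_j$ and at non-smooth points of $\partial K$ where $n_K$ is undefined. Since the Gauss map is defined $\mathcal H^{n-1}$-a.e.\ on $\partial K$ and $\partial K$ lies in the Lebesgue set of $\phi$ (the defining feature of $\mu\in\mathcal M(K)$), a dominated-convergence argument in the spirit of \cite{LP23} shows that these exceptional contributions are $o(r)$, justifying the interchange of limit and integral and completing the proof.
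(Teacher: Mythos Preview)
Your approach differs substantively from the paper's. Rather than partitioning $\partial K$ and reducing to the $\hid=1$ case, the paper expresses $K_r := K \cap_{i=1}^{\hid}(K + r\theta_i)$ directly as a Wulff shape: setting $\theta_0 = 0$ and $f(u) = \min_{0\le i\le \hid}\langle u,\theta_i\rangle = -\max_{1\le i\le \hid}\langle u,\theta_i\rangle_-$, one checks that $K_r = [h_K + rf]$. A single application of Aleksandrov's variational formula for arbitrary measures (Lemma~\ref{l:second}, imported from \cite{KL23}) then gives $\frac{d}{dr}\mu(K_r)\big|_{r=0^+} = \int_{\s} f\,dS^\mu_K$, which is exactly the claimed expression. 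No partition, no overlap analysis, no localization.

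Your strategy is plausible in outline, but the step ``applying \eqref{e:deriv_g_mu_covario} localized to each $W_j$'' is a genuine gap: \eqref{e:deriv_g_mu_covario} is a global identity, and what you actually need is a \emph{local} first-variation statement of the form
\[
\frac{1}{r}\,\mu\bigl((K\setminus(r\theta+K))\cap N_W\bigr)\ \longrightarrow\ \int_{W}\langle\theta,n_K(y)\rangle_-\,\phi(y)\,d\mathcal H^{n-1}(y)
\]
for an arbitrary Borel patch $W\subset\partial K$ and a tube $N_W$ over it. That is not in the cited references, and establishing it---together with a rigorous construction of disjoint tubes $N_j$ and an honest $o(r)$ bound on the cap-overlap near the seams $\partial W_j$ for a general convex body---is roughly as much work as the theorem itself. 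The Wulff-shape route absorbs all of this into one existing lemma, at the cost of importing that lemma as a black box.
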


 Motivated by Theorem~\ref{t:variationalformula}, we give the following definition for the $\mu$-weighted $\hid$th-order projection body of $K$.

\begin{definition} 
\label{def:meas_bodies}
Given any $K \in \conbod$, $\hid\in\N$, and a measure $\mu\in\mathcal{M}(K)$, we define the $\mu$- weighted $\hid$th-order projection body of $K$ to be the $n\hid$-dimensional convex body $\P_\mu K$ with support function
\[
h_{\P_\mu K}(x_1,\dots,x_{\hid}) = \int_{\s} \max_{1 \leq i \leq \hid} \langle x_i,u \rangle_- dS^{\mu}_K(u), \quad x_i\in\R^n, i=1,\dots,m. 
\]
\end{definition}

With this definition, we can rephrase the result of Theorem~\ref{t:variationalformula} in the following way: let $\mu\in\mathcal{M}(K)$ for a fixed $K\in\conbod$. Then, 
$$
\frac{d}{dr} \left[g_{\mu,\hid}(K,r\bar\theta) \right]\bigg|_{r=0^+}=-h_{\P_\mu K}(\bar\theta).$$
 When comparing the above formulation of Theorem~\ref{t:variationalformula} with \eqref{e:deriv_g_mu_covario} we indeed obtain $h_{\Pi_{\mu}^1 K}(\theta_1)=h_{\Pi_{\mu}K}(\theta_1)$; also, we have $h_{\Pi_{\lambda}^{\hid}K}(\bar\theta)=h_{\P K}(\bar\theta).$ We set $\PP_\mu K:=(\P_\mu K)^\circ$. Using $g_{\mu,\hid}(K,\cdot)$, we define in Definition 
\ref{definition-m-radial mean body} the $\mu$-weighted, $m$th-order radial mean bodies and prove the following theorem. This theorem, which is stated for $s$-concave measures, $s>0$, is merely a special case of Theorem~\ref{t:radial_F_inclusions}.

\begin{theorem}
\label{t:set_s_con}
Let $K\in\conbod$ and $\hid\in\N$ be fixed. Suppose that $\mu$ is an $s$-concave Borel measure, $s>0,$ on convex subsets of $K$. Then, for $-1< p\leq q < \infty$, one has
$$D^\hid (K)\subseteq \binom{\frac{1}{s}+q} {q}^{\frac{1}{q}} R^\hid_{q,\mu}K \subseteq \binom{\frac{1}{s}+p} {p}^{\frac{1}{p}} R^\hid_{p,\mu} K\subseteq \frac{1}{s}\mu(K)\PP_{\mu}K,$$
where the last inclusion holds if $\mu\in\mathcal{M}(K)$.

\vskip 2mm  There is an equality in any set inclusion if, and only if, $$g_{\mu,\hid}^s(K,x)=\mu(K)^s\ell_{D^\hid (K)}(x),$$ where $\ell_K$ is the \textit{roof function} of $K$, defined in \eqref{eq:lk} below. If $\mu$ is a locally finite and regular Borel measure, i.e., $s$-concave on compact subsets of its support, then $s\in (0,1/n]$ and equality occurs if, and only if, $K$ is a $n$-dimensional simplex, $\mu$ is a positive multiple of the Lebesgue measure, and $s = \frac{1}{n}$.
\end{theorem}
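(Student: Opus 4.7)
My plan is to follow the Lebesgue-case template of \cite{HLPRY23}, replacing Brunn--Minkowski by the assumed $s$-concavity of $\mu$ throughout. The first step is to show that $g_{\mu,\hid}(K,\cdot)^{s}$ is concave on $D^{\hid}(K)$. This would follow by combining the Minkowski-type concavity of the set-valued map $\bar x \mapsto K\cap \bigcap_{i=1}^\hid (K+x_i)$---namely,
\[K\cap\bigcap_{i=1}^\hid \bigl(K+\lambda x_i+(1-\lambda) y_i\bigr)\supseteq \lambda\Bigl(K\cap\bigcap_{i=1}^\hid(K+x_i)\Bigr)+(1-\lambda)\Bigl(K\cap\bigcap_{i=1}^\hid(K+y_i)\Bigr)\]
---with the $s$-concavity of $\mu$ (each of the sets above is a convex subset of $K$). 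Since $g_{\mu,\hid}(K,0)=\mu(K)$ and $g_{\mu,\hid}(K,\bar y)=0$ for $\bar y\in\partial D^\hid(K)$, concavity of $g_{\mu,\hid}(K,\cdot)^s$ along rays from the origin will then yield the roof-function bound $g_{\mu,\hid}(K,\bar x)^s\geq \mu(K)^s\,\ell_{D^\hid(K)}(\bar x)$. Substituting this estimate into the defining integral of $R^{\hid}_{q,\mu}K$ for $q>0$ and evaluating the resulting Beta integral ($qB(q,1/s+1)=\binom{1/s+q}{q}^{-1}$) proves the first inclusion.

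The middle inclusion I would reduce to a one-variable monotonicity lemma extending Gardner--Zhang \cite{GZ98} and its weighted analogue in \cite{LP23}: for any $s$-concave $f\colon[0,\rho]\to \R_+$ with $f(0)>0$ and $f(\rho)=0$, the map
\[p \longmapsto \binom{\tfrac{1}{s}+p}{p}^{1/p}\left(\frac{1}{f(0)}\int_0^\rho p\,r^{p-1}\,f(r)\,dr\right)^{1/p}\]
is non-increasing on $(-1,\infty)\setminus\{0\}$, and is constant on two distinct values of $p$ only when $f(r)=f(0)(1-r/\rho)^{1/s}$ (for $-1<p<0$ the integral is defined via an integration-by-parts regularization). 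Applying this lemma with $f(r)=g_{\mu,\hid}(K, r\bar u)$ and $\rho=\rho_{D^\hid(K)}(\bar u)$ yields the middle inclusion. For the last inclusion I would take the limit $p\to -1^+$ of the same monotone quantity: the expansions $\binom{1/s+p}{p}\sim s(p+1)$ and $\int_0^\rho r^p f'(r)\,dr\sim f'(0^+)/(p+1)$ balance to give an endpoint value $f(0)/(-s f'(0^+))$, which by the variational identity $\left.\tfrac{d}{dr}g_{\mu,\hid}(K,r\bar u)\right|_{r=0^+}=-h_{\P_\mu K}(\bar u)$ of Theorem \ref{t:variationalformula} equals $\tfrac{1}{s}\mu(K)\rho_{\PP_\mu K}(\bar u)$.

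For the equality case, tracing back through the chain forces equality in the one-variable lemma for every direction $\bar u\in \S$, which is exactly the pointwise identity $g_{\mu,\hid}(K,\bar x)^s=\mu(K)^s\,\ell_{D^\hid(K)}(\bar x)$. When $\mu$ is a locally finite Radon measure with density, Borell's classification of $s$-concave measures on $\R^n$ restricts $s$ to $(0,1/n]$; moreover, $s=1/n$ forces the density to be (essentially) constant, so $\mu$ is a positive multiple of $\lambda_n$. The problem then reduces to the Lebesgue-measure case and, via the $\hid$th-order Rogers--Shephard equality statement of Schneider (see the discussion around \eqref{eq:RSell}), $K$ must be an $n$-dimensional simplex.

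I expect the hardest parts to be (i) the one-variable monotonicity lemma above---particularly in the range $-1<p<0$, where the defining integral is singular at $0$ and requires the regularized formulation---together with its equality characterization, and (ii) the endpoint limit at $p\to -1^+$ identifying the supremum with $\PP_\mu K$, where the diverging behaviors of $\binom{1/s+p}{p}^{1/p}$ and $\rho_{R^{\hid}_{p,\mu}K}$ must be balanced against one another and matched against the variational derivative supplied by Theorem \ref{t:variationalformula}. These ingredients are adaptations of known techniques from \cite{GZ98} and \cite{LP23}, but all constants must be recomputed with the general parameter $s$ in place of the fixed value $1/n$ coming from Brunn--Minkowski.
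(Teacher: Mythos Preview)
Your approach is essentially the one taken in the paper. The paper packages the chain of inclusions into a general result for $F$-concave measures (Theorem~\ref{t:radial_F_inclusions}) and then specializes to $F(t)=t^s$; the ingredients are exactly those you list: concavity of $g_{\mu,\hid}(K,\cdot)^s$ inherited from the set inclusion \eqref{eq:kt_supset} together with the $s$-concavity of $\mu$ (Lemma~\ref{l:higher_covario}), Berwald's inequality from \cite{LP23} for the middle inclusion, and the endpoint limit $p\to(-1)^+$ via the renormalization $(p+1)^{1/p}R^\hid_{p,\mu}K\to\mu(K)\PP_\mu K$ (Proposition~\ref{p:radial_body_limit}) combined with Theorem~\ref{t:variationalformula}. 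Your Beta-integral computation for the first inclusion and your asymptotics at $p=-1$ are correct.

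There is, however, a genuine gap in your treatment of the equality case under the Radon hypothesis. You correctly extract the pointwise identity $g_{\mu,\hid}(K,\cdot)^s=\mu(K)^s\ell_{D^\hid(K)}$ and quote Borell to get $s\in(0,1/n]$, but you then \emph{assume} $s=1/n$ in order to conclude that the density is constant and reduce to the Lebesgue case. Nothing you have written rules out $s<1/n$. The paper handles this via Proposition~\ref{p:simp}, whose proof is not a routine reduction: one first uses the equality characterization of Milman--Rotem \cite{MR14} for the $s$-concave Brunn--Minkowski inequality to deduce that $K\cap(K+x)$ is homothetic to $K$ for every $x\in\mathrm{int}(DK)$ (hence $K$ is a simplex), and simultaneously that the density $\phi$ satisfies $\phi(A_x y)=c(x)\phi(y)$ for the homothety $A_x$ carrying $K$ onto $K\cap(K+x)$. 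A Brouwer fixed-point argument then forces $c(x)=1$, and a combinatorial argument exploiting the simplex structure shows $\phi$ is constant on $K$; only \emph{after} this does one conclude that the situation is that of Lebesgue measure, whence $s=1/n$. This step is the one you underestimate---it is not covered by Schneider's equality case for \eqref{eq:RSell}, which already presupposes Lebesgue measure---and it should be added to your list of hardest parts.
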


Our main results below, Theorem~\ref{t:radial_F_inclusions} and Theorem~\ref{t:set_inclu_log}, require the measure to have concavity of some sort. It is natural to ask if variants of these inequalities can hold for measures without any concavity assumptions. While we cannot definitively say that this is not possible, we are skeptical of such a possibility; such a method would also require providing a proof of the radial mean body set inclusions (and of the Rogers-Shephard inequality and the Zhang's projection inequality) in the case of the Lebesgue measure \textit{without} using the concavity of the Lebesgue measure. Even among $F$-concave measures, it is very unlikely to obtain a result as elegant as Theorem~\ref{t:set_s_con} with concise equality conditions. In this general framework, $s$-concave measures are essentially the only measures we have a complete characterization of. Progress beyond this classification includes the Ehrhard inequality \cite{EHR1,EHR2} for the Gaussian measure, and such a concavity is a special case of the concavity from Theorem~\ref{t:set_inclu_log}. There is also the resolution of the Gardner-Zvavitch conjecture, the fact that the Gaussian measure is $1/n$-concave on the class of origin symmetric convex bodies \cite{GZ10,KL21,EM21}; this was later extended to all rotational invariant, log-concave measures \cite{CER23}. We leave to the reader to verify that, formally, the theorems in this work are compatible with this concavity if the additional assumption of symmetry about the origin is made on the convex body $K$ and the covariogram function $g_{\mu,\hid}(K,\cdot)$ is replaced by the $m$th-order \textit{polarized covariogram}, which can be defined as $r_{\mu,\hid}(K,\bar x)=\mu\left((K-\frac{x_i}{2})\cap_{i=1}^\hid(K+\frac{x_i}{2})\right)$. As shown when $\hid=1$ in \cite{LRZ22,LP23}, this covariogram will inherit any concavity of the measure $\mu$ over the class of origin symmetric convex bodies. Then, the proofs for the relevant versions of the main theorems (in particular Theorem~\ref{t:set_s_con} with $s=1/n$) are the same line-by-line, after replacing $g_{\mu,\hid}(K,\cdot)$ in the appropriate definitions (one would also have to change, for example, the definition of the projection body when proving the variant of Theorem~\ref{t:variationalformula}.)

The paper is organized as follows. In Section~\ref{sec_summary}, we will state some facts from convex geometry that will be used in this paper. Theorem \ref{t:ZhangInequalityVol}, and also equations \eqref{eq_vol_ell_co} and \eqref{eq_vol_ell_co_deriv} are special cases of more general results presented later in this work. In particular, Section~\ref{sec:higher_dim} explores the weighted, $m$th-order analogues of projection bodies and proves the associated Zhang's projection inequality. In Section~\ref{sec:new_rad}, we study $m$th-order, weighted radial mean bodies and prove Theorem~\ref{t:set_s_con}. Finally, in Section~\ref{sec:genvol}, we extend some of our machinery to the notion of generalized volume introduced in \cite{GHX19}.

\section{Preliminaries}
\label{sec_summary}
  Denote by $S_K(\cdot)$ the surface area measure of a convex body $K\in\conbod$. $S_K$ is a Borel measure on $\s$, defined via the formula
  \begin{align} \label{equation-surface area measure} 
S_K(A)=\mathcal{H}^{n-1}(n^{-1}_K(A)) \ \ \mathrm{for\ every\ Borel\ set } \ A \subset \s, \end{align}  
  where $n_K$ is the Gauss map of $K$. Recall that, for $y \in \partial K$, $n_K(y)$ is the outer unit normal vector to $K$ at $y$, which is uniquely defined for almost all $y \in \partial K$. As the set $N_K=\{x\in\partial K: \text{$K$ does not have a unique outer normal at $x$}\}$ is of $\mathcal H^{n - 1}$-measure zero, we continue to write $\partial K$ in place of $\partial K\setminus N_K,$ without any confusion. The \textit{Cauchy projection formula}  (see e.g., \cite[pg. 408, Eq. (A.45)]{gardner_book}) states that: for  $\theta\in\mathbb{S}^{n-1},$  
	\begin{equation}
        \label{eq:cauch}
	   \vol_{n-1}\left(P_{\theta^{\perp}}K\right) =\frac{1}{2}\int_{\s}|\langle \theta,u \rangle| dS_K(u)=\int_{\s}\langle \theta,u \rangle_- dS_K(u),
	\end{equation}
 where the last equality follows from the fact that $S_K$ has center of mass at the origin.
 
Following \eqref{equation-surface area measure}, we define the weighted surface area measure as follows. Recall that if $f: X \to Y$ is a measurable map between measurable spaces and $\mu$ is a measure on $X$, then the pushforward of $\mu$ by $f$, denoted $f_* \mu$, is the measure on $Y$ defined by $(f_* \mu)(A) = \mu(f^{-1}(A))$.

\begin{definition}
\label{def:surface_mu}
	For a convex body $K\in\conbod$ and a Borel measure $M$ on $\partial K$ which is absolutely continuous with respect to the Hausdorff measure on $\partial K$, the $M$-surface area measure of $K$ is defined as $S^{M}_K= (n_{K})_* M$, the pushforward of $M$ by $n_K$. 
 
 Writing $\phi$ for the density of $M$ with respect to $\mathcal{H}^{n - 1}$, we have
\begin{equation}
    \label{eq:surface_mu}
    S^{M}_{K}(A)=M(n_K^{-1}(A))=\int_{n_K^{-1}(A)}\phi(x)\,d\mathcal{H}^{n-1}(x)
\end{equation}
for every Borel set $A \subset \s.$ 
\end{definition} 

In particular, for $\mu\in\mathcal{M}(K)$ with density $\phi$, the measure $\phi(x)\,d\mathcal H^{n - 1}(x)$ can be used to define a surface area measure via \eqref{eq:surface_mu} which is the same as in \eqref{eq_bd}.

Let $L\subset \Rn$ be a subset containing the origin $o$ in its interior. For such an $L$, if the line segment $[0, x]\subset L$ for all $x\in L$, then $L$ is called a 
star-shaped set. The radial function of  a star-shaped set $L$ is denoted by $\rho_L:\R^n\setminus \{o\} \to \R$ and defined by $\rho_L(y)=\sup\{\tau:\tau y\in L\}.$ A compact star-shaped set with continuous radial function will be called a star body (with respect to the origin), and the set of all star bodies (with respect to the origin) is denoted by $\mathcal{S}_0^n$. In general, a set $E\subset \Rn$ is said to be star body with respect to $x$ if $E-x\in \mathcal{S}_0^n.$  Clearly, every $K\in\conbod$ is a star body with respect to every $x\in\text{int}(K)$. The \textit{roof function} for convex bodies plays important roles in the characterization of equality in our main results. For a convex body $K$ containing the origin, its roof function can be formulated by 
\begin{equation}
\aff K(x) = 
\begin{cases} 1 & \ \ \  x = o, \\ 
 0 & \ \ \  x\not\in K,  \\
\left(1-\frac{1}{\rho_K(x)}\right) &  \ \ \  x\in K\setminus\{o\}.
\end{cases}
\end{equation} We often use its equivalent form in  polar coordinates: for $\theta\in \s$ and $r\in [0,\rho_K(\theta)],$ 

\begin{equation}
\label{eq:lk}
\aff K(r\theta)=\left(1-\frac{r}{\rho_K(\theta)}\right),
\end{equation} and $\aff K(r\theta)=0
$ if $r>\rho_K(\theta).$ When $K\in\conbod_0,$ it is often more convenient to write $\aff K(x)$ as follows: 
$\aff K(x)=1-\|x\|_K$ for $x\in K$ and $0$ otherwise, where $\|\cdot\|_K$ stands for the \textit{Minkowski functional} of $K$, defined as $\|x\|_K=\rho_K(x)^{-1}.$

A function $\psi:\R^n\to\R$ is said to be a concave function if $$\psi((1-\tau)x+\tau y)  \geq (1-\tau)\psi(x)  +\tau \psi(y) $$
holds for every $x,y\in\text{supp}(\psi)$, the support of $\psi$,  and $\tau\in[0,1].$ A non-negative function $\psi$ is $s$-concave, $s>0$, if $\psi^s$ is a concave function, and is log-concave if $\log \psi$ is concave. The log-concavity can be obtained by $s$-concavity by letting $s\rightarrow 0^+$. A direct application of Jensen's inequality shows that for any $s > 0$, an $s$-concave function is also log-concave. 

 We shall need the following result which states how a log-concave function can be used to construct a convex body in $\conbod_0$. 

\begin{proposition}[Theorem 5 in \cite{Ball88} and Corollary 4.2 in \cite{GZ98}]
\label{p:radial_ball}
    Let $f$ be a log-concave function on $\Rn$. Then, for every $p> 0,$ the function on $\s$ given by 
    $$\theta\mapsto\left(p\int_0^\infty f(r\theta)r^{p-1}dr\right)^{1/p}$$
    defines the radial function of a convex body containing the origin in its interior.
\end{proposition}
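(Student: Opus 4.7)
The plan is to identify $g$ explicitly as the radial function of a star body, then exhibit that body as convex by analyzing the associated Minkowski functional. First I would extend $g: \s \to (0, \infty)$ to a positively $(-1)$-homogeneous function on $\Rn \setminus \{o\}$ by $\rho(x) := g(x/|x|)/|x|$, take $K := \{x \in \Rn : \rho(x) \geq 1\} \cup \{o\}$ as the candidate body, and note that $K$ inherits positivity and continuity from $g$, so it is automatically a star body with radial function $\rho$ and Minkowski functional $\|x\|_K := 1/\rho(x)$.

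The key reduction is to re-express $\|\cdot\|_K$ via the integral $F(x) := \int_0^\infty f(sx)\, s^{p-1}\,ds$. The substitution $r = s|x|$ in the defining integral for $g(x/|x|)^p$ yields $g(x/|x|)^p = p|x|^p F(x)$, and hence $\|x\|_K = p^{-1/p} F(x)^{-1/p}$. Since $\|\cdot\|_K$ is already positively $1$-homogeneous, convexity of $K$ is equivalent to convexity of $\|\cdot\|_K$, in turn equivalent to convexity of $F^{-1/p}$ on $\Rn \setminus \{o\}$, i.e., to $(-1/p)$-concavity of $F$. By the $(-p)$-homogeneity of $F$, this reduces to convexity of a single superlevel set: showing that $F(x_0), F(x_1) \geq 1$ and $\lambda \in (0,1)$ imply $F\bigl((1-\lambda) x_0 + \lambda x_1\bigr) \geq 1$.

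The main obstacle is this $(-1/p)$-concavity of $F$, which is classical (Ball 1988, also Gardner--Zhang 1998) but genuinely delicate. The direct approach fails: using log-concavity of $f$ with a common radial parameter $s$ gives only the pointwise bound $f(s x_\lambda) \geq f(s x_0)^{1-\lambda} f(s x_1)^\lambda$, and integrating against $s^{p-1}$ then applying H\"older's inequality produces only the one-sided estimate $F(x_\lambda) \geq \int [f(s x_0) s^{p-1}]^{1-\lambda}[f(s x_1) s^{p-1}]^\lambda\, ds$ whose right-hand side is itself bounded above by $F(x_0)^{1-\lambda} F(x_1)^\lambda$; the chain thus yields no useful information about $F(x_\lambda)$. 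To extract the stronger $(-1/p)$-concavity I would invoke the Borell--Brascamp--Lieb inequality in one dimension applied to the integrands $s \mapsto f(sx_i) s^{p-1}$, with the exponent chosen so that the BBL output is exactly $-1/p$-concavity; equivalently, following Ball, one uses a multiplicative variant of Pr\'ekopa--Leindler obtained via the substitution $s = e^\tau$, which converts the scaling action on each ray into an additive shift and makes the pointwise log-concavity of $f$ on the two-dimensional plane spanned by $x_0$ and $x_1$ sufficient to conclude.
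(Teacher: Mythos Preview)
The paper does not prove this proposition; it is quoted from Ball (1988) and Gardner--Zhang (1998) and used as a black box, so there is no ``paper's own proof'' to compare against. Your reduction is the standard one and is correct: rewrite the Minkowski functional as $\|x\|_K=p^{-1/p}F(x)^{-1/p}$ with $F(x)=\int_0^\infty f(sx)\,s^{p-1}\,ds$, use the $(-p)$-homogeneity of $F$ to reduce convexity of $\|\cdot\|_K$ to convexity of the single superlevel set $\{F\ge 1\}$, and you correctly diagnose that the naive H\"older bound points the wrong way.

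The gap is in your last sentence. Neither of the two mechanisms you name actually closes the argument as stated. After the substitution $s=e^{\tau}$, the Pr\'ekopa--Leindler hypothesis for the integrands $\tau\mapsto f(e^{\tau}x_i)e^{p\tau}$ would require
\[
f\bigl(e^{(1-\lambda)\tau_0+\lambda\tau_1}x_\lambda\bigr)\ \ge\ f(e^{\tau_0}x_0)^{1-\lambda}f(e^{\tau_1}x_1)^{\lambda}
\]
for all $\tau_0,\tau_1$, but log-concavity of $f$ only yields $f\bigl((1-\lambda)e^{\tau_0}x_0+\lambda e^{\tau_1}x_1\bigr)\ge$ the right-hand side, and in general $e^{(1-\lambda)\tau_0+\lambda\tau_1}x_\lambda\neq(1-\lambda)e^{\tau_0}x_0+\lambda e^{\tau_1}x_1$ unless $\tau_0=\tau_1$. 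Likewise, one-dimensional Borell--Brascamp--Lieb with ``the exponent chosen so that the output is $-1/p$-concavity'' does not apply, because the required pointwise power-mean condition between $s\mapsto f(sx_0)s^{p-1}$, $s\mapsto f(sx_1)s^{p-1}$ and $s\mapsto f(sx_\lambda)s^{p-1}$ fails for the same reason. Indeed $F$ need not be log-concave at all: for the Gaussian $f(z)=e^{-|z|^2}$ one gets $F(x)=C\,|x|^{-p}$, and $\log|x|$ is not concave on $\R^n\setminus\{o\}$, yet $F^{-1/p}=C^{-1/p}|x|$ is convex. Ball's actual argument normalizes to $F(x_0)=F(x_1)=1$, works in the two-plane spanned by $x_0,x_1$, and applies a one-dimensional Pr\'ekopa--Leindler with a carefully chosen reparametrization that mixes the two radial variables; you should either invoke his lemma explicitly or spell that step out, rather than leave it at the level of a substitution.

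A minor point: you assert that $K$ ``inherits positivity and continuity from $g$,'' but nothing in the bare hypothesis ``$f$ log-concave'' forces $0<g(\theta)<\infty$; one needs $f(0)>0$ and suitable decay, assumptions that the cited sources make but the statement here leaves implicit.
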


Let $K$ be a convex body in $\Rn.$ For $p>-1, p\neq 0$, 
the function   
\begin{equation}
    \rho_{R_p K}(\theta)=\left(\frac{1}{\vol_n(K)}\int_K \rho_{K-x}(\theta)^p dx\right)^\frac{1}{p}, \ \ \ \  \theta\in\s
    \label{pth} 
\end{equation} is well defined on $\s$ and defines the radial function of a star body $R_pK$, i.e., the  \textit{pth radial mean body} of $K$ introduced by  Gardner and Zhang \cite{GZ98}. By appealing to continuity in $p$, $R_0 K$ and $R_\infty K$ can be defined as well, and in fact, one has $R_\infty K=DK$ \cite{GZ98}. Note that $R_p K$ tends to $\{o\}$  as  $p\to -1$, and hence, to obtain an interesting limit at $-1$ another family of star bodies depending on $K\in\conbod$ is needed. These new star bodies are called the \textit{pth spectral mean bodies} of $K$ \cite{GZ98} and are defined as follows: the $0$th spectral mean body is $e \cdot R_0 K$ and the $p$th spectral mean body, for $p\in (-1,0)\cup (0,\infty)$, is $(p+1)^\frac{1}{p}R_p K.$ This renormalization naturally brings the polar  projection body into the new family, as one has
 $$(p+1)^\frac{1}{p}R_p K \to \vol_n(K) \Pi^\circ K\ \ \ \mathrm{as} \ \ p\to (-1)^+.$$ 
 
By using Berwald's inequality \cite{Berlem,Bor73}, Gardner and Zhang \cite[Theorem 5.5]{GZ98} obtained that, for $-1 < p \leq q < \infty,$ 
 \begin{equation}
    \label{e:set_inclusion}
    D K \subseteq \binom{n+q}{q} R_{q} K \subseteq \binom{n+p}{p} R_{p} K \subseteq n \vol_n(K) \Pi^{\circ} K,
\end{equation} with equality in each inclusion in \eqref{e:set_inclusion} if, and only if, $K$ is a $n$-dimensional simplex. Note that for any $p \ge 0$, $R_p K$ is an origin symmetric convex body (as can be easily checked by applying Proposition~\ref{p:radial_ball} to the covariogram function), however the convexity of $R_p K$  for $p\in (-1,0)$ is still unknown. Extension of the radial mean bodies themselves in different settings can be found in \cite{HLPRY23,HL22,LP23}.

Many of our results require the measure $\mu$ to have some concavity. To this end, let $F:(0,\mu(\R^n))\to (-\infty,\infty)$ be a continuous, invertible and strictly monotonic function. We say that a Borel measure $\mu$ is $F$-concave on a class $\mathcal{C}$ of compact Borel subsets of $\R^n$ if  \begin{equation}
  \label{eq:concave}
  \mu(\tau A +(1-\tau)B) \geq F^{-1}\left(\tau F(\mu(A)) +(1-\tau)F(\mu(B))\right) \end{equation} for any $A,B \in \mathcal{C}$  and  $\tau \in [0,1]$. When $\mu$ satisfies \eqref{eq:concave} for   
  $F(t)=t^s$, $ s \in\R\setminus\{0\}$,  $\mu$ is said to be a $s$-concave measure, while $\mu$ is a log-concave measure if $\mu$ satisfies \eqref{eq:concave} for $F(t)=\log t$. In particular, the Lebesgue measure $\lambda $ on $\Rn$ is a $1/n$-concave measure  on the class of compact subsets of $\R^n$, due to the Brunn-Minkowski inequality. In fact, Borell's classification of concave measures \cite{Bor75} states that a Radon measure is log-concave on Borel subsets of $\R^n$ if, and only if,  $\mu$ has a density $\phi(x)$ that is log-concave, i.e., $\phi(x)=e^{-\psi(x)},$ where $\psi:\R^n\to\R$ is convex. Similarly, a Radon measure is $s$-concave on Borel subsets of $\R^n$, $s>0,$ if, and only if, $\mu$ has a density $\phi$ that is zero almost everywhere if $s>1/n$, is constant if $s=1/n$, or is $s/(1-ns)$-concave if $s\in (0,\frac{1}{n}$).  

The following result asserts that the $\mu$-covariogram inherits the concavity of the measure $\mu$.
\begin{proposition}[Concavity of the covariogram, \cite{LRZ22}]
\label{p:covario_concave}
Consider a class of convex bodies $\mathcal{C}\subseteq\conbod$ with the property that $K\in \mathcal{C} \rightarrow K\cap(K+x)\in\mathcal{C}$ for every $x\in DK$. Let $\mu$ be a Borel measure finite on every $K\in\mathcal{C}.$ Suppose $F$ is a continuous and invertible function such that $\mu$ is $F$-concave on $\mathcal{C}$. Then, for $K\in\mathcal{C},$ $g_{\mu,K}$ is also $F$-concave, in the sense that, if $F$ is increasing, then $F\circ g_{\mu,K}$ is concave, and if $F$ is decreasing, then $F\circ g_{\mu,K}$ is convex.
\end{proposition}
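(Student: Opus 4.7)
The plan is to prove this via a Minkowski-type set inclusion followed by direct application of the $F$-concavity hypothesis. The key geometric observation is that for any $x, y \in DK$ and $\tau \in [0,1]$, one has the inclusion
\[
(1-\tau)\bigl(K \cap (K+x)\bigr) + \tau\bigl(K \cap (K+y)\bigr) \subseteq K \cap \bigl(K + (1-\tau)x + \tau y\bigr).
\]
This is verified pointwise: if $a \in K \cap (K+x)$ and $b \in K \cap (K+y)$, then $(1-\tau)a + \tau b$ lies in $K$ by convexity, and its difference with $(1-\tau)x + \tau y$ equals $(1-\tau)(a-x) + \tau(b-y)$, which also lies in $K$ by convexity.

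Next, I would apply monotonicity of $\mu$ to the above inclusion to obtain
\[
g_{\mu,K}\bigl((1-\tau)x + \tau y\bigr) \geq \mu\Bigl((1-\tau)\bigl(K \cap (K+x)\bigr) + \tau\bigl(K \cap (K+y)\bigr)\Bigr).
\]
At this point, the closure hypothesis on $\mathcal{C}$ enters crucially: since $K \in \mathcal{C}$ forces $K \cap (K+x), K \cap (K+y) \in \mathcal{C}$, the $F$-concavity of $\mu$ on $\mathcal{C}$ applies to the right-hand side, giving
\[
\mu\Bigl((1-\tau)\bigl(K \cap (K+x)\bigr) + \tau\bigl(K \cap (K+y)\bigr)\Bigr) \geq F^{-1}\!\bigl((1-\tau)F(g_{\mu,K}(x)) + \tau F(g_{\mu,K}(y))\bigr).
\]

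Finally, I would split into the two monotonicity cases for $F$. If $F$ is increasing, applying $F$ to both sides of the chain preserves the inequality and yields concavity of $F \circ g_{\mu,K}$. If $F$ is decreasing, applying $F$ reverses the inequality and yields convexity of $F \circ g_{\mu,K}$, as required.

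The proof is conceptually very short; the only genuine subtlety is recognizing that the hypothesis on $\mathcal{C}$ being closed under the operation $K \mapsto K \cap (K+x)$ is exactly what allows one to invoke the $F$-concavity of $\mu$ on the intermediate sets. Verifying the set inclusion and bookkeeping the direction of inequality under $F$ are routine.
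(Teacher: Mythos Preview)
Your proposal is correct and follows essentially the same route as the paper. Although the paper cites this particular proposition from \cite{LRZ22} without proof, it does prove the $m$th-order generalization (Lemma~\ref{l:higher_covario}) by exactly the same argument: establish the set inclusion $(1-t)[K\cap(K+x)] + t[K\cap(K+y)] \subseteq K\cap(K+(1-t)x+ty)$ pointwise via convexity, then apply monotonicity of $\mu$ and the $F$-concavity hypothesis on $\mathcal{C}$, and finally sort out the direction of the inequality according to whether $F$ is increasing or decreasing.
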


We shall need the following result regarding some properties of concave functions.
\begin{lemma}[Lemma 2.4, \cite{LRZ22}]
	\label{l:concave}
	Let $f$ be a concave function that is supported on  $L\in\conbod_0$ such that
	$$
h(\theta) := \frac{df(r\theta)}{dr}\bigg|_{r=0^+} < 0, \quad  \text{for all }  \theta\in \s,
$$ 
and $f(o) > 0.$ Define $z(\theta)=-\left(h(\theta)\right)^{-1}f(o).$ Then,
\begin{equation}\label{eq:concave_function} -\infty < f(r\theta)\leq f(o)\left[1-(z(\theta))^{-1}r\right]\end{equation}
whenever $\theta\in\s$ and $r\in[0,\rho_L(\theta)]$. In particular, if $f$ is non-negative, then we have
$$0 \leq f(r\theta)\leq f(o)\left[1-(z(\theta))^{-1}r\right] \quad  \mbox{and }  \ \  \ \rho_L(\theta)\leq z(\theta).
$$
In this case, $ f(r\theta)=f(o)\left[1-(z(\theta))^{-1}r\right]$ for $r\in[0,\rho_L(\theta)]$ if, and only if, $\rho_L(\theta)=z(\theta).$
\end{lemma}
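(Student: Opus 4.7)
The plan is to reduce everything to a one-dimensional analysis of the concave function $F(r) := f(r\theta)$ on $[0, T]$, where $T := \rho_L(\theta)$, and then invoke two elementary facts about concave functions on an interval: (i) a concave function lies below each of its tangent lines, and (ii) it lies above each of its chords. Fix $\theta \in \s$. Since the segment $[0, T\theta]$ lies in $L$ and $f$ is concave on $L$, the restriction $F$ is concave on $[0, T]$, with $F(0) = f(o) > 0$ and right derivative $F'_+(0) = h(\theta) < 0$.

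The upper bound is immediate from (i) applied at $r = 0$: for $r \in [0, T]$,
$$F(r) \leq F(0) + h(\theta)\, r = f(o)\left(1 - \frac{r}{z(\theta)}\right),$$
by definition of $z(\theta) = -f(o)/h(\theta)$. The finiteness $F(r) > -\infty$ follows from the hypothesis that $f$ is supported on $L$, so the effective domain of $f$ contains the segment $[0, T\theta]$. For the non-negative case, we combine $F \geq 0$ with the upper bound at $r = T$ to get $0 \leq f(o)(1 - T/z(\theta))$, and since $f(o) > 0$ this forces $\rho_L(\theta) \leq z(\theta)$.

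For the equality characterization, assume first that $T = z(\theta)$. Then the upper bound at $r = T$ becomes $F(T) \leq 0$; combined with $F(T) \geq 0$, this gives $F(T) = 0$. Now (ii), applied to the chord of $F$ through the endpoints $(0, f(o))$ and $(T, 0)$, yields $F(r) \geq (1 - r/T)\, f(o) = f(o)(1 - r/z(\theta))$, which matches the upper bound and gives equality throughout $[0, T]$. Conversely, suppose equality holds on $[0, T]$; then $F(T) = f(o)(1 - T/z(\theta))$. Since $f$ is supported on $L$ and $T\theta \in \partial L$, the boundary vanishing of $f$ there gives $F(T) = 0$, so that $1 - T/z(\theta) = 0$, i.e., $T = z(\theta)$.

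The main subtlety is precisely the converse step of the equality characterization: it hinges on $f$ vanishing at $T\theta \in \partial L$. This is automatic in the intended applications (covariogram functions, which are continuous and vanish on $\partial L$), and is most naturally built into the phrase ``supported on $L$'' by viewing $f$ as concave on $\R^n$ and zero outside $L$, where the extension would be discontinuous unless $f$ vanishes on $\partial L$.
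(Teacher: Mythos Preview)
The paper does not give its own proof of this lemma; it is quoted verbatim from \cite{LRZ22} and used as a tool. Your argument is the standard one and is correct: the tangent-line inequality for the concave function $r\mapsto f(r\theta)$ at $r=0$ gives the upper bound, non-negativity at $r=\rho_L(\theta)$ yields $\rho_L(\theta)\le z(\theta)$, and the chord inequality supplies the forward direction of the equality case. You are also right to isolate the one delicate point, namely that the converse direction of the equality characterization needs $f(\rho_L(\theta)\theta)=0$; this is precisely how the lemma is applied in the present paper (to $F\circ g_{\mu,\hid}(K,\cdot)$, which is continuous on $\R^{n\hid}$ and vanishes on $\partial D^\hid(K)$). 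One small caveat on your closing justification: a function that is genuinely concave on all of $\R^n$, non-negative, and compactly supported must be identically zero, so ``$f$ concave on $\R^n$ and zero outside $L$'' cannot be taken literally; the intended reading is that $f$ is concave on $L$ and extended continuously by $0$ outside, which does force $f|_{\partial L}=0$ and makes your converse argument go through.
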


Recall that $\mathcal{M}(K)$ is the set of Borel measures $\mu$ on $\Rn$ such that $\mu$ has locally integrable density $\phi \colon \R^n \to \R^+$ containing $K$ in its support and $\partial K$ in its Lebesgue set. Let $K\in\conbod$ and $\mu\in  \mathcal{M}(K)$ be $F$-concave, where $F$ is a non-negative, differentiable, strictly increasing function. It follows from \eqref{covario}, \eqref{e:deriv_g_mu_covario} and Proposition \ref{p:covario_concave} that $\supp(F \circ g_{\mu, K}) = DK$,  $F \circ g_{\mu, K}$ is concave and  
\begin{align*}
    \frac{d}{dr} (F \circ g_{\mu, K})(r\theta)\bigg|_{r=0^+}  &= F'(g_{\mu, K}(o)) \frac{d}{dr} g_{\mu, K}(r\theta)\bigg|_{r=0^+}  
     \\
     &= -F'(\mu(K)) h_{\Pi_\mu K}(\theta)\ \ \ \mathrm{for\ \ any\ } \theta \in \s.
\end{align*} Moreover, applying Lemma \ref{l:concave} to $f(r\theta)=(F \circ g_{\mu, K})(r\theta)$ and $L=DK$, one has 
\begin{equation}
    \rho_{DK}(\theta) \le \frac{F(\mu(K))}{F'(\mu(K)} h_{\Pi_\mu K}(\theta)^{-1} = \frac{F(\mu(K))}{F'(\mu(K)} \rho_{\Pi_\mu^\circ K}(\theta). 
\end{equation} This yields
\begin{equation}\label{e:mu_set_incl}
DK\subseteq \frac{F(\mu(K))}{F^\prime(\mu(K))} \Pi^\circ_{\mu}K.
\end{equation} 

\section{A Higher-Order Weighted Projection Body Operator}
\label{sec:higher_dim}

In this section, we will introduce the $m$th-order weighted projection body operator and prove some of its properties.  
We first establish the concavity of $g_{\mu,\hid}(K,\cdot)$ defined in Definition~\ref{eq_higher_co}, namely,  
\[
g_{\mu,\hid}(K, \bar x) = g_{\mu,\hid}(K,(x_1,\dots,x_{\hid})) = \int_{K} \left(\prod_{i=1}^{\hid} \chi_{K}(y-x_i)\right)\,d\mu(y).
\]

\begin{lemma}
\label{l:higher_covario} Let $\hid \in \N$, $K \in \conbod$, and $F$ be a positive, continuous and invertible function. Let $\mu$ be a Borel measure on $\R^n$ such that $\mu$ is $F$-concave on a class of convex bodies $\mathcal{C}\subseteq\conbod$ with the property that $K\in \mathcal{C}$ implies $K\cap_{i=1}^\hid (K+x_i)\in\mathcal{C}$ for every $\bar x=(x_1,\dots,x_\hid)\in D^\hid (K)$. Then, for $K\in\mathcal{C},$ $g_{\mu,\hid}(K,\cdot)$ is also $F$-concave, in the sense that, if $F$ is increasing, then $F\circ g_{\mu,\hid}(K,\cdot)$ is concave, and if $F$ is decreasing, then $F\circ g_{\mu,\hid}(K,\cdot)$ is convex.
\end{lemma}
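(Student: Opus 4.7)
\emph{Proof plan.} The strategy parallels that of Proposition~\ref{p:covario_concave} (the $m=1$ case): I will reduce the concavity of $g_{\mu,\hid}(K,\cdot)$ to an $F$-concavity inequality for $\mu$ applied to two bodies in $\mathcal{C}$, using a one-line set inclusion that upgrades the convexity of $K$ to a Minkowski statement for $\hid$-fold intersections. The crucial geometric observation is that for all $\bar x, \bar y \in D^\hid(K)$ and $\tau \in [0,1]$,
\[
\tau\Bigl(K \cap_{i=1}^\hid (K+x_i)\Bigr) + (1-\tau)\Bigl(K \cap_{i=1}^\hid (K+y_i)\Bigr) \subseteq K \cap_{i=1}^\hid \bigl(K + \tau x_i + (1-\tau) y_i\bigr).
\]
I would verify this pointwise: if $a$ lies in the first intersection and $b$ in the second, then $\tau a + (1-\tau) b \in K$ by convexity of $K$, while for each $i$ the point $\tau(a-x_i) + (1-\tau)(b-y_i)$ lies in $K$ as well, which places $\tau a + (1-\tau) b$ into $K + \tau x_i + (1-\tau) y_i$.

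Setting $A = K \cap_{i=1}^\hid (K+x_i)$ and $B = K \cap_{i=1}^\hid (K+y_i)$, the hypothesis on the class $\mathcal{C}$ guarantees $A, B \in \mathcal{C}$, so the $F$-concavity of $\mu$ on $\mathcal{C}$ applies to the Minkowski combination $\tau A + (1-\tau) B$. Combining monotonicity of $\mu$ with the inclusion above yields
\[
g_{\mu,\hid}(K, \tau \bar x + (1-\tau) \bar y) \;\geq\; \mu(\tau A + (1-\tau) B) \;\geq\; F^{-1}\bigl(\tau F(\mu(A)) + (1-\tau) F(\mu(B))\bigr),
\]
where $\mu(A) = g_{\mu,\hid}(K, \bar x)$ and $\mu(B) = g_{\mu,\hid}(K, \bar y)$ by \eqref{eq_vol_ell_co-mu}.

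Finally, I would apply $F$ to both sides of this inequality, treating the two monotonicity regimes separately. If $F$ is increasing, $F$ preserves the inequality direction and the resulting estimate is exactly the statement that $F \circ g_{\mu,\hid}(K, \cdot)$ is concave; if $F$ is decreasing, $F$ reverses the inequality and we obtain convexity of $F \circ g_{\mu,\hid}(K,\cdot)$. There is no substantial obstacle in this argument; the only step requiring genuine work is the set inclusion, which is a direct unwinding of the definitions, and the bookkeeping around the direction of the $F$-concavity inequality when $F$ is decreasing. The hypothesis that $\mathcal{C}$ is closed under the operation $K \mapsto K \cap_{i=1}^\hid (K+x_i)$ is precisely what is needed to apply the $F$-concavity of $\mu$ at the relevant bodies $A$ and $B$.
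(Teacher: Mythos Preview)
Your proposal is correct and follows essentially the same approach as the paper's proof: the same set inclusion $\tau A + (1-\tau)B \subseteq K \cap_{i=1}^\hid (K + \tau x_i + (1-\tau)y_i)$ is established pointwise via convexity of $K$, then combined with the $F$-concavity of $\mu$ on $\mathcal{C}$ and the monotonicity of $F$ to conclude. The only cosmetic difference is that the paper applies $F$ first and then invokes $F$-concavity in the form $F(\mu(\tau A + (1-\tau)B)) \geq \tau F(\mu(A)) + (1-\tau) F(\mu(B))$, whereas you keep the $F^{-1}$ on the right and apply $F$ at the end; the content is identical.
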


\begin{proof} For any $t \in [0,1]$,
$\overline{x}= (x_1,\dots,x_{\hid})\in D^\hid(K)$ and $\overline{y} = (y_1,\dots,y_{\hid})\in D^\hid(K)$, let 
\begin{align}\label{def-K^t-1}
K^t(\overline{x}, \overline{y}) = K \cap \left[ \bigcap_{i=1}^{\hid} ((1-t)x_i+ty_i + K) \right].
\end{align} The desired result will follow once the following is verified:  
\begin{equation}\label{eq:kt_supset}
K^t(\overline{x}, \overline{y}) \supseteq (1-t)  \left[K \cap \left(\bigcap_{i=1}^{\hid} (x_i +K)\right) \right] + t \left[K \cap \left(\bigcap_{i=1}^{\hid} (y_i +K)\right) \right],
\end{equation} 
 Indeed, suppose $F$ is increasing. This set inclusion, together with \eqref{eq:concave},  then yields
\begin{align*}&F(g_{\mu, m}(K, (1 - t)\bar x + t\bar y))= F(\mu(K^t(\overline{x}, \overline{y}))) \\
&\ge  F\left(\mu\left((1-t)  \left[K \cap \left(\bigcap_{i=1}^{\hid} (x_i +K)\right) \right] + t \left[K \cap \left(\bigcap_{i=1}^{\hid} (y_i +K)\right)\right]\right)\right) \\
&\ge (1 - t) F\left(\mu\left(K \cap \left(\bigcap_{i=1}^{\hid} (x_i +K)\right)\right)\right) + t F\left(\mu\left(K \cap \left(\bigcap_{i=1}^{\hid} (y_i +K)\right)\right)\right) \\
&= (1 - t) F(g_{\mu, m}(K, \bar x)) + t F(g_{\mu, m}(K, \bar y)),
\end{align*}
where the third line uses the $F$-concavity of $\mu$. This shows that $F\circ g_{\mu,\hid}(K,\cdot)$ is concave. Similar computations show that if $F$ is decreasing, then $F\circ g_{\mu,\hid}(K,\cdot)$ is convex.

Now we show that \eqref{eq:kt_supset} holds. To this end, let $$\Bar{z} \in (1-t)  \left[K \cap \left(\bigcap_{i=1}^{\hid} (x_i +K)\right) \right] + t \left[K \cap \left(\bigcap_{i=1}^{\hid} (y_i +K)\right) \right].$$ Then $\Bar{z} = (1-t) z+tz'$, with 
\[
z \in K \cap \left(\bigcap_{i=1}^{\hid} (x_i +K)\right) \text{ and } z' \in K \cap \left(\bigcap_{i=1}^{\hid} (y_i +K)\right).
\]
By the convexity of $K$, we see that $\Bar{z} \in K$. For each $i =1,\dots,\hid$, there exist $\tilde{z}_i,\tilde{z}_i' \in K$ such that $z=x_i+\tilde{z}_i$ and $z'=y_i+\tilde{z}_i'$, which means that $$\Bar{z} = (1-t)x_i +t y_i + ((1-t)\tilde{z}_i+t\tilde{z}_i')\in (1-t)x_i +t y_i +K$$ holds for every $i = 1,\dots,\hid$. It then follows that $\Bar{z} \in K^t(\bar{x},\Bar{y}),$ as required. 
\end{proof}

We now introduce our main tool in proving Theorem~\ref{t:variationalformula}, Aleksandrov's variational formula for arbitrary measures. For a continuous function $h: \s \to (0, \infty)$, the Wulff shape or Alexandrov body of $h$ is defined as 
\begin{align} \label{wullf-1-1} [h] = \bigcap_{u \in \s} \{x \in \mathbb R^n: \langle x, u\rangle \le h(u)\}. \end{align}

\begin{lemma}[Aleksandrov's variational formula for arbitrary measures, Lemma 2.7 in \cite{KL23}]
	\label{l:second}
Let $K$ be a convex body, let $\mu\in\mathcal{M}(K)$, and let $f$ be a continuous function on $\s$. Then
	$$\lim_{t\rightarrow 0}\frac{\mu([h_K+tf])-\mu(K)}{t}=\int_{\s}f(\theta)\, dS^{\mu}_K(\theta).$$
	\end{lemma}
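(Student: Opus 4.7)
The plan is to compute $\mu([h_K+tf]) - \mu(K)$ by interpreting the symmetric difference as a thin ``shell'' around $\partial K$ parametrized by the outward normal, and then passing a Lebesgue-differentiation-type limit inside the integral. Writing $K_t := [h_K+tf]$, I first split $f = f_+ - f_-$ and invoke linearity in $f$ of the target integral $\int_{\s} f\, dS^{\mu}_K$, reducing to the case $f \geq 0$. Then, for small $t \geq 0$, one has the monotone inclusion $K \subseteq K_t$ and
\[
\mu(K_t) - \mu(K) = \int_{K_t \setminus K} \phi(x)\, d\lambda(x).
\]

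Next, I parametrize the shell via $\Psi(y, s) := y + s\, n_K(y)$ for $(y,s) \in \partial K \times [0,\infty)$. The Gauss map $n_K$ is $\mathcal{H}^{n-1}$-a.e.\ defined on $\partial K$, and $\Psi$ is injective on this full-measure set because the outward normal rays of a convex body do not cross. By the Wulff-shape definition \eqref{wullf-1-1}, the point $y + s\, n_K(y)$ lies in $K_t$ iff $\langle y + s n_K(y), u\rangle \leq h_K(u) + t f(u)$ for all $u \in \s$; the choice $u = n_K(y)$ gives $s \leq t f(n_K(y))$, and the uniform continuity of $f$ together with the uniform convergence $h_{K_t} \to h_K$ shows this is the sole binding constraint for $s = O(t)$. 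Hence the shell is, up to a remainder of $\mu$-measure $o(t)$, the image under $\Psi$ of $\{(y,s) : 0 \leq s \leq tf(n_K(y))\}$.

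Applying the area formula to $\Psi$, whose Jacobian is $1 + O(s)$ at $s=0$, yields
\[
\int_{K_t \setminus K} \phi \, d\lambda = \int_{\partial K} \int_0^{t f(n_K(y))} \phi(y + s\, n_K(y))\bigl(1 + O(s)\bigr)\, ds\, d\mathcal{H}^{n-1}(y) + o(t).
\]
Dividing by $t$, the inner integral $\frac{1}{t}\int_0^{tf(n_K(y))} \phi(y + s n_K(y))(1 + O(s))\, ds$ converges as $t \to 0^+$ to $f(n_K(y))\phi(y)$ for $\mathcal{H}^{n-1}$-a.e.\ $y \in \partial K$, precisely because $\partial K$ lies in the Lebesgue set of $\phi$ (this is the hypothesis $\mu \in \mathcal{M}(K)$, which is exactly calibrated for this step). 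A dominated convergence argument, with dominator $\|f\|_\infty$ times a Hardy–Littlewood maximal function of $\phi$ on a fixed tubular neighborhood of $\partial K$, then gives
\[
\lim_{t \to 0^+} \frac{\mu(K_t) - \mu(K)}{t} = \int_{\partial K} f(n_K(y))\phi(y)\, d\mathcal{H}^{n-1}(y) = \int_{\s} f(\theta)\, dS^{\mu}_K(\theta),
\]
where the second equality is just the definition $S^{\mu}_K = (n_K)_*(\phi\, \mathcal{H}^{n-1}\llcorner \partial K)$ from \eqref{eq:surface_mu}. The same analysis with $t < 0$, where $K_t \subseteq K$ and the roles of $K$ and $K_t$ are exchanged, yields the two-sided limit.

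The main obstacle is handling the ``non-smooth'' portion of $\partial K$: on the $\mathcal{H}^{n-1}$-null set of boundary points where the normal cone is higher-dimensional, the parametrization $\Psi$ misses parts of the shell. These missing parts lie in $\Psi^{\text{cone}}(y, s) := y + s\, N_K(y)$ where $N_K(y)$ is a lower-dimensional subset of $\s$, so their contribution to $\mu(K_t \setminus K)$ is of order $t^{1+\alpha}$ with $\alpha > 0$ by a direct volume estimate (the relevant ``wedge'' has thickness $O(t)$ in the normal directions but its projection onto $\partial K$ has $\mathcal{H}^{n-1}$-measure zero). Hence these contributions vanish after dividing by $t$, and the formula is unaffected.
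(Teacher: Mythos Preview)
The paper does not prove this lemma; it is quoted from \cite{KL23} as a ready-made tool (note the attribution in the lemma heading), so there is no in-paper argument to compare against. Your sketch follows the standard ``thin shell plus coarea'' route to such variational formulas and is broadly right in outline, but one step as written does not go through.

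The reduction ``split $f = f_+ - f_-$ and invoke linearity \ldots\ reducing to $f \ge 0$'' is invalid: the right-hand side $\int f\,dS^\mu_K$ is linear in $f$, but the left-hand side $f \mapsto \lim_{t\to 0} t^{-1}\bigl(\mu([h_K + tf]) - \mu(K)\bigr)$ is \emph{not} a priori additive, because the Wulff shape is a nonlinear operation and there is no identity relating $[h_K + t(f_+ - f_-)]$ to $[h_K + tf_+]$ and $[h_K - tf_-]$. Proving the formula for $f_+$ and $f_-$ separately therefore does not give it for $f$. The fix is to drop the reduction and run the shell argument for signed $f$ directly, partitioning $\partial K$ according to the sign of $f\circ n_K$ and treating the two pieces of the symmetric difference $K_t \,\triangle\, K$ with the appropriate sign. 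A smaller issue: your pointwise claim that the inner normal-line average $\tfrac{1}{t}\int_0^{tf(n_K(y))}\phi(y + s\, n_K(y))\,ds$ converges to $f(n_K(y))\phi(y)$ does not follow from ``$y$ is a Lebesgue point of $\phi$,'' since Lebesgue points control $n$-dimensional ball averages, not one-dimensional line averages along the normal; what the hypothesis does control is the full shell integral $\tfrac{1}{t}\int_{K_t \triangle K}\phi$, and that is the object one should pass to the limit in directly rather than arguing pointwise and then invoking dominated convergence.
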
	
 

This extends the result of \cite{GHX19}, which proves the same formula under the assumption that $K$ has the origin in its interior and that $\mu$ has continuous density.  We now prove Theorem \ref{t:variationalformula}, which we restate here for convenience. 

\vskip 2mm \noindent {\bf Theorem \ref{t:variationalformula}.} {\em Let $K\in\conbod$, $\hid \in \N$, and $\phi$ be the density of a Borel measure $\mu\in\mathcal{M}(K)$. For every direction $\bar\theta = (\theta_1,\dots,\theta_{\hid}) \in \S$, one has 
\[
\frac{d}{dr} \left[g_{\mu,\hid}(K,r\bar\theta) \right]\bigg|_{r=0^+}
= -\int_{\partial K} \max_{1 \leq i \leq \hid} \langle \theta_i, n_K(y) \rangle_{-} \phi(y)\,dy.\] 
 }

\begin{proof} It can easily be checked that  $h_{K + r\theta}(u) = h_K(u) + r\langle u, \theta\rangle$ for any $r\geq 0$ and for any $\theta \in \mathbb R^n$. Moreover, any convex body $L$ is the Wulff shape of its support function $h_L$, i.e., $L = \bigcap_{u \in \s} \{x: \langle u, x\rangle \le h_L(u)\}$. For notational convenience,   let $\theta_0 = 0$ and $K_r = K \cap (K + r \theta_1) \cap \cdots \cap (K + r \theta_i)$. Then 
 \begin{align*}
K_r &= \bigcap_{i = 0}^\hid \bigcap_{u \in \s} \{x: \langle u, x\rangle \le h_{K + r\theta_i }(u)\} \\
&= \bigcap_{u \in \s} \bigcap_{i = 0}^\hid \{x: \langle u, x\rangle \le h_{K + r\theta_i }(u)\} \\
&= \bigcap_{u \in \s} \{x:  \langle u, x\rangle \le \min_{0\le i \le \hid} (h_{K}(u) + r\langle \theta_i, u\rangle)\} \\
&= \bigcap_{u \in \s} \{x:  \langle u, x\rangle \le h_K(u) + r\min_{0\le i \le \hid} \langle u, \theta_i\rangle\}\\ &=[h_K(u) + r \min_{0\le i \le \hid} \langle u, \theta_i\rangle],
\end{align*} where the last equality follows from \eqref{wullf-1-1}. It follows from \eqref{eq_vol_ell_co-mu} that $$g_{\mu,\hid}(K,r\bar\theta) =\mu(K_r).$$ Applying Lemma~\ref{l:second} to 
$$f(u) = \min_{0 \le i \le \hid} \langle u, \theta_i\rangle = \min_{1 \le i \le \hid} (-\langle u, \theta_i\rangle_-)=- \max_{1 \le i \le \hid} \langle u, \theta_i\rangle_-,$$ one gets, with the help of Definition \ref{def:surface_mu},
\begin{align*}
    \frac{d}{dr} \left[g_{\mu,\hid}(K,r\bar\theta) \right]\bigg|_{r=0^+} &= -\int_{\s} \max_{1 \le i \le \hid} \langle u, \theta_i\rangle_-\,dS^{\mu}_K(u) 
    \\
    &=-\int_{\partial K} \max_{1 \le i \le \hid} \langle n_K(x), \theta_i\rangle_-\,\phi(x)\,d\mathcal H^{n - 1}(x).
\end{align*} This completes the proof. 
\end{proof}

Henceforth we will suppose that $\phi$, the density of $\mu$, is strictly positive on $K$. Thus, for each $\bar{\theta}\in \mathbb{S}^{mn-1},$ \begin{align}\label{positive-derivative-1} \frac{d}{dr} \left[g_{\mu,\hid}(K,r\bar\theta) \right]\bigg|_{r=0^+}
= -\int_{\partial K} \max_{1 \leq i \leq \hid} \langle \theta_i, n_K(y) \rangle_{-} \phi(y)\,dy<0.\end{align}
Following the argument leading to \eqref{e:mu_set_incl}, by using Lemma~\ref{l:concave} and Lemma~\ref{l:higher_covario}, it can be checked that \begin{equation}\label{e:mu_set_incl_high}
D^\hid(K)\subseteq \frac{F(\mu(K))}{F^\prime(\mu(K))} \Pi_{\mu}^{\circ,\hid} K,
\end{equation} where $F$ is a non-negative, differentiable and strictly increasing function, $K\in\conbod$ is a convex body, and $\mu\in \mathcal{M}(K)$ is an $F$-concave Borel measure.

 For an invertible linear map $T: \Rn \to \Rn$, we define $\overline{T}: \R^{mn}\to \R^{mn}$ by $\overline{T}(\bar x)= (T(x_1),\dots,T(x_n))$ where $\bar x=(x_1, \dots, x_m)\in \Rnhi.$ Note that $\overline{T}$ is an invertible linear map on $\R^{mn}.$  Denote by $|\det T|$ the absolute value of the determinant of $T$. For a Borel measure $\mu,$ define  $$\mu^T = |\det T|^{-1} (T^{-1})_* \mu$$ where $(T^{-1})_* \mu$ is the pushforward of $\mu$ under $T^{-1}$. One verifies that $\mu^T$ is absolutely continuous with respect to the Lebesgue measure $\lambda$ and satisfies $d\mu^T(x) = \phi(Tx)\,d\lambda(x).$

 We now determine the behavior of $\P_\mu  K$ under linear transformations. 
 
\begin{proposition}
\label{prop:linear:transformations}
    Let $T$ be an invertible linear map on $\Rn$, $K\in\conbod$, and $\mu\in \mathcal{M}(K)$.   For $\hid \in \mathbb{N}$, one has
    \begin{equation}
        \P_\mu(TK) = |\det T| \cdot \overline{T^{-t}}\Pi^\hid_{\mu^T}K,
    \end{equation}
    where $|\det T|$ is the absolute value of the determinant of $T$. \end{proposition}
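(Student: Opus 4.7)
The plan is to verify the identity by comparing support functions of the two $nm$-dimensional convex bodies at an arbitrary $\bar x=(x_1,\dots,x_m)\in\R^{nm}$. Because $\overline{T^{-t}}$ acts on $\R^{nm}$ coordinatewise as $T^{-t}$ on each $\R^n$-factor, its transpose as a linear map on $\R^{nm}$ is $\overline{T^{-1}}$; hence $h_{\overline{T^{-t}}\Pi^m_{\mu^T}K}(\bar x)=h_{\Pi^m_{\mu^T}K}(\overline{T^{-1}}\bar x)$. Thus the theorem reduces to proving
\[
h_{\Pi^{m}_{\mu}(TK)}(\bar x)=|\det T|\cdot h_{\Pi^{m}_{\mu^T} K}\bigl(\overline{T^{-1}}\bar x\bigr).
\]

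To handle the left-hand side, I would unfold Definition \ref{def:meas_bodies} into its boundary-integral form: $h_{\Pi^m_\mu (TK)}(\bar x)=\int_{\partial(TK)}\max_{1\le i\le m}\langle x_i,n_{TK}(y)\rangle_-\,\phi(y)\,d\mathcal H^{n-1}(y)$. I then change variables $y=Tz$ with $z\in\partial K$, using the two standard facts: (i) the outer unit normal transforms as $n_{TK}(Tz)=T^{-t}n_K(z)/|T^{-t}n_K(z)|$ for $\mathcal H^{n-1}$-a.e.\ $z\in\partial K$; and (ii) the surface-area element satisfies $d\mathcal H^{n-1}(Tz)=|\det T|\,|T^{-t}n_K(z)|\,d\mathcal H^{n-1}(z)$ (the area formula applied to $T|_{\partial K}$). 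Substituting yields
\[
h_{\Pi^m_\mu(TK)}(\bar x)=|\det T|\int_{\partial K}\max_{1\le i\le m}\Bigl\langle x_i,\frac{T^{-t}n_K(z)}{|T^{-t}n_K(z)|}\Bigr\rangle_{-}\phi(Tz)\,|T^{-t}n_K(z)|\,d\mathcal H^{n-1}(z).
\]
The positive $1$-homogeneity of $a\mapsto a_-$ absorbs the normalising factor $|T^{-t}n_K(z)|$, and then the identity $\langle x_i,T^{-t}n_K(z)\rangle=\langle T^{-1}x_i,n_K(z)\rangle$ lets me rewrite the integrand as $\max_i\langle T^{-1}x_i,n_K(z)\rangle_-\,\phi(Tz)$.

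Finally, the density $\phi\circ T$ is precisely the density of $\mu^T$ with respect to Lebesgue measure: indeed, the change of variables for $(T^{-1})_*\mu$ combined with the $|\det T|^{-1}$ normalisation in the definition of $\mu^T$ gives $d\mu^T(z)=\phi(Tz)\,d\lambda(z)$. Substituting this back and invoking Definition \ref{def:meas_bodies} once more identifies the remaining integral as $h_{\Pi^m_{\mu^T}K}(\overline{T^{-1}}\bar x)$, completing the comparison. I expect the only subtle step to be the boundary change-of-variables formula: one must justify the pointwise transformation of $n_{TK}$ and the Jacobian $|\det T|\,|T^{-t}n_K|$ for $\mathcal H^{n-1}$-a.e.\ boundary point, which is where the hypothesis that $\partial K$ lies in the Lebesgue set of $\phi$ (through $\mu\in\mathcal M(K)$) ensures the integral is well-defined and the representative $\phi$ on $\partial(TK)$ pulls back correctly to $\phi\circ T$ on $\partial K$.
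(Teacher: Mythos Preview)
Your proof is correct, but it proceeds along a different route from the paper's. The paper avoids the boundary change of variables entirely by invoking Theorem~\ref{t:variationalformula}: it writes $h_{\Pi^m_\mu(TK)}(\bar\theta)$ as minus the radial derivative at $r=0^+$ of the $m$th-order $\mu$-covariogram of $TK$, rewrites $(TK)\cap_{i=1}^m(TK+t\theta_i)=T\bigl(K\cap_{i=1}^m(K+tT^{-1}\theta_i)\bigr)$, and uses $\mu(TA)=|\det T|\,\mu^T(A)$ to pull out the factor $|\det T|$ and recognise the remaining derivative as $h_{\Pi^m_{\mu^T}K}(\overline{T^{-1}}\bar\theta)$. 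Your approach instead works directly with the boundary-integral form of Definition~\ref{def:meas_bodies} and carries out the explicit change of variables $y=Tz$ on $\partial K$, using the standard transformation rules for the outer unit normal and the surface element under a linear map. The paper's method is slicker in that it bypasses the surface-measure Jacobian computation, at the cost of relying on the variational formula; your method is more self-contained and makes the geometric factors explicit, which has the merit of not depending on Theorem~\ref{t:variationalformula} (or on Lemma~\ref{l:second}) at all.
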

\begin{proof}

    Apply Theorem~\ref{t:variationalformula} to $TK$ to obtain that, for $\bar{\theta}=(\theta_1, \dots, \theta_m)\in \mathbb{S}^{mn-1}$, 
    \begin{align*}h_{\P_\mu  TK}(\bar{\theta})&=-\lim_{t\to 0^+}\frac{\mu\left((TK)\cap_{i=1}^\hid (TK+t\theta_i)\right)-\mu(TK)}{t}
    \\
    &=-\lim_{t\to 0^+}\frac{\mu\left(T\left(K\cap_{i=1}^\hid (K+tT^{-1}\theta_i)\right)\right)-\mu(TK)}{t}
    \\
    &=-|\det T|\lim_{t\to 0^+}\frac{\mu^T\left(K\cap_{i=1}^\hid (K+tT^{-1}\theta_i)\right)-\mu^T(K)}{t}
    \\
    &=|\det T|\ h_{\P_{\mu^T}  K}(\overline{T^{-1}}(\bar{\theta}))=h_{|\det T|\P_\mu  K}(\overline{T^{-1}}(\bar{\theta}))
    \\
    &=h_{\overline{T^{-t}}|\det T|\P_{\mu^T}  K}(\bar{\theta}),
    \end{align*}
    and the claim follows.
\end{proof}

Recall that a function is said to be radially non-decreasing if for every $t\in [0,1]$ and $x\in\Rn$, one has $\varphi(tx) \leq \varphi(x)$. We shall need the following chord integral inequality. We recall that the support of a function is the set
$$\text{supp}(f)=\{x\in\R^n:f(x)\neq 0\}.$$

\begin{lemma}[Lemma 3.2 from \cite{LRZ22}]\label{t:chak}
Let $\mu$ be a Borel measure on $\Rn$ with radially non-decreasing, locally integrable density $\phi$ (with respect to the Lebesgue measure $\lambda$), and let $f:\R^n\to\R^{+}$ be a compactly supported, concave function such that $o\in \text{int}(\supp(f))$ and $f(o)=\max f(x).$ Set $$\Omega_f:=\left\{\theta\in\s: \frac{df(r \theta)}{dr}\bigg|_{r=0^+}=0\right\}.$$
If $q \colon \R^{+} \to \R$ is an increasing function, then
\begin{equation*}
\begin{split}
\int_{\supp(f)} q(f(x))\, d\mu(x) &\leq \beta \int_{\s\setminus{\Omega_f}}\int_0^{z(\theta)}\phi(r\theta)r^{n-1}\,dr\,d\theta 
\\
&\quad+\int_{\Omega_f}\int_0^{\rho_{\supp(f)}(\theta)}q(f(o))\phi(r\theta)r^{n-1}\,dr\,d\theta,
\end{split}
\end{equation*}
where 
$$
z(\theta)=-\left(\frac{df(r\theta)}{dr}\bigg|_{r=0^+}\right)^{-1}f(o) \quad \text{and } \quad \beta =n\int_0^1 q(f(o)t) (1-t)^{n-1} dt.$$
Equality occurs if, and only if: 
\begin{enumerate}
    \item for $\theta\in\s\setminus\Omega_f$, one has $z(\theta)=\rho_{\supp(f)}(\theta),$ $f(r\theta)$ is an affine function for $r\in [0,\rho_{\supp(f)}(\theta)]$, and, for every $r>0,$ $\phi(r\theta)$ is independent of $r$; 
    \item for almost every $\theta\in\Omega_f,$ one has $f(r\theta)=f(o)$ for every $r\in [0,\rho_{\supp(f)}(\theta)].$
\end{enumerate}
\end{lemma}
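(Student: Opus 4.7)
The plan is to pass to polar coordinates at the origin and split the outer integration over $\s$ into the contributions from $\Omega_f$ and from $\s\setminus\Omega_f$, reducing everything to one-dimensional radial estimates. Concretely,
\[
\int_{\supp(f)} q(f(x))\,d\mu(x)=\int_{\s}\int_0^{\rho_{\supp(f)}(\theta)}q(f(r\theta))\phi(r\theta)r^{n-1}\,dr\,d\theta,
\]
and one handles the two ranges of $\theta$ separately.

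For $\theta\in\Omega_f$, the concavity of $f$ combined with $f(o)=\max f$ gives $f(r\theta)\leq f(o)$ on $[0,\rho_{\supp(f)}(\theta)]$, and the monotonicity of $q$ upgrades this to $q(f(r\theta))\leq q(f(o))$; integrating against $\phi(r\theta)r^{n-1}\,dr$ produces exactly the second term on the right-hand side, and equality throughout demands $f(r\theta)=f(o)$ on the whole ray, i.e.\ condition~(2).

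For $\theta\in\s\setminus\Omega_f$, I would apply Lemma~\ref{l:concave} to the concave one-variable function $r\mapsto f(r\theta)$ (whose right-derivative at $0$ is strictly negative by definition of $\Omega_f$), obtaining $\rho_{\supp(f)}(\theta)\leq z(\theta)$ together with the affine majorant
\[
f(r\theta)\leq f(o)\bigl(1-z(\theta)^{-1}r\bigr),\qquad r\in[0,\rho_{\supp(f)}(\theta)],
\]
with simultaneous equality precisely when $\rho_{\supp(f)}(\theta)=z(\theta)$ and $f(r\theta)$ is affine on the ray. Applying $q$ (non-decreasing) together with the change-of-variables identity
\[
\beta=\frac{n}{z(\theta)^n}\int_0^{z(\theta)}q\bigl(f(o)(1-z(\theta)^{-1}r)\bigr)r^{n-1}\,dr
\]
(via $t=1-r/z(\theta)$) reduces the $(\s\setminus\Omega_f)$-contribution to the one-dimensional inequality
\[
\int_0^{\rho_{\supp(f)}(\theta)}q\bigl(f(o)(1-z(\theta)^{-1}r)\bigr)\phi(r\theta)r^{n-1}\,dr\le\beta\int_0^{z(\theta)}\phi(r\theta)r^{n-1}\,dr.
\]

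The hard part will be this last one-dimensional bound, which I would prove using Chebyshev's integral inequality. On $[0,z(\theta)]$, the factor $r\mapsto q(f(o)(1-z(\theta)^{-1}r))$ is non-increasing while $r\mapsto\phi(r\theta)$ is non-decreasing (this is where the radially non-decreasing hypothesis on $\phi$ enters), and with the non-negative weight $r^{n-1}\,dr$ Chebyshev's inequality produces the desired correlation bound over the full interval $[0,z(\theta)]$. Reducing the left-hand integral from $[0,z(\theta)]$ down to the smaller interval $[0,\rho_{\supp(f)}(\theta)]$ can be handled after a harmless additive normalization bringing $q$ to satisfy $q\ge 0$, for which the integrand is non-negative on the discarded tail $[\rho_{\supp(f)}(\theta),z(\theta)]$. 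The equality case of the overall chain then follows from equality in Lemma~\ref{l:concave} (forcing $\rho_{\supp(f)}(\theta)=z(\theta)$ and $f(r\theta)$ affine on the ray) together with equality in Chebyshev's inequality, which, provided $q$ is not constant on $[0,f(o)]$, forces $\phi(r\theta)$ to be independent of $r$; combining these reproduces condition~(1).
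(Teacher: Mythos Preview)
Your argument is correct for the case $q\ge 0$, and it takes a genuinely different (and more elementary) route than the paper. The paper does not prove Lemma~\ref{t:chak} directly; it cites it from \cite{LRZ22} and then establishes the generalization Theorem~\ref{ber_type}. There the method is Chakerian-style: one introduces the auxiliary function
\[
\psi(y)=\beta'\int_{\s}\int_0^{y\rho_L(\theta)}G(r,\theta)\,dr\,d\theta-\int_{\s}\int_0^{y\rho_L(\theta)}h\!\left(f(0,\theta)\Bigl[1-\tfrac{r}{y\rho_L(\theta)}\Bigr]\right)G(r,\theta)\,dr\,d\theta,
\]
computes $\psi'(y)$ via differentiation under the integral and integration by parts, and uses the homogeneity-type condition $G(ur,\cdot)\le u^{\alpha}G(r,\cdot)$ to force $\psi'\ge 0$; since $\psi(0^+)=0$, this yields $\psi(1)\ge 0$. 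Your direct appeal to Chebyshev's integral inequality on $[0,z(\theta)]$ with weight $r^{n-1}$ bypasses this machinery entirely and is arguably the cleanest way to obtain the specific lemma (indeed, Chebyshev with weight $r^{\alpha}$ would even recover the general Theorem~\ref{ber_type}, since the hypothesis there is exactly that $G(r,\theta)/r^{\alpha}$ is monotone in $r$). The paper's $\psi$-argument, on the other hand, makes the role of the parameter $\alpha$ and the equality conditions more transparent in the general setting.

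One point deserves correction: the ``harmless additive normalization'' you propose for the case when $q$ takes negative values does not work. Replacing $q$ by $q+c$ shifts the left-hand side by $c\,\mu(\supp f)$ but shifts the right-hand side by $c\int_{\s\setminus\Omega_f}\int_0^{z(\theta)}\phi r^{n-1}\,dr\,d\theta+c\int_{\Omega_f}\int_0^{\rho}\phi r^{n-1}\,dr\,d\theta$, and since $z(\theta)\ge\rho_{\supp(f)}(\theta)$ these differ in general; the inequality is \emph{not} invariant under shifts of $q$. In fact the lemma as literally stated (with $q:\R^+\to\R$) is false: take $n=1$, $\phi\equiv 1$, $q(t)=t-1$, and a symmetric piecewise-linear concave $f$ with $f(0)=1$, $f'(0^+)=-1$, $\rho=3/4<1=z$ to see LHS $>$ RHS. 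This is harmless for the paper, since every application there has $q=F^{-1}\ge 0$ (and Theorem~\ref{ber_type} explicitly assumes $h\ge 0$), but you should simply take $q\ge 0$ as a hypothesis rather than claim it can be arranged by a shift.
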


 We are now ready to prove the analogue of Zhang's projection inequality for $\P_\mu  K$. We remark that even when $\hid=1,$ the results presented here are strictly stronger than those of \cite{LRZ22}; in that work, there was a Lipschitz assumption on the density of the measure $\mu$.

 \begin{theorem}\label{t:ZhangInequality}
Fix $\hid \in \N$ and $K\in\conbod.$ Let $\nu_1,\dots,\nu_{\hid}$ be Borel measures on $\R^n$, each having radially non-decreasing density, and set $\nu = \nu_1\otimes \cdots \otimes \nu_{\hid}$ to be the associated product measure on $\Rnhi$. Let $F \colon \R^+ \to \R^+$ be a strictly increasing and differentiable function. Let a Borel measure $\mu$ on $\R^n$ be $F$-concave on a class of convex bodies $\mathcal{C}\subseteq\conbod$ with the property that $K\in \mathcal{C}$ yields $\mu\in\mathcal{M}(K)$ and $K\cap_{i=1}^\hid (K+x_i)\in\mathcal{C}$ for every $(x_1,\dots,x_\hid)\in D^\hid (K)$.
Then, 
\begin{equation}\label{eq:zhang_ineq}
\nu\left(\frac{F(\mu(K))}{F'(\mu(K))} \PP_{\mu}K\right) \geq \frac{\int_K \prod_{i=1}^{\hid}\nu_i(y-K)\, d\mu(y)}{n\hid \int_0^1 F^{-1}[F(\mu(K))t](1-t)^{n\hid-1}dt}.
\end{equation}
Equality occurs if, and only if, the following are true:
\begin{enumerate}
    \item If $\varphi$ is the density of $\nu,$ then, for each $\bar\theta\in\S$, $\varphi(r\bar\theta)$ is independent of $r,$ and
    \item for each $\bar\theta\in\S,$ $F\circ g_{\mu,\hid}(K,r\bar\theta)$ is an affine function in the variable $r$ for $r\in[0,\rho_{D^\hid(K)}(\bar\theta)]$, which is also equivalent to $$D^\hid(K)= \frac{F(\mu(K))}{F^\prime(\mu(K))} \PP_{\mu} K.$$
\end{enumerate}
\end{theorem}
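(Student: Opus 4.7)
The plan is to apply the chord integral inequality of Lemma~\ref{t:chak} in dimension $nm$ to the $F$-transform of the $m$th-order $\mu$-covariogram. First, by Fubini's theorem together with the product structure of $\nu$ and Definition~\ref{eq_higher_co}, the numerator on the right-hand side of \eqref{eq:zhang_ineq} unfolds as
\begin{equation*}
\int_K \prod_{i=1}^{\hid}\nu_i(y-K)\, d\mu(y) = \int_{\Rnhi} \int_K \prod_{i=1}^{\hid} \chi_K(y - z_i)\, d\mu(y)\, d\nu(\bar z) = \int_{\D(K)} g_{\mu,\hid}(K,\bar z)\, d\nu(\bar z).
\end{equation*}

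Next, I would set $f(\bar x) := F(g_{\mu,\hid}(K, \bar x))$, a function on $\Rnhi$ supported on $\D(K)$. By Lemma~\ref{l:higher_covario} together with $F$ strictly increasing, $f$ is concave; moreover $f(o) = F(\mu(K)) = \max f$, and $o \in \operatorname{int}(\D(K))$. The chain rule combined with Theorem~\ref{t:variationalformula} gives
\begin{equation*}
\frac{df(r\bar\theta)}{dr}\bigg|_{r=0^+} = -F'(\mu(K))\, h_{\P_\mu K}(\bar\theta),
\end{equation*}
which is strictly negative on all of $\S$ by the standing hypothesis $\phi>0$ on $K$ (see \eqref{positive-derivative-1}); in particular, the set $\Omega_f$ of Lemma~\ref{t:chak} is empty. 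Setting $C := F(\mu(K))/F'(\mu(K))$, Lemma~\ref{l:concave} identifies $z(\bar\theta)=C\rho_{\PP_\mu K}(\bar\theta)$, so that $C\,\PP_\mu K$ has radial function $z$.

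I would then invoke Lemma~\ref{t:chak} on $\Rnhi$ with the strictly increasing $q := F^{-1}$ and the product measure $\nu$, whose density $\varphi(\bar x) = \prod_i \varphi_i(x_i)$ is radially non-decreasing as a product of radially non-decreasing factors. The left-hand side equals $\int_{\D(K)} g_{\mu,\hid}(K,\bar x)\,d\nu(\bar x)$, which by the Fubini identity above matches the numerator of \eqref{eq:zhang_ineq}. On the right-hand side, $q(f(o))=\mu(K)$ gives
\begin{equation*}
\beta = nm\int_0^1 F^{-1}\!\big(F(\mu(K))\,t\big)(1-t)^{nm-1}\,dt,
\end{equation*}
and since $\Omega_f=\emptyset$, rewriting the surviving integral in polar coordinates on $\Rnhi$ produces exactly $\nu(C\,\PP_\mu K)$. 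Rearranging yields \eqref{eq:zhang_ineq}.

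For equality, only condition~(1) of Lemma~\ref{t:chak} is relevant (since $\Omega_f=\emptyset$): for each $\bar\theta \in \S$, one needs $z(\bar\theta) = \rho_{\D(K)}(\bar\theta)$ (equivalently $\D(K) = C\,\PP_\mu K$), $r \mapsto F(g_{\mu,\hid}(K,r\bar\theta))$ affine on $[0,\rho_{\D(K)}(\bar\theta)]$, and $\varphi(r\bar\theta)$ independent of $r$. These are precisely conditions (2) and (1) of the theorem statement, respectively. The step I expect to require the most care is the clean identification of the chord-integral right-hand side as $\nu(C\,\PP_\mu K)$ after the polar change of variables; everything else is a direct instantiation of already-established machinery.
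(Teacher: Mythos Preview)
Your proposal is correct and follows essentially the same approach as the paper: compute the integral $\int_{\D(K)} g_{\mu,\hid}(K,\bar z)\,d\nu(\bar z)$ via Fubini, then apply Lemma~\ref{t:chak} in $\R^{n\hid}$ with $f=F\circ g_{\mu,\hid}(K,\cdot)$ and $q=F^{-1}$, using Theorem~\ref{t:variationalformula} to identify $z(\bar\theta)$ and observing $\Omega_f=\emptyset$. The paper's proof is identical in structure and detail, including the derivation of the equality conditions.
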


\begin{proof} Our goal is to estimate 
\[
I := \int_{\R^{n\hid}} g_{\mu,\hid}(K, \overline{x})\, d \nu(\overline{x}),
\]  where $g_{\mu,\hid}(K, \bar x)$ is given in Definition~\ref{eq_higher_co}. Note that, from Lemma~\ref{l:higher_covario}, $F\circ g_{\mu,\hid}(K, \bar x)$ is concave on its support. It follows from Fubini's theorem that 
\begin{align*}
I &= \int_{\R^n} \cdots \int_{\R^n} \mu \left[K \cap \left(\bigcap_{i=1}^{\hid} (x_i +K)\right) \right] d\nu_{\hid}(x_{\hid}) \cdots d\nu_1(x_1)\\
&= \int_{\R^n} \cdots \int_{\R^n} \left( \int_K \prod_{i=1}^{\hid} \chi_{y-K}(x_i) \,d\mu(y) \right)\, d\nu_{\hid}(x_{\hid}) \cdots d\nu_1(x_1)\\
&= \int_K \prod_{i=1}^{\hid}\nu_i(y-K)\, d\mu(y).
\end{align*}
We now apply Lemma~\ref{t:chak} with $
f = F \circ g_{\mu,\hid}(K, \cdot)$ and $q = F^{-1}, $
together with \eqref{positive-derivative-1} and Theorem~\ref{t:variationalformula}. Note that $\Omega_f=\emptyset$, $f(o) = F(g_{\mu, \hid}(K, o)) = F(\mu(K))$, and  $z(\bar\theta) =\frac{F(\mu(K))}{F'(\mu(K))} \rho_{\PP_\mu K}(\bar\theta).$ Letting $\varphi$ denote the density of $\nu$, this yields
\begin{align*}
I &= \int_{D^\hid(K)} g_{\mu,\hid}(K, (\overline{x}))\, d\nu(\overline{x})\\ & = \int_{D^\hid(K)} F^{-1} [F \circ g_{\mu,\hid}(K, \overline x)] d\nu(\overline{x}) \\
&\leq \left(\int_{\S}\int_0^{\frac{F(\mu(K))}{F^\prime(\mu(K))}\rho_{\PP_\mu K}(\bar\theta)} \varphi(r\bar\theta)r^{n\hid-1}drd\bar\theta\right) 
\\
&\quad\quad\times \left( n\hid  \int_0^1 F^{-1}[F(\mu(K))t](1-t)^{n\hid-1}dt\right)
\\
&= n\hid \nu\left(\frac{F(\mu(K))}{F'(\mu(K))} \PP_\mu K \right) \int_0^1 F^{-1}[F(\mu(K))t](1-t)^{n\hid-1}dt,
\end{align*}
as desired. The equality conditions are inherited from Lemma~\ref{t:chak} as well.
\end{proof}

Notice that, as $F:\R^+\to\R^+$ is strictly increasing, one has $$
\int_0^1 F^{-1}[F(\mu(K))t](1-t)^{n\hid-1}dt\leq  \mu(K) \int_0^1 (1-t)^{n\hid-1}dt =\frac{\mu(K)}{mn}.$$ Thus, \eqref{eq:zhang_ineq} implies in particular that\begin{equation}\label{eq:zhang_ineq-simplified}
\nu\left(\frac{F(\mu(K))}{F'(\mu(K))} \PP_{\mu}K\right) \geq \int_K \frac{\prod_{i=1}^{\hid}\nu_i(y-K)}{\mu(K)}\, d\mu(y).
\end{equation}  In the case when $F(t) = t^s$, i.e., the measure in Theorem \ref{t:ZhangInequality} is assumed to be $s$-concave, then \eqref{eq:zhang_ineq}  becomes 
\begin{equation}\label{eq:zhang_ineq-s-concave}
\nu\left(\frac{\mu(K)}{s} \cdot \PP_{\mu}K\right) \geq \frac{\int_K \prod_{i=1}^{\hid}\nu_i(y-K)\, d\mu(y)}{n\hid \mu(K) \int_0^1 t^{1/s}(1-t)^{n\hid-1}dt}.
\end{equation} After a rearrangement, one gets 
\begin{equation}\label{eq:zhang_s_conc}
\frac{\mu(K)\nu\left(\frac{1}{s}\mu(K)\PP_{\mu} K\right)}{\int_K \prod_{i=1}^{\hid}\nu_i(y-K)\, d\mu(y)} \geq \binom{n\hid + \frac{1}{s}}{n\hid}.
\end{equation}

Moreover, in this case the equality conditions are quite simple to state: $K$ must be a $n$-dimensional simplex and the density $\phi$ of $\mu$ must be constant on $K$ -- that is, equality holds only in the classical case. These results are summarized in the following corollary. 

\begin{corollary}\label{cor:ZhangInequality}
Fix $\hid \in \N$ and $K\in\conbod.$ Let $\nu_1,\dots,\nu_{\hid}$ be Borel measures on $\R^n$, each having radially non-decreasing density, and set $\nu = \nu_1\otimes \cdots \otimes \nu_{\hid}$ to be the associated product measure on $\Rnhi$. Let $s > 0$ and let $\mu$ be a locally finite, regular, $s$-concave  measure on $\R^n$. Then, inequality \eqref{eq:zhang_s_conc} holds with equality if, and only if, the following are true:
\begin{enumerate}
    \item[(i)] If $\varphi$ is the density of $\nu,$ then, for each $\bar\theta\in\S$, $\varphi(r\bar\theta)$ is independent of $r,$
    \item[(ii)] $K$ is a $n$-dimensional simplex,
    \item[(iii)] the density of $\mu$ is constant on $K$,
    \item[(iv)] $s = 1/n$.
\end{enumerate}
\end{corollary}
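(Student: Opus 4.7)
The plan is to deduce Corollary \ref{cor:ZhangInequality} directly from Theorem \ref{t:ZhangInequality} by specializing to the power function $F(t)=t^s$, and then to identify the resulting equality conditions with those already catalogued in Theorem \ref{t:set_s_con}. Since $F'(t)=st^{s-1}$, we have $F(\mu(K))/F'(\mu(K))=\mu(K)/s$, so inequality \eqref{eq:zhang_ineq} becomes \eqref{eq:zhang_ineq-s-concave}. A short Beta-function computation, namely
\[
nm\int_0^1 t^{1/s}(1-t)^{nm-1}\,dt \;=\; nm\cdot B\!\left(\tfrac{1}{s}+1,\,nm\right)\;=\;\binom{nm+\tfrac{1}{s}}{nm}^{-1},
\]
then rearranges \eqref{eq:zhang_ineq-s-concave} into the stated form \eqref{eq:zhang_s_conc}. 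This disposes of the inequality part essentially mechanically.

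For the equality characterization, condition (i) is handed to us verbatim by the first equality clause in Theorem \ref{t:ZhangInequality}. The remaining work is to show that the second equality clause — which for $F(t)=t^s$ reads $D^\hid(K)=\tfrac{\mu(K)}{s}\PP_{\mu}K$, or equivalently that $g_{\mu,\hid}(K,r\bar\theta)^{s}$ is an affine function of $r$ on $[0,\rho_{D^\hid(K)}(\bar\theta)]$ for every $\bar\theta\in\S$ — is strong enough to force (ii), (iii), and (iv). My plan is to observe that, using the boundary data $g_{\mu,\hid}(K,o)^s=\mu(K)^s$ at $r=0$ and $g_{\mu,\hid}(K,\bar x)^s=0$ on $\partial D^\hid(K)$ together with the definition \eqref{eq:lk} of the roof function, the radial affinity is equivalent to the pointwise identity
\[
g_{\mu,\hid}^{s}(K,\bar x)\;=\;\mu(K)^{s}\,\ell_{D^\hid(K)}(\bar x)\qquad\text{for all }\bar x\in\R^{nm}.
\]

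This is precisely the equality condition stated in Theorem \ref{t:set_s_con}. Because Corollary \ref{cor:ZhangInequality} assumes that $\mu$ is a locally finite, regular, $s$-concave measure on $\R^n$, the second half of Theorem \ref{t:set_s_con} applies: such a measure can be $s$-concave only for $s\in(0,1/n]$, and the above identity holds if and only if $K$ is an $n$-dimensional simplex, $\mu$ is a positive multiple of the Lebesgue measure on $K$, and $s=1/n$. These are exactly conditions (ii)–(iv), and in that case condition (i) combined with the radially non-decreasing hypothesis on each $\nu_i$ forces each density to be radially constant as asserted. Conversely, a direct check in the simplex/Lebesgue/$s=1/n$ case shows all steps in the proof of Theorem \ref{t:ZhangInequality} (the application of Lemma \ref{t:chak} in particular) are saturated.

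The main obstacle I expect is the translation step between the two formulations of the equality condition — namely, verifying that the radial affinity of $g_{\mu,\hid}(K,\cdot)^{s}$ on each ray really upgrades to the global roof-function identity. This requires using the concavity of $g_{\mu,\hid}(K,\cdot)^{s}$ from Lemma \ref{l:higher_covario} together with the fact that an $s$-concave function which is affine along every ray through a common interior maximum and vanishes on the boundary of its support must coincide with the scaled roof function of that support. Once this identification is in hand, the deep classification work — that this identity forces the simplex/Lebesgue/$s=1/n$ rigidity — is imported wholesale from Theorem \ref{t:set_s_con} and needs no re-proving.
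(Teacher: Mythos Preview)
Your proposal is correct and follows essentially the same approach as the paper: specialize Theorem~\ref{t:ZhangInequality} to $F(t)=t^s$, read off condition~(i) from equality clause~(1), and then argue that the radial affinity of $g_{\mu,\hid}(K,\cdot)^s$ forces the simplex/Lebesgue/$s=1/n$ rigidity. The only difference is cosmetic: the paper invokes Proposition~\ref{p:simp} directly for this last step, whereas you route through the roof-function identity and the equality clause of Theorem~\ref{t:set_s_con} --- but since the proof of Theorem~\ref{t:set_s_con} itself rests on Proposition~\ref{p:simp}, the two paths are equivalent, and the ``translation step'' you flag as the main obstacle is in fact unnecessary (Proposition~\ref{p:simp} is already phrased in terms of radial affinity, so no upgrade to the global roof identity is needed).
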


In the following, we will provide a detailed proof for the equality characterization in Corollary \ref{cor:ZhangInequality}. To fulfill this goal,  we need to introduce a bit more background. First, recall that the following are equivalent (see e.g.,  \cite[Section 6]{EGK64}, or \cite{Choquet,RS57}):
	\begin{enumerate}
        \item[(i)] $K$ is a $n$-dimensional simplex.
	    \item[(ii)] \label{item:hom} For any $x \in \R^n$ such that $(K + x) \cap K \neq \emptyset$, $K\cap (K+ x)$ is homothetic to $K$, namely, there exist a constant $a>0$ and a vector $x_0\in \Rn,$ such that $K\cap (K+ x)=aK+x_0=\{ax+x_0: x\in K\}$.
     \end{enumerate}

Next, we recall a result of Milman and Rotem \cite[Corollary 2.16]{MR14}:

\begin{lemma}\label{l:milman_rotem} Let $\mu$ be a locally finite, regular, $s$-concave measure on $\R^n$ with density $\phi$. Suppose that $t \in (0, 1)$ and $A, B \subset \mathbb R^n$ are Borel sets of positive measure satisfying 
$$\mu(t   A + (1 - t) B)^s = t \mu(A)^s + (1 - t) \mu(B)^s.$$
Then up to $\mu$-null sets, there exist $c, m > 0$, $b \in \R^n$ such that $B = mA + b$ and $\phi(mx + b) = c \cdot \phi(x)$ for all $x \in A$. 
\end{lemma}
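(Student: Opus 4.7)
The plan is to derive the conclusion from the equality case of the Borell--Brascamp--Lieb (BBL) inequality, with Borell's classification of $s$-concave measures as the starting point. The lemma is essentially an upgraded Brunn--Minkowski equality characterization, so the natural approach is to reduce the measure-theoretic equality to a functional BBL equality and then invoke the known rigidity for that.

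\textbf{Step 1 (Borell's classification).} I first invoke Borell's classification (recalled in the excerpt just after Proposition~\ref{p:covario_concave}): since $\mu$ is locally finite, regular and $s$-concave with $s > 0$, necessarily $s \in (0, 1/n]$, and either $\phi$ is constant on its (convex) support (when $s = 1/n$), or $\phi$ is $p$-concave on its convex support with $p = s/(1 - ns) > 0$ (when $s \in (0, 1/n)$).

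\textbf{Step 2 (the flat case $s = 1/n$).} Here $\phi = c \chi_\Omega$ for some convex set $\Omega$ and constant $c > 0$, so $\mu(C) = c\,|C \cap \Omega|$. The equality hypothesis becomes $|(tA + (1 - t)B) \cap \Omega|^{1/n} = t\,|A \cap \Omega|^{1/n} + (1 - t)\,|B \cap \Omega|^{1/n}$. Convexity of $\Omega$ gives $(tA + (1 - t)B) \cap \Omega \supseteq t(A \cap \Omega) + (1 - t)(B \cap \Omega)$, so the classical Brunn--Minkowski equality case forces $A \cap \Omega$ and $B \cap \Omega$ to be homothetic up to $\lambda$-null sets, yielding $B = mA + b$ modulo $\mu$-null sets with $c = 1$ in the scaling relation (since $\phi$ is constant on $\Omega$).

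\textbf{Step 3 (the case $s \in (0, 1/n)$ via BBL).} I apply the BBL inequality to $f = \phi \chi_A$, $g = \phi \chi_B$, $h = \phi \chi_{tA + (1 - t)B}$. For $x \in A$, $y \in B$, the $p$-concavity of $\phi$ gives
\[
h(tx + (1 - t)y) = \phi(tx + (1 - t)y) \geq M_p(\phi(x), \phi(y); t) = M_p(f(x), g(y); t),
\]
so BBL yields $\int h \geq M_q(\int f, \int g; t)$ with $q = p/(1 + np) = s$, which is precisely the measure inequality $\mu(tA + (1 - t)B)^s \geq t\mu(A)^s + (1 - t)\mu(B)^s$. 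The hypothesis says this is an equality, so I may invoke Dubuc's equality theorem for BBL: up to null sets, one has $g(x) = c\,f\bigl((x - b)/m\bigr)$ for some $c, m > 0$ and $b \in \R^n$, and the interpolant $h$ is determined accordingly. Translating back, this means $\chi_B(x) \phi(x) = c \chi_A\bigl((x - b)/m\bigr)\phi\bigl((x - b)/m\bigr)$ a.e., which gives $B = mA + b$ up to $\mu$-null sets and $\phi(mx + b) = c\,\phi(x)$ for $x \in A$.

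\textbf{Main obstacle.} The delicate point is correctly invoking Dubuc's equality characterization for BBL when $f$ and $g$ are not continuous or strictly positive on their supports (they are indicator-weighted densities that may vanish on parts of $A, B$). Classical statements of Dubuc's theorem assume nondegeneracy; to handle the general Borel setting one either reduces via an approximation/truncation argument (restricting $A, B$ to sublevel sets of $\phi$ where $\phi$ is bounded away from $0$ and $\infty$) and passes to the limit, or appeals to a refined equality version available in the modern BBL literature. Once Dubuc's conclusion is in hand, the translation to the set-and-density conclusion is essentially automatic.
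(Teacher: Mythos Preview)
The paper does not supply its own proof of this lemma: it is quoted verbatim as ``a result of Milman and Rotem [Corollary 2.16, MR14]'' and used as a black box in the proof of Proposition~\ref{p:simp}. So there is no in-paper argument to compare against.

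Your route---Borell's structure theorem followed by the equality case of Borell--Brascamp--Lieb (Dubuc)---is the standard and correct way to derive such a statement, and it is essentially how the cited reference proceeds. Your Step~2 and Step~3 are sound in outline. The obstacle you flag is real: Dubuc's rigidity is typically stated for functions that are, say, upper semicontinuous with convex support, whereas $\phi\chi_A$ and $\phi\chi_B$ need not be. In the Milman--Rotem treatment this is handled by working directly with the $p$-concave density $\phi$ on its convex support and noting that equality in BBL forces equality in the pointwise $p$-mean condition almost everywhere, which in turn pins down both the support relation $B = mA + b$ and the density relation $\phi(mx+b) = c\phi(x)$ simultaneously; no separate approximation by nice sets is needed once one tracks where equality must hold in the chain of inequalities. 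If you want a self-contained write-up, that is the cleanest way to close the gap you identified.
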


  We can now prove the following proposition. 

\begin{proposition}
	\label{p:simp}
	Fix $K\in\conbod$, $\hid\in\N$, and $s>0$. Let $\mu$ be a locally finite,  regular, and $s$-concave Borel measure on compact subsets of the support of its density $\phi$, which contains $K$. Then, for every $\theta\in\s$, $g_{\mu,\hid}(K;r\theta)^s$ is an affine function in $r$ for $r\in[0,\rho_{D^\hid (K)}(\theta)]$ if, and only if, $K$ is an $n$-dimensional simplex, $\phi$ is constant on $K$, and $s = 1/n$.
 \end{proposition}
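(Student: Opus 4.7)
The plan is to prove both directions separately; the only-if direction is the substantive one.

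For the if direction, suppose $K$ is an $n$-simplex cut out by halfspaces $\langle a_j,\,\cdot\,\rangle \le b_j$ for $j = 0, \ldots, n$, that $\phi \equiv c$ on $K$, and $s = 1/n$. Since $\mu|_K = c\lambda|_K$, one has $g_{\mu,\hid}(K, r\bar\theta) = c\vol_n(K\cap\bigcap_i(K+r\theta_i))$, and the intersection equals $\{y : \langle a_j, y\rangle \le b_j - r\max_i\langle a_j,\theta_i\rangle_-\}$. Using the unique (up to scale) positive linear relation $\sum_j \lambda_j a_j = 0$ among the facet normals of a simplex, a direct computation shows this polytope is a homothet $m(r)K + c(r)$ of $K$ with scaling factor $m(r) = 1 - r\sum_j\lambda_j\max_i\langle a_j,\theta_i\rangle_-\big/\sum_j\lambda_j b_j$, which is affine in $r$. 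Hence $g_{\mu,\hid}(K, r\bar\theta)^{1/n} = (c\vol_n(K))^{1/n}m(r)$ is affine in $r$.

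For the only-if direction, the affine hypothesis is equivalent to the pointwise identity $g_{\mu,\hid}(K, t\bar x)^s = t\, g_{\mu,\hid}(K, \bar x)^s + (1 - t)\mu(K)^s$ for every $\bar x \in D^\hid(K)$ and $t \in [0, 1]$. I would revisit the chain used in the proof of Lemma~\ref{l:higher_covario}:
$$\mu\Bigl(K \cap \bigcap_i (K + t x_i)\Bigr)^s \;\geq\; \mu(t B + (1-t) K)^s \;\geq\; t\mu(B)^s + (1-t)\mu(K)^s,$$
where $B = K \cap \bigcap_i(K + x_i)$. Both inequalities must collapse to equalities. Applying Lemma~\ref{l:milman_rotem} to the second yields $m(\bar x) \in (0, 1]$, $b(\bar x) \in \R^n$, and $c(\bar x) > 0$ with $B = m(\bar x)K + b(\bar x)$ (genuine set equality, since both are convex bodies agreeing modulo null sets) and $\phi(m(\bar x)y + b(\bar x)) = c(\bar x)\phi(y)$ a.e.\ on $K$. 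Specializing $\bar x = (x, 0, \ldots, 0)$ shows $K \cap (K + x)$ is homothetic to $K$ for every $x \in DK$, which by the Eggleston--Gr\"unbaum--Klee characterization recalled just before Lemma~\ref{l:milman_rotem} forces $K$ to be an $n$-simplex.

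Knowing $K$ is a simplex, the contraction $y \mapsto m(\bar x)y + b(\bar x)$ has a unique fixed point $y^*(\bar x) = b(\bar x)/(1 - m(\bar x))$, and iteration of the map shows $y^*(\bar x) \in K$. Evaluating the functional equation at $y = y^*(\bar x)$ yields $c(\bar x) = 1$. Iterating the contraction from any $y \in K$ while invoking the continuity of $\phi$ on $\mathrm{int}(K)$ (guaranteed by Borell's classification of $s$-concave measures) then gives $\phi(y) = \phi(y^*(\bar x))$ for every $y \in \mathrm{int}(K)$. Letting $y^*(\bar x)$ range over a dense subset of $K$ as $\bar x$ varies forces $\phi$ to be constant on $K$. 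At this stage the equality in the concavity chain becomes $\vol_n(tK + (1-t)B)^s = t\vol_n(K)^s + (1-t)\vol_n(B)^s$; Brunn--Minkowski (itself an equality because $K$ and $B$ are homothetic) reduces this to $(tu + (1-t)v)^{ns} = tu^{ns} + (1-t)v^{ns}$ with $u = \vol_n(K)^{1/n} \neq v = \vol_n(B)^{1/n}$ for generic $\bar x$. Strict concavity of $x^{ns}$ on $(0, \infty)$ for $ns < 1$, combined with $s \le 1/n$ from Borell, forces $s = 1/n$. I expect the delicate step to be the iterative argument promoting the pointwise Milman--Rotem relations into genuine constancy of $\phi$.
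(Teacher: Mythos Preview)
Your overall strategy coincides with the paper's: both exploit equality in the two-step chain (the set inclusion from Lemma~\ref{l:higher_covario} followed by the $s$-concavity of $\mu$) to invoke Lemma~\ref{l:milman_rotem}, deduce that $K\cap(K+x)$ is homothetic to $K$ and hence that $K$ is a simplex, use a fixed point of the homothety to get $c(\bar x)=1$, and then reduce to Lebesgue measure to pin down $s=1/n$. Your handling of the if direction and of the final Brunn--Minkowski step is more explicit than the paper's but otherwise equivalent.

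The delicacy you flag is a genuine gap. Your iterative argument hinges on $T_{\bar x}^k(y)\to y^*(\bar x)$ and continuity of $\phi$ at $y^*(\bar x)$, but the fixed point necessarily lies on $\partial K$ whenever $B=K\cap\bigcap_i(K+x_i)$ meets $\partial K$: if $p\in B\cap\partial K$ then $p=(1-m(\bar x))\,y^*(\bar x)+m(\bar x)\,q$ for some $q\in K$, and a strict convex combination involving an interior point would be interior, forcing $y^*(\bar x)\in\partial K$. For $\bar x=(x,0,\dots,0)$ with $x\ne 0$ one always has $B\cap\partial K\ne\emptyset$ (take $y\in B$, move along $y+tx$ to the exit point $p\in\partial K$; since $y-x\in K$ one checks $p-x\in K$ as well, so $p\in B$). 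Borell's classification guarantees continuity of $\phi$ only on the interior of $\operatorname{supp}\mu$, so the limit step does not conclude, and for $m=1$ your assertion that $y^*(\bar x)$ ranges over a dense subset of $K$ is false: it ranges only in $\partial K$.

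The paper avoids the limit entirely. Once $K$ is known to be a simplex, for each point $x\in K$ it selects a vertex $v$ lying in every face through $x$ and observes that $K\cap(K+x-v)$ is the homothetic copy of $K$ under the affine map taking $v$ to $x$; a single application of the Milman--Rotem relation (with $c=1$) then reads off $\phi(x)=\phi(v)$. Letting $x$ vary over $\mathrm{int}(K)$ links all vertices to a common value and gives constancy directly, without any iterated contraction.
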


 \begin{proof} Note that $D^\hid(K)$ is the support of $g_{\mu,\hid}(K,\cdot)$. Let $\bar x \in \mathrm{int}(D^\hid(K))$ and  $t \in (0, 1)$. The fact that $g_{\mu, m}(K, \cdot)$ is affine on the segment $[o, \bar x]$ precisely means that for all $t \in (0, 1)$,
\begin{equation}\label{eq:zhang_eq}
\mu(K^t(o, \bar x))^s = (1-t) \mu(K)^s + t \mu(K^1(o, \bar x))^s,
\end{equation} 
where $K^t(o, \bar y) = K \bigcap_{i = 1}^m (K + t y_i)$ as in \eqref{def-K^t-1}. Examining the proof of Lemma \ref{l:higher_covario}, we see that
$K^t (o, \bar x) \supseteq (1 - t) K + t K^1(o, \bar x)$,  and equality can hold in \eqref{eq:zhang_eq} only if $K^t(o,  \bar x) = (1 -t) K +t K^1(0, \bar x)$.  In particular, we have
\begin{equation}\label{eq:zhang_eq_2}
\mu( (1 - t) K + t K^1(0, \bar x))^s =(1-t) \mu(K)^s + t\mu(K^1(0, \bar x))^s.
\end{equation}

By Lemma \ref{l:milman_rotem} and the fact that both $K$ and $K^1(0, \bar x)$ are convex,  it follows that  
$K$ is homothetic to $K^1(0, \bar x)=K \cap (K + x_1) \cap \cdots \cap (K + x_m)$ for any $\bar x\in \mathrm{int} (D^\hid (K))$. Setting $x_2 = \cdots = x_m = o,$ we have in particular that $K \cap (K + x_1)$ is homothetic to $K$ for all $x_1\in \mathrm{int}(DK)$. This implies that $K$ is a $n$-dimensional simplex.

It remains to show that the density of $\mu$ is constant on $K$. To this end,  we use the second conclusion of Lemma \ref{l:milman_rotem} (with $A=K$ and $B=K\cap (K+x)$ for arbitrary $x \in \mathrm{int}(DK)$): there exists $c(x) > 0$ such that for each $y \in K$, $\phi(A_x y) = c(x) \phi(y)$, where $A_x$ is the (unique) affine transformation which maps $K$ onto $K \cap (K + x)$. But note that $A_x$ is a continuous map from the compact, convex set $K$ to itself, so it has a fixed point $y$ due to Brouwer's fixed point theorem. For such $y$, we have $\phi(y) = \phi(A_x y) = c(x) \phi(y)$, implying $c(x) = 1$.

For any $x \in K$ there exists a vertex $v$ of $K$ such that $v$ belongs to any face containing $x$. Noting that $K - v + x$ is an $n$-simplex whose vertex corresponding to $v$ is at $x$, one sees that $K_x = K \cap (K - v + x)$ is an $n$-simplex homothetic to $K$, with the homothety sending $x$ to $v$; hence we have $\phi(x) = \phi(v)$. If $x$ is an interior point of $K$, any vertex of $K$ satisfies the above condition, which shows that $\phi(u) = \phi(v)$ for any two vertices $u, v \in K$, and hence that $\phi$ is constant on $K$. 

Thus we are back to the case of Lebesgue measure, which we know is affine when raised to the $1/n$ power on pairs of homothetic bodies, namely, $s=1/n$. 

Conversely, one can easily  verify that, if $K$ is a $n$-dimensional simplex and $\phi$ is constant on $K$,  then $g_{\mu, m}(K, \cdot)^{1/n}$ is affine on radial segments, as in the classical Zhang's projection inequality.
 \end{proof}

 \begin{proof} [Proof of Corollary \ref{cor:ZhangInequality}.] Suppose that equality holds in \eqref{eq:zhang_s_conc}. Condition (i) is precisely the same as condition (1) in the equality case of Theorem \ref{t:ZhangInequality}. Condition (2) of the equality case of Theorem \ref{t:ZhangInequality} states that $(g_{\mu, \hid}(K, r\bar\theta))^s$ is affine in $r$ for each $\bar\theta \in \S$, which, by Proposition~\ref{p:simp}, holds if and only if $K$ is a $n$-dimensional simplex, the density of $\mu$ is constant on $K$, and $s = 1/n$.
\end{proof}

 \section{Weighted $m$th-Order Radial Mean Bodies}
\label{sec:new_rad}
In this section, we will introduce the weighted $m$th-order radial mean bodies, and prove Theorem  \ref{t:set_s_con}. 

\begin{definition}\label{definition-m-radial mean body}
Let $\mu$ be a Borel measure on $\Rn$ with density $\phi$ containing $K\in \conbod$ in its support. For $\hid\in\N$ and $p >-1,$ the $\mu$-weighted \textit{$(\hid,p)$ radial mean body} $R^\hid_{p, \mu} K$ is defined as the star body in $\R^{n\hid}$ whose radial function for $\bar{\theta}\in\S$ is given by:
\begin{equation}
    \rho_{R_{p,\mu}^{\hid} K}(\bar\theta)^p=\frac{1}{\mu(K)} \int_K \left(\min _{i=1, \dots, \hid}\left\{\rho_{K-x}(-\theta_i)\right\}\right)^p d\mu(x)
\end{equation}
for $p\neq 0.$ The case $p=0$ follows from continuity of the $p$th average. We also let  $R^\hid_{\infty, \mu} K=\lim_{p \to \infty} R_{p,\mu}^{\hid} K$. 
\end{definition}

\begin{proposition}
    Fix $\hid,n\in \N$ and a Borel measure $\mu$ containing a convex body $K\in\conbod$ in its support. Then, $R^\hid_{\infty,\mu} K=D^\hid(K).$ 
\end{proposition}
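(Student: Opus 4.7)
The plan is to rewrite the radial function of $D^m(K)$ as a pointwise supremum of the integrand in Definition \ref{definition-m-radial mean body}, and then invoke the classical fact that $L^p$-averages converge to the essential supremum.

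First, unwinding the definition of $D^m(K)$, the condition $t\bar\theta \in D^m(K)$ for $t > 0$ is equivalent to the existence of some $y \in K$ with $y - t\theta_i \in K$ for every $i=1,\dots,m$, which in turn is equivalent to $t \leq \rho_{K-y}(-\theta_i)$ for all $i$. Hence
\[
\rho_{D^m(K)}(\bar\theta) \;=\; \sup_{y\in K} \min_{1\leq i\leq m} \rho_{K-y}(-\theta_i) \;=:\; M(\bar\theta).
\]
Setting $f_{\bar\theta}(y) := \min_{i} \rho_{K-y}(-\theta_i)$, Definition \ref{definition-m-radial mean body} reads $\rho_{R^m_{p,\mu}K}(\bar\theta) = \|f_{\bar\theta}\|_{L^p(K,\,d\mu/\mu(K))}$. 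The function $f_{\bar\theta}$ is bounded (by the diameter of $K$) and Borel measurable, and $\mu(K)/\mu(K) = 1$ makes $d\mu/\mu(K)$ a probability measure on $K$. A standard result from measure theory then gives
\[
\lim_{p\to\infty}\rho_{R^m_{p,\mu}K}(\bar\theta) \;=\; \operatorname*{ess\,sup}_{y\in K,\,\mu} f_{\bar\theta}(y).
\]

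It remains to identify this essential supremum with $M(\bar\theta)$. The bound $\operatorname{ess\,sup}_\mu f_{\bar\theta} \leq \sup_K f_{\bar\theta} = M(\bar\theta)$ is automatic. For the reverse inequality, upper semicontinuity of $y \mapsto \rho_{K-y}(-\theta_i)$ on $K$ (a direct consequence of the closedness of $K$) together with compactness of $K$ shows that $M(\bar\theta)$ is attained at some $y^\ast\in K$. Fix any $y_0\in\operatorname{int}(K)$ and put $y_t = (1-t)y_0 + ty^\ast$; then $y_t\in\operatorname{int}(K)$ for $t\in[0,1)$, and by convexity of $K$ we have
\[
y_t \,-\,\bigl[(1-t)\rho_{K-y_0}(-\theta_i) + t\rho_{K-y^\ast}(-\theta_i)\bigr]\theta_i \;\in\; K
\]
for every $i$, which gives $\liminf_{t\to 1^-} f_{\bar\theta}(y_t) \geq f_{\bar\theta}(y^\ast) = M(\bar\theta)$. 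Combined with upper semicontinuity, $f_{\bar\theta}(y_t) \to M(\bar\theta)$ as $t\to 1^-$. Since $f_{\bar\theta}$ is continuous on $\operatorname{int}(K)$, for any $\varepsilon > 0$ and $t$ sufficiently close to $1$, there is an open neighborhood $U \subset \operatorname{int}(K)$ of $y_t$ on which $f_{\bar\theta} > M(\bar\theta) - \varepsilon$. The hypothesis that $K$ is contained in the support of $\mu$ forces $\mu(U) > 0$, so $\operatorname{ess\,sup}_\mu f_{\bar\theta} \geq M(\bar\theta) - \varepsilon$. Letting $\varepsilon\to 0$ completes the proof.

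The only non-routine step is the last one: passing from the pointwise supremum to the essential supremum. This is where both the continuity/semicontinuity of $f_{\bar\theta}$ and the support hypothesis on $\mu$ enter; without the latter, $\mu$ could miss a neighborhood of the maximizer $y^\ast$, and the equality would fail.
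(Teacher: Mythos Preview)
Your proof is correct and follows the same approach as the paper: both compute $\rho_{D^m(K)}(\bar\theta)=\sup_{x\in K}\min_i\rho_{K-x}(-\theta_i)$ and identify this with the limit of the $L^p$-averages. The paper's argument is a one-liner that invokes ``properties of $p$th averages'' to pass directly to the maximum and then cites the unweighted case $R^m_\infty K=D^m(K)$; you instead supply the details the paper omits, namely the passage from essential supremum to true supremum via the support hypothesis on $\mu$ and the concavity/continuity of $y\mapsto\rho_{K-y}(-\theta_i)$ on $\operatorname{int}(K)$. Your version is therefore more self-contained, while the paper's is terser but leans on the reader to fill in exactly the step you spell out.
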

\begin{proof}
Let $\bar y \in \Rnhi \backslash \{o\}$. Properties of $p$th averages yield, if we write $\bar y=(y_1,\dots,y_\hid),$
that $$\rho_{R^\hid_{\infty,\mu} K}(\bar y)=\max_{x\in K}\min_{i=1,\dots,\hid}\rho_{K-x}(-y_i)=\rho_{R^\hid_{\infty} K}(\bar y)=\rho_{D^\hid (K)}(\bar y).$$ That is,  $R^\hid_{\infty,\mu} K=D^\hid(K).$ 
\end{proof}

 Let $\psi:[0,\infty)\to[0,\infty)$ be an integrable function that is right continuous and differentiable at $0.$ Define the Mellin transform (see e.g., \cite{FLM20}) by
\begin{equation} \mathbb{M}_\psi (p) =
    \begin{cases}
    \int_0^\infty t^{p-1}(\psi(t)-\psi(0))\,dt, & p\in (-1,0),
    \\
    \int_0^\infty t^{p-1}\psi(t)\,dt, & p>0 \ \text{ such that } \ t^{p-1}\psi(t)\in L^1(\R^+).
    \end{cases}
    \label{eq:Mel}
\end{equation}
Clearly, the Mellin transform is piece-wise continuous. The relation between the $\mu$-weighted $(\hid,p)$ radial mean bodies and the Mellin transform is summarized in the following result.

\begin{proposition}\label{prop-4.3-1}
    Let $\hid,n\in\N$ be fixed. Suppose that $\mu$ is a Borel measure on $\Rn$ with support containing $K\in\conbod$. Then, for each $\bar{\theta}\in \S,$
    \begin{equation}
\begin{split}
\rho_{R^\hid_{p,\mu} K}(\bar{\theta})^p&=p\mathbb{M}_{\frac{g_{\mu,\hid}(K,r\bar\theta)}{\mu(K)}}(p)
\\
&=\begin{cases}
    \frac{p}{\mu(K)}\int_0^{\rho_{D^\hid(K)}(\bar\theta)}g_{\mu,\hid}(K,r\bar\theta)r^{p-1}dr, & p>0,
    \\
    \frac{p}{\mu(K)}\int_{0}^{\infty} \left(g_{\mu,\hid}(K,r\bar\theta)-\mu(K)\right) r^{p-1} d r, & p\in (-1,0).
\end{cases}
\end{split}
\label{radial_ell}
\end{equation}
The case $p=0$ again follows by continuity.
\label{p:other_form}
\end{proposition}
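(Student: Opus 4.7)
The plan is to convert the defining integral of $\rho_{R^m_{p,\mu}K}(\bar\theta)^p$ into a one-dimensional radial integral by a layer-cake computation, and then recognize the result as the Mellin transform of $g_{\mu,\hid}(K,r\bar\theta)/\mu(K)$.

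The central geometric observation is the chain of equivalences
\[
\rho_{K-x}(-\theta_i) \ge r \;\Longleftrightarrow\; -r\theta_i \in K-x \;\Longleftrightarrow\; x \in K+r\theta_i,
\]
so that, for $x \in K$ and $r > 0$,
\[
\min_{1\le i\le \hid}\rho_{K-x}(-\theta_i) \ge r \;\Longleftrightarrow\; x \in K \cap \bigcap_{i=1}^{\hid}(K+r\theta_i).
\]
Combining this with \eqref{eq_vol_ell_co-mu} gives, for every $r>0$,
\[
\mu\!\left(\left\{x \in K : \min_i \rho_{K-x}(-\theta_i) \ge r\right\}\right) = g_{\mu,\hid}(K,r\bar\theta).
\]

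For $p>0$, I would apply Fubini (layer-cake) to rewrite
\[
\int_K \left(\min_i \rho_{K-x}(-\theta_i)\right)^p d\mu(x) = p\int_0^\infty r^{p-1}\, g_{\mu,\hid}(K,r\bar\theta)\,dr,
\]
and then truncate the outer integral at $\rho_{D^\hid(K)}(\bar\theta)$ since $g_{\mu,\hid}(K,r\bar\theta)$ vanishes beyond that radius (as $D^\hid(K)=\supp g_{\mu,\hid}(K,\cdot)$). Dividing by $\mu(K)$ and comparing with \eqref{eq:Mel} yields the first case of \eqref{radial_ell}.

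For $p \in (-1,0)$, the function $r \mapsto r^p$ is decreasing, so the above layer-cake identity must be dualized: for each $a>0$, $a^p = -p\int_0^\infty r^{p-1}\chi_{\{r>a\}}\,dr$. Integrating against $d\mu(x)$ over $K$ and applying Fubini gives
\[
\int_K \left(\min_i \rho_{K-x}(-\theta_i)\right)^p d\mu(x) = -p \int_0^\infty r^{p-1}\bigl[\mu(K) - g_{\mu,\hid}(K,r\bar\theta)\bigr]\,dr,
\]
which rearranges to match the second line of \eqref{radial_ell}. The limiting case $p=0$ is obtained by passing $p\to 0$ in either formula, as asserted.

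The main subtlety is justifying convergence and the use of Fubini in the regime $p\in(-1,0)$. Integrability at infinity is automatic because $g_{\mu,\hid}(K,r\bar\theta) - \mu(K) = -\mu(K)$ for $r > \rho_{D^\hid(K)}(\bar\theta)$ and $p-1 < -1$ ensures decay. Integrability at $r=0^+$ requires the estimate $\mu(K) - g_{\mu,\hid}(K,r\bar\theta) = O(r)$ as $r\to 0^+$, which follows from Theorem~\ref{t:variationalformula} together with the concavity of $F\circ g_{\mu,\hid}(K,\cdot)$ (or simply from the monotonicity and one-sided differentiability at the origin established there). With this uniform $O(r)$ control, the integrand is dominated by a constant multiple of $r^p$ near the origin, which is integrable since $p > -1$, legitimizing the Fubini exchange.
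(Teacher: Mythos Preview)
Your proof is correct and follows essentially the same route as the paper: the key geometric equivalence $\min_i \rho_{K-x}(-\theta_i)\ge r \Leftrightarrow x\in K\cap\bigcap_i(K+r\theta_i)$, followed by the layer-cake identity $a^p=p\int_0^a r^{p-1}dr$ for $p>0$ and its dual $a^p=-p\int_a^\infty r^{p-1}dr$ for $p\in(-1,0)$, then Fubini. Your final paragraph on integrability near $r=0^+$ is more careful than the paper (which treats the exchange as formal), though note that invoking Theorem~\ref{t:variationalformula} there requires $\mu\in\mathcal{M}(K)$, a slightly stronger hypothesis than the proposition actually assumes.
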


\begin{proof}
Observe that $x\in \cap_{i=1}^\hid(K+r\theta_i)$ if, and only if, $-r\theta_i\in K-x$ for every $i=1,\dots,\hid.$ But this is equivalent to $r\leq \rho_{K-x}(-\theta_i).$ Then, for $p>0$:
\begin{align*}
    \int_K&\left(\min_{i=1,\dots,\hid}\{\rho_{K-x}(-\theta_i)\}\right)^pd\mu(x)
    \\
    &=p\int_K\int_0^{\min_{i=1,\dots,\hid}\{\rho_{K-x}(-\theta_i)\}}r^{p-1}dr\,d\mu(x)
    \\
    &=p\int_{K}\int_0^\infty \chi_{\bigcap_{i=1}^\hid\{r\leq \rho_{K-x}(-\theta_i)\}}(r)r^{p-1}dr\,d\mu(x)
    \\
    &=p\int_0^\infty \int_{K}\chi_{\bigcap_{i=1}^\hid\{x\in (K+r\theta_i)\}}(x)\,d\mu(x)r^{p-1}dr
    \\
    &=p\int_0^\infty g_{\mu,\hid}(K,r\bar\theta)r^{p-1}dr \\ &=p\int_0^{\rho_{D^\hid(K)}(\bar\theta)} g_{\mu,\hid}(K,r\bar\theta)r^{p-1}dr.
\end{align*}
For $p\in (-1,0),$ we obtain
\begin{align*}
    \int_K&\left(\min_{i=1,\dots,\hid}\{\rho_{K-x}(-\theta_i)\}\right)^pd\mu(x) 
    \\
    &=-p\int_K\int_{\min_{i=1,\dots,\hid}\{\rho_{K-x}(-\theta_i)\}}^\infty r^{p-1}dr\,d\mu(x)
    \\
    &=-p\int_{K}\int_0^\infty \chi_{\bigcup_{i=1}^\hid\{r> \rho_{K-x}(-\theta_i)\}}(r)r^{p-1}dr\,d\mu(x)
    \\
    &=-p\int_0^\infty \int_{K}\chi_{\bigcup_{i=1}^\hid\{x\notin (K+r\theta_i)\}}(x)\,d\mu(x)r^{p-1}dr
     \\
    &=-p\int_0^\infty \mu\left(K\setminus \left(\bigcap_{i=1}^\hid (K+r\theta_i)\right)\right)r^{p-1}dr
    \\
    &=-p\int_0^\infty (\mu(K)-g_{\mu,\hid}(K,r\bar\theta))r^{p-1}dr.
\end{align*}
Inserting the definition of the Mellin transform, \eqref{eq:Mel}, yields the claim.
\end{proof}

It can be checked from Proposition~\ref{p:radial_ball}, Lemma~\ref{l:higher_covario},  and  Proposition~\ref{p:other_form} that, if $\mu$ is $s$-concave for $s \ge 0$ and $p \geq 0$, then  $R^\hid_{p,\mu} K$ is an $n\hid$-dimensional convex body containing the origin in its interior. However, for $p \in (-1,0)$, it is not known whether $R^\hid_{p,\mu} K$ is convex, even when $\mu$ is $s$-concave (or, more specifically, when $\mu$ is the Lebesgue measure) and $\hid = 1$. Observing from integration by parts, and the differentiability of $g_{\mu,\hid}(K,r\bar\theta)$ almost everywhere (as a function in $r$ on its support), we obtain that, for all $p\in (-1,0)\cup(0,\infty)$,
\begin{equation}\rho_{R^\hid_{p,\mu} K}(\bar{\theta})^p=-\frac{1}{\mu(K)}\int_0^{\rho_{D^\hid (K)}(\bar\theta)}(g_{\mu,\hid}(K,r\bar\theta))^\prime r^{p}dr,
\label{eq:beset_radial_form}
\end{equation}
where the derivative is in $r$ (notice since $g_{\mu,\hid}(K,r\bar\theta)$ is decreasing, the function $-(g_{\mu,\hid}(K,r\bar\theta))^\prime$ is positive). Clearly,   $R^\hid_{p,\mu} K \to \{o\}$ as $p\to (-1)^+$. In view of this, we need to renormalize $R^\hid_{p,\mu} K$ by $\left(p+1\right)^{1/p}R^\hid_{p,\mu} K$, which provides a wonderful relation between $R^\hid_{p,\mu} K$ and $\PP_\mu K$ as $p\to (-1)^+$. 

\begin{proposition}
\label{p:radial_body_limit}
    Fix $\hid,n\in\N$. Let $K\in \conbod$ be a convex body and $\mu\in \mathcal{M}(K).$ Then,  $$\lim_{p\to (-1)^+}\left(p+1\right)^{1/p}R^\hid_{p,\mu} K = \mu(K) \PP_\mu K.$$
\end{proposition}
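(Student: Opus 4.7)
My plan is to establish pointwise convergence of the radial functions on $\S$ and then upgrade to convergence of the star bodies. Fix $\bar\theta \in \S$ and let $h = h_{\P_\mu K}(\bar\theta) > 0$ (positivity follows from \eqref{positive-derivative-1}), $\rho = \rho_{D^\hid(K)}(\bar\theta)$, $M = \mu(K)$, and $G(r) = g_{\mu,\hid}(K, r\bar\theta)$. The starting point is the representation from Proposition~\ref{p:other_form} for $p \in (-1, 0)$:
\[
\rho_{R^\hid_{p,\mu} K}(\bar{\theta})^p = \frac{p}{M}\int_{0}^{\rho} (G(r) - M)\, r^{p-1} \,dr.
\]
By Theorem~\ref{t:variationalformula}, we have the first-order expansion $G(r) = M - h\,r + R(r)$ with $R(r) = o(r)$ as $r \to 0^+$. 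Multiplying the display above by $p+1$ and splitting accordingly gives
\[
(p+1)\, \rho_{R^\hid_{p,\mu} K}(\bar{\theta})^p = \frac{p}{M}\left[ -h(p+1)\int_0^\rho r^p\,dr + (p+1)\int_0^\rho R(r)\, r^{p-1}\,dr\right].
\]
The linear term equals $-h\,\rho^{p+1}$, which converges to $-h$ as $p \to -1^+$.

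The main obstacle is to show the remainder integral contributes zero in the limit; this is a Tauberian-type estimate. Given $\varepsilon > 0$, choose $\delta > 0$ so small that $|R(r)| \le \varepsilon r$ on $[0,\delta]$. Then
\[
\left|(p+1)\int_0^{\delta} R(r)\, r^{p-1}\,dr\right| \le \varepsilon (p+1)\int_0^\delta r^p\,dr = \varepsilon\,\delta^{p+1} \xrightarrow{p\to -1^+} \varepsilon.
\]
Since $R$ is bounded on the compact interval $[\delta,\rho]$ (as $G$ is continuous on $D^\hid(K)$) and $\int_\delta^\rho r^{p-1}\,dr$ remains bounded, the tail integral is $O(p+1)$ and vanishes in the limit. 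As $\varepsilon$ was arbitrary, $(p+1)\int_0^\rho R(r)\,r^{p-1}dr \to 0$, so
\[
\lim_{p\to -1^+} (p+1)\, \rho_{R^\hid_{p,\mu} K}(\bar\theta)^p = \frac{-1}{M}(-h) = \frac{h_{\P_\mu K}(\bar\theta)}{\mu(K)}.
\]

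Raising to the $1/p$-th power and using that $1/p \to -1$ (with continuity of $t \mapsto t^{1/p}$ for $t > 0$), one concludes
\[
\lim_{p \to -1^+} (p+1)^{1/p} \rho_{R^\hid_{p,\mu} K}(\bar\theta) = \frac{\mu(K)}{h_{\P_\mu K}(\bar\theta)} = \rho_{\mu(K)\PP_\mu K}(\bar\theta),
\]
which is precisely the radial function of the claimed limit body. Finally, to pass from pointwise convergence of radial functions to convergence of star bodies in the Hausdorff metric, I would observe that the above estimates can be made uniform in $\bar\theta \in \S$: the function $h_{\P_\mu K}$ is continuous on the compact sphere, and an equicontinuity argument on the family $\{r \mapsto G(r\bar\theta)/M : \bar\theta \in \S\}$ (which is a uniformly bounded family of decreasing functions with a common Lipschitz modulus near $0$ coming from the boundedness of $h_{\P_\mu K}$) allows the choice of $\delta$ above to be made uniformly in $\bar\theta$. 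This uniform version of the Tauberian step then yields uniform convergence of the radial functions, completing the proof.
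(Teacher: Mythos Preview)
Your argument is essentially correct and follows the same Tauberian idea as the paper, though the execution differs. The paper first integrates by parts to reach the form \eqref{eq:beset_radial_form} and then applies an off-the-shelf Tauberian lemma (equation \eqref{limit-s->1}, quoted from Haddad--Ludwig) with $\varphi(r)=-\mu(K)^{-1}\partial_r g_{\mu,\hid}(K,r\bar\theta)$; you instead stay with the integral form and carry out the $\varepsilon$--$\delta$ splitting by hand using the first-order expansion $G(r)=M-hr+o(r)$ from Theorem~\ref{t:variationalformula}. Both routes are short and rely on the same analytic input.

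One genuine slip: the representation you invoke from Proposition~\ref{p:other_form} for $p\in(-1,0)$ has upper limit $\infty$, not $\rho=\rho_{D^\hid(K)}(\bar\theta)$, because $G(r)-M=-M\neq 0$ for $r>\rho$. The missing tail equals
\[
\frac{p}{M}\int_\rho^\infty(-M)\,r^{p-1}\,dr=\rho^{\,p},
\]
so after multiplying by $p+1$ it tends to $0$ as $p\to(-1)^+$ and the limit is unaffected; but the formula as written is incorrect and should be fixed.

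Your closing paragraph on upgrading to uniform convergence goes beyond what the paper does (the paper stops at pointwise convergence of radial functions and calls that the proof). Your heuristic is reasonable, but note that boundedness of $h_{\P_\mu K}$ alone does not give a \emph{uniform} $o(r)$ remainder in the expansion; a rigorous argument would need equi-differentiability of $r\mapsto g_{\mu,\hid}(K,r\bar\theta)$ at $r=0$, which requires an extra input. Since the paper itself does not address this, it is not a gap relative to the stated result.
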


\begin{proof} For $p\in (-1,0),$ let $p=-s$. Then $s\in (0,1)$ and by  \eqref{eq:beset_radial_form}, for each $\bar{\theta}\in \S,$
\begin{equation}(1-s)\rho_{R^\hid_{p,\mu} K}(\bar{\theta})^{-s}=(1-s)\int_0^{\rho_{D^\hid (K)}(\bar\theta)}\frac{(-g_{\mu,\hid}(K,r\bar\theta))^\prime}{\mu(K)} r^{-s}dr.
\label{eq_almost-1}
\end{equation} It has been proved in \cite{HL24} (see also \cite[Lemma 4] {HL22})
that if $\varphi:[0, \infty) \rightarrow[0, \infty)$ is a measurable function with $\lim _{t \rightarrow 0^{+}} \varphi(t)=$ $\varphi(0)$ and such that $\int_0^{\infty} t^{-s_0} \varphi(t) \, dt<\infty$ for some $s_0 \in(0,1)$, then \begin{align}
   \lim _{s \rightarrow 1^{-}}(1-s) \int_0^{\infty} t^{-s} \varphi(t)\,dt =\varphi(0).  \label{limit-s->1}
\end{align}
This, together with  \eqref{eq_almost-1},   Theorem~\ref{t:variationalformula} and Definition \ref{def:meas_bodies}, yields that,  for every $\bar\theta\in\S$, 
\begin{equation}\lim_{s\rightarrow 1^-} (1-s)\rho_{R^\hid_{p,\mu} K}(\bar{\theta})^{-s}= \frac{(-g_{\mu,\hid}(K,r\bar\theta))^\prime}{\mu(K)} \bigg|_{r=0^+} =\frac{h_{\P_\mu K}(\bar{\theta})}{\mu(K)}.
\label{eq_almost-1--11}
\end{equation} This further implies, setting $p=-s$, that 

\begin{equation}
\label{eq:-1_body}
\lim_{p\to (-1)^+}\left(p+1\right)^{1/p}\rho_{R^\hid_{p,\mu} K}(\bar\theta)=\mu(K) \rho_{\PP_\mu K}(\bar\theta).\end{equation} This completes the proof. 
\end{proof}
 
Thus, the \textit{shape} of $R^{\hid}_{p,\mu} K$ approaches that of $\mu(K) \PP_\mu K$ as $p\to (-1)^+.$
On the other hand, applying  H\"{o}lder's inequality to  \eqref{eq:beset_radial_form}  with respect to the probability measure $-\mu(K)^{-1}g_{\mu,\hid}(K,r\bar\theta)^\prime\,dr,$  if $\mu\in\mathcal{M}(K)$ for a fixed $K\in\conbod$, the following holds for $-1<p\leq q <\infty$: 
$$R^\hid_{p,\mu} K \subseteq R^\hid_{q,\mu} K \subseteq D^\hid(K).$$

We now show how this chain of inclusions may be reversed under an $F$-concavity assumption.
\begin{theorem}
\label{t:radial_F_inclusions}
Let $K\in\conbod$ be a convex body. Suppose that $F:[0,\mu(K))\to [0,\infty)$ is a continuous, increasing, and invertible function. Let $\mu$ be a finite Borel measure which is  $F$-concave on the class of convex subsets of $K$. Then, for $-1<p\le q <\infty$, one has
$$D^\hid (K)\subseteq C(q,\mu,K) R^\hid_{q,\mu}K \subseteq C(p,\mu,K) R^\hid_{p,\mu} K \subseteq \frac{F(\mu(K))}{F^\prime(\mu(K))}\PP_\mu K,$$
where $C(p,\mu,K)=$
$$\begin{cases}
    \left(\frac{p}{\mu(K)}\!\int_{0}^{1}\! F^{-1}\left[F(\mu(K))(1-t)\right]t^{p-1}dt\right)^{-\frac{1}{p}} &  \ \text {for}\   p>0, \\
    \left(\frac{p}{\mu(K)}\!\int_{0}^{1}\! (F^{-1}\left[F(\mu(K))(1-t)\right]-\mu(K))t^{p-1}dt+1\right)^{-\frac{1}{p}} & \  \text {for}\  p\in (-1,0),
    \end{cases}
    $$
and, for the last set inclusion, we additionally assume that $\mu\in\mathcal{M}(K)$ and that $F(x)$ is differentiable at the value $x=\mu(K).$
The equality conditions are the following:
\begin{enumerate}
    \item For the first two set inclusions there is equality of sets if, and only if, $F(0)=0$ and $F\circ g_{\mu,\hid}(K,x)=F(\mu(K))\ell_{D^\hid (K)}(x),$ where $\ell_K$ is the \textit{roof function} of $K$ defined in \eqref{eq:lk}. 
    \item For the last set inclusion, the sets are equal if, and only if, $F\circ g_{\mu,\hid}(K;x)=F(\mu(K))\ell_C(x)$ with $C=\frac{F(\mu(K))}{F^\prime (\mu(K))}\Pi^\circ_{\mu}K.$
\end{enumerate}
\end{theorem}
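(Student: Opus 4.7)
The plan is to argue ray by ray: fix $\bar\theta\in\S$, set $\rho = \rho_{D^\hid(K)}(\bar\theta)$, and study the one-variable function
\[
h_{\bar\theta}(r) := F\circ g_{\mu,\hid}(K,r\bar\theta),\qquad r\in[0,\rho].
\]
By Lemma~\ref{l:higher_covario} the function $h_{\bar\theta}$ is concave; since it attains its maximum $F(\mu(K))$ at the endpoint $r=0$, it is in fact non-increasing, and $h_{\bar\theta}(\rho)=F(0)\ge 0$.

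For the \emph{last} inclusion I would apply Lemma~\ref{l:concave} directly to $h_{\bar\theta}$ with $L = D^\hid(K)$. The chain rule together with Theorem~\ref{t:variationalformula} gives $h_{\bar\theta}'(0^+)=-F'(\mu(K))\,h_{\P_\mu K}(\bar\theta)$, so the quantity $z(\bar\theta)$ produced by that lemma equals $\frac{F(\mu(K))}{F'(\mu(K))}\rho_{\PP_\mu K}(\bar\theta)$; the conclusion $\rho\le z(\bar\theta)$ is the desired radial inequality, and the equality clause of Lemma~\ref{l:concave} translates into characterization (2). For the \emph{first} inclusion $D^\hid(K)\subseteq C(q,\mu,K)R^\hid_{q,\mu}K$, concavity of $h_{\bar\theta}$ combined with $h_{\bar\theta}(\rho)\ge 0$ yields the affine lower bound $h_{\bar\theta}(r)\ge F(\mu(K))(1-r/\rho)$, i.e., $g_{\mu,\hid}(K,r\bar\theta)\ge F^{-1}\!\bigl(F(\mu(K))(1-r/\rho)\bigr)$ for $r\in[0,\rho]$. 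Inserting this bound into the integral representation from Proposition~\ref{p:other_form} and substituting $t=r/\rho$ produces $\rho_{R^\hid_{q,\mu}K}(\bar\theta)^q\ge \rho^q C(q,\mu,K)^{-q}$ when $q>0$; the case $p\in(-1,0)$ is analogous using the alternative representation, after adding the tail $\int_\rho^{\infty}\mu(K)r^{p-1}\,dr=-\mu(K)\rho^p/p$, which accounts for the extra ``$+1$'' in the formula for $C(p,\mu,K)$. Equality forces $h_{\bar\theta}(r)=F(\mu(K))(1-r/\rho)$ for a.e.\ $r$, and then $h_{\bar\theta}(\rho)=0$ requires $F(0)=0$, giving characterization (1).

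The main obstacle is the \emph{middle} inclusion $C(q,\mu,K)R^\hid_{q,\mu}K\subseteq C(p,\mu,K)R^\hid_{p,\mu}K$ for $-1<p\le q<\infty$, which amounts to showing that the ratio
\[
A(p) \;=\; \frac{\int_0^\rho g_{\mu,\hid}(K,r\bar\theta)\,r^{p-1}\,dr}{\int_0^\rho F^{-1}\!\bigl(F(\mu(K))(1-r/\rho)\bigr)\,r^{p-1}\,dr}
\]
satisfies $A(p)^{1/p}$ being non-increasing in $p$. Under the substitution $s=h_{\bar\theta}(r)/F(\mu(K))$, whose inverse $r(s)$ is concave decreasing with $r(0)=\rho$, $r(1)=0$, integration by parts rewrites $A(p)$ as the ratio of $\int_0^1 (r(s)/\rho)^p$ to $\int_0^1 (1-s)^p$, both computed against the common positive measure $(F^{-1})'(F(\mu(K))s)\,ds$. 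This reduces the claim to a weighted one-dimensional Berwald-type monotonicity, comparing the concave function $r(s)/\rho$ with the affine function $1-s$ that shares the same endpoint values; when $F$ is the identity this is the classical Berwald inequality of 1947, and in general it follows from the analogous Borell-type argument (cf.\ \cite{HL22,HL24}). Equality again forces $r(s)/\rho=1-s$, matching characterization (1). The case $p\in(-1,0)$ reduces to the same monotonicity via the alternative integral representation.
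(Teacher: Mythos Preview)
Your treatment of the \emph{last} inclusion has a gap. The conclusion $\rho_{D^\hid(K)}(\bar\theta) \le z(\bar\theta)$ that you extract from Lemma~\ref{l:concave} only yields $D^\hid(K) \subseteq \frac{F(\mu(K))}{F'(\mu(K))}\PP_\mu K$ (this is \eqref{e:mu_set_incl_high}), but the last inclusion in the chain is the stronger statement $C(p,\mu,K)\,\rho_{R^\hid_{p,\mu}K}(\bar\theta) \le z(\bar\theta)$. By the first inclusion you yourself prove $C(p,\mu,K)\,\rho_{R^\hid_{p,\mu}K}(\bar\theta) \ge \rho_{D^\hid(K)}(\bar\theta)$, so $\rho\le z$ cannot suffice. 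The equality discussion you attach to characterization~(2) is likewise based on the wrong inequality.

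The repair is close at hand. One option is to use the \emph{full} affine upper bound $h_{\bar\theta}(r)\le F(\mu(K))\bigl(1-r/z(\bar\theta)\bigr)$ from Lemma~\ref{l:concave}, insert it into Proposition~\ref{p:other_form}, extend the integral from $[0,\rho]$ to $[0,z(\bar\theta)]$ (the integrand being nonnegative there), and substitute $t=r/z(\bar\theta)$; this gives $C(p,\mu,K)\,\rho_{R^\hid_{p,\mu}K}(\bar\theta)\le z(\bar\theta)$ directly, and tracing equality then produces characterization~(2) (compare the argument in Theorem~\ref{t:set_inclu_log}). The paper itself takes a different route: it invokes the Berwald inequality of \cite{LP23} once to get the monotonicity of $p\mapsto C(p,\mu,K)\,\rho_{R^\hid_{p,\mu}K}(\bar\theta)$, which covers the first three inclusions simultaneously (the first being the limiting case $q\to\infty$, where $C\to 1$ and $R^\hid_{\infty,\mu}K=D^\hid(K)$), and then deduces the last inclusion by letting $p\to(-1)^+$ via Proposition~\ref{p:radial_body_limit} together with an integration-by-parts computation showing $(p+1)^{-1/p}C(p,\mu,K)\to \frac{F(\mu(K))}{\mu(K)F'(\mu(K))}$. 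Your ray-by-ray lower bound for the first inclusion and your Berwald sketch for the middle one are reasonable substitutes for that citation, though note that your substitution $s=h_{\bar\theta}(r)/F(\mu(K))$ gives $r(0)=\rho$ only when $F(0)=0$, so the reduction needs a small adjustment in general.
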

 
\begin{proof}
Let $G(p):=C(p,\mu,K) \rho_{R^\hid_{p,\mu}K}(p)$. From Berwald's inequality for $F$-concave measures in \cite{LP23}, this function is non-increasing in $p$, which establishes the first three set inclusions. For the last set inclusion, we first rewrite 
$$G(p)=\frac{C(p,\mu,K)}{(p+1)^{1/p}} (p+1)^{1/p}\rho_{R^\hid_{p,\mu}K}(\theta).$$
Therefore, from Proposition~\ref{p:radial_body_limit}, it suffices to show that, as $p\to -1,$ $$\frac{C(p,\mu,K)}{(p+1)^{1/p}} \to \frac{F(\mu(K))}{F^\prime(\mu(K))\mu(K)}.$$
Indeed, from integration by parts,  for all $p\in (-1,0)\cup(0,\infty),$ one gets
$$C(p,\mu,K)=\left(\frac{F(\mu(K))}{\mu(K)}\right)^{-\frac{1}{p}}\left(\int_0^1\left[F^\prime \left(F^{-1}[F(\mu(K))(1-t)]\right)\right]^{-1}t^pdt\right)^{-\frac{1}{p}}.$$
Therefore, the result follows from \eqref{limit-s->1}. 
\end{proof}

 As a byproduct, we obtain Theorem~\ref{t:set_s_con}, which we reproduce for the convenience of the reader.

 \vskip 2mm \noindent{\bf Theorem \ref{t:set_s_con}.} {\em 
Let $K\in\conbod$ and $\hid\in\N$ be fixed. Suppose that $\mu$ is an $s$-concave Borel measure, $s>0,$ on convex subsets of  $K$. Then, for $-1< p\leq q < \infty$, one has
$$D^\hid (K)\subseteq \binom{\frac{1}{s}+q}{q}^{\frac{1}{q}} R^\hid_{q,\mu}K \subseteq \binom{\frac{1}{s}+p}{p}^{\frac{1}{p}} R^\hid_{p,\mu} K\subseteq \frac{1}{s}\mu(K)\PP_{\mu}K,$$
where the last inclusion holds if $\mu\in\mathcal{M}(K)$.

\vskip 2mm  There is an equality in any set inclusion if, and only if, $$g_{\mu,\hid}^s(K,x)=\mu(K)^s\ell_{D^\hid (K)}(x).$$ If $\mu$ is a locally finite and regular Borel measure, i.e., $s$-concave on compact subsets of its support, then $s\in (0,1/n]$ and equality occurs if, and only if, $K$ is a $n$-dimensional simplex, $\mu$ is a positive multiple of the Lebesgue measure, and $s = \frac{1}{n}$. }

\begin{proof} Applying Theorem~\ref{t:radial_F_inclusions} to $F(x)=x^s$,  one gets,  when $p>0,$
$$C(p,\mu,K)=\left(p\int_{0}^{1} (1-t)^{1/s}t^{p-1}dt\right)^{-\frac{1}{p}}=\left(\frac{p\Gamma(\frac{1}{s}+1)\Gamma(p)}{\Gamma(\frac{1}{s}+p+1)}\right)^{-\frac{1}{p}},$$
and similarly for $p\in (-1,0).$ The equality conditions from Theorem~\ref{t:radial_F_inclusions} yield that $g_{\mu,\hid}^s(K;x)$ is an affine function along rays for $x\in D^\hid (K)$. If $\mu$ is a locally finite and regular measure on compact sets, then one must have $s\in (0,1/n]$ from Borell's classification of concave measures; for such $s$-concave measures, Proposition~\ref{p:simp} above shows that  $K$ is a $n$-dimensional simplex, the density of $\mu$ is constant, and $s = \frac{1}{n}$.
\end{proof}

Note that it is assumed in Theorem \ref{t:radial_F_inclusions} that $F \ge 0$. Without this assumption,   $C(p,\mu,K)$ may tend to $0$ as $p\to \infty,$ and so $C(p,\mu,K)R^\hid_{p,\mu}K$ will tend to the origin; hence, the first set inclusion may be lost. However, the assumption that $F$ is nonnegative fails, for instance, in the important case of log-concave functions. Thus, we give a result for possibly negative $F$ as well, which is slightly weaker:

\begin{theorem}
\label{t:set_inclu_log}
Fix $\hid\in\N$. Suppose a Borel measure $\mu$ on $\R^n$ with density is finite on some $K\in\conbod$ and $Q$-concave, where $Q:(0,\mu(K)]\to (-\infty,\infty)$ is an increasing and invertible function. Then, for $-1<p\le q <\infty$, one has
$$C_Q(q,\mu,K)R^\hid_{q,\mu}K \subset C_Q(p,\mu,K)R^\hid_{p,\mu}K \subset \frac{1}{Q^\prime(\mu(K))}{\PP_{\mu}K},$$
where $C_Q(p,\mu,K)=$ $$\begin{cases}
    \left(\frac{p}{\mu(K)}\int_0^\infty Q^{-1}\left[Q(\mu(K))-t\right]t^{p-1} dt\right)^{-\frac{1}{p}} &\   \text {for } \ \ p>0, \\
    \left(\frac{p}{\mu(K)}\int_{0}^{\infty}t^{p-1} (Q^{-1}\left[Q(\mu(K)-t)\right]-\mu(K))\,dt\right)^{-\frac{1}{p}} & \text\   {for}  \ p\in (-1,0),
    \end{cases}$$    
and, for the second set inclusion, we additionally assume that $\mu\in\mathcal{M}(K)$ and that $Q(x)$ is differentiable at the value $x=\mu(K)$. In particular, if $\mu$ is log-concave, then
$$\frac{1}{\Gamma\left(1+q\right)^{\frac{1}{q}}}R^\hid_{q,\mu}K \subset \frac{1}{\Gamma\left(1+p\right)^{\frac{1}{p}}}R^\hid_{p,\mu}K \subset \mu(K){\PP_{\mu}K},$$
where $\lim_{p\to 0} \frac{1}{\Gamma\left(1+p\right)^{\frac{1}{p}}}R^\hid_{p,\mu}K$ is interpreted via continuity.
\end{theorem}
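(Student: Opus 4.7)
The strategy closely mirrors the proof of Theorem~\ref{t:radial_F_inclusions}, with adjustments to accommodate the possibility that $Q$ takes negative values. First, I would apply Lemma~\ref{l:higher_covario} with $F = Q$: for each $\bar\theta \in \S$, the function $r \mapsto Q(g_{\mu,\hid}(K, r\bar\theta))$ is concave on $[0, \rho_{D^\hid(K)}(\bar\theta)]$. Using Proposition~\ref{p:other_form}, rewrite $\rho_{R^\hid_{p,\mu} K}(\bar\theta)^p$ as a Mellin-type average of $g_{\mu,\hid}(K, r\bar\theta)$ (for $p > 0$) or of $g_{\mu,\hid}(K, r\bar\theta) - \mu(K)$ (for $p \in (-1, 0)$). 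The normalizing constants $C_Q(p, \mu, K)$ are precisely those that make $C_Q(p, \mu, K)\rho_{R^\hid_{p,\mu} K}(\bar\theta) = 1$ when $g_{\mu, \hid}(K, r\bar\theta)$ is replaced by its $Q$-affine majorant $r \mapsto Q^{-1}[Q(\mu(K)) - \alpha r]$ (with $\alpha$ matching the derivative of $Q \circ g_{\mu, \hid}$ at the origin).

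The first inclusion, $C_Q(q, \mu, K) R^\hid_{q, \mu} K \subset C_Q(p, \mu, K) R^\hid_{p, \mu} K$ for $-1 < p \le q$, then follows from the Berwald-type inequality for $Q$-concave functions established in~\cite{LP23}, which directly yields that the map $p \mapsto C_Q(p, \mu, K) \rho_{R^\hid_{p, \mu} K}(\bar\theta)$ is non-increasing in $p$.

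For the last inclusion I would follow the argument in the proof of Theorem~\ref{t:radial_F_inclusions}: factor
$$
G(p) := C_Q(p, \mu, K)\, \rho_{R^\hid_{p, \mu} K}(\bar\theta) = \frac{C_Q(p, \mu, K)}{(p+1)^{1/p}} \cdot (p+1)^{1/p} \rho_{R^\hid_{p, \mu} K}(\bar\theta).
$$
Proposition~\ref{p:radial_body_limit} identifies the second factor's limit as $\mu(K) \rho_{\PP_\mu K}(\bar\theta)$ as $p \to (-1)^+$. For the first factor, integration by parts transforms the $p \in (-1, 0)$ formula for $C_Q$ into
$$
C_Q(p, \mu, K)^{-p} = \frac{1}{\mu(K)} \int_0^\infty t^p \bigl[ Q'(Q^{-1}[Q(\mu(K)) - t])\bigr]^{-1}\, dt,
$$
whose integrand tends to $1/Q'(\mu(K))$ at $t = 0^+$. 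Applying the limit identity~\eqref{limit-s->1} with $s = -p$ gives $(1+p) C_Q(p, \mu, K)^{-p} \to 1/(\mu(K) Q'(\mu(K)))$; a short manipulation of exponents then yields $C_Q(p, \mu, K)/(p+1)^{1/p} \to 1/(\mu(K) Q'(\mu(K)))$. Multiplying the two limits produces $G(p) \to \rho_{\PP_\mu K}(\bar\theta)/Q'(\mu(K))$, and combined with the monotonicity from the first step this gives the second set inclusion.

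The log-concave specialization is a direct computation: with $Q(t) = \log t$ one has $Q^{-1}[Q(\mu(K)) - t] = \mu(K) e^{-t}$, reducing the defining integrals to (analytic continuations of) Gamma functions, so $C_Q(p, \mu, K) = 1/\Gamma(1 + p)^{1/p}$; since $Q'(\mu(K)) = 1/\mu(K)$, we recover $1/Q'(\mu(K)) = \mu(K)$, matching the stated chain, with the $p \to 0$ case handled by continuity. The main obstacle will be the integration-by-parts plus limit computation for $C_Q(p, \mu, K)/(p+1)^{1/p}$: one must verify that the boundary terms at $0$ and $\infty$ vanish for a general increasing invertible $Q$, which may require mild integrability of $Q^{-1}$ at $-\infty$ (a condition automatically verified in the log-concave case thanks to the exponential decay of $Q^{-1}$).
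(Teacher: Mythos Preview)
Your argument for the first inclusion matches the paper exactly (invoking the Berwald-type inequality from \cite{LP23}). For the second inclusion, however, you follow the limit route of Theorem~\ref{t:radial_F_inclusions}: factor $G(p)$ and send $p\to(-1)^+$ via Proposition~\ref{p:radial_body_limit} and \eqref{limit-s->1}. The paper instead argues \emph{directly} for each fixed $p$: applying Lemma~\ref{l:concave} to $Q\circ g_{\mu,\hid}(K,\cdot)$ gives
\[
g_{\mu,\hid}(K,r\bar\theta)\le Q^{-1}\!\left[Q(\mu(K))-\frac{Q'(\mu(K))}{\rho_{\PP_\mu K}(\bar\theta)}\,r\right],
\]
which is substituted into the Mellin formula of Proposition~\ref{p:other_form}; the change of variable $t=Q'(\mu(K))\,r/\rho_{\PP_\mu K}(\bar\theta)$ then produces the defining integral for $C_Q(p,\mu,K)^{-p}$ times $(\rho_{\PP_\mu K}(\bar\theta)/Q'(\mu(K)))^p$, yielding $C_Q(p,\mu,K)\,\rho_{R^\hid_{p,\mu}K}(\bar\theta)<\rho_{\PP_\mu K}(\bar\theta)/Q'(\mu(K))$ at once. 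This bypasses the integration-by-parts and the limit identity entirely, and so avoids the boundary-term worry you raise at the end. Your route does go through (the boundary terms vanish: at $t=0$ because $Q^{-1}[Q(\mu(K))-t]-\mu(K)\sim -t/Q'(\mu(K))$ tames the blow-up of $t^p$, and at $t=\infty$ because $t^p\to 0$ while $Q^{-1}\to 0$), and it has the virtue of parallelism with Theorem~\ref{t:radial_F_inclusions}; but the paper's direct bound is shorter and needs no auxiliary integrability hypothesis on $Q^{-1}$. The log-concave specialization is handled identically in both approaches.
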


 \begin{proof}
The first inclusion follows from the second case of Berwald's inequality for measures, established in \cite{LP23}. For the second inclusion, we can assume without loss of generality that $p>0$. Then, for every $\bar\theta\in\S$, one has, by Lemma~\ref{l:concave} applied to $f=Q\circ g_{\mu,m}(K,\cdot)$,

$$0\le g_{\mu,\hid}(K;r\bar\theta)\le Q^{-1}\left[Q(\mu(K))\left(1-\frac{Q^\prime(\mu(K))}{Q(\mu(K))}\frac{r}{\rho_{\PP_{\mu} K}(\bar\theta)}\right)\right].$$
Since $Q(\mu(K))$ may possibly be negative, we shall leave $Q(\mu(K))$ inside the integral and obtain, with the help of Proposition \ref{prop-4.3-1}: 
\begin{align*}
 &\rho^p_{R^\hid_{p,\mu} K}(\bar\theta)
 \\
 &=\frac{p}{\mu(K)}\int_{0}^{\rho_{D^\hid (K)}(\bar\theta)} g_{\mu,\hid}(K;r\bar\theta) r^{p-1} d r 
\\
&\leq \frac{p}{\mu(K)}\int_{0}^{\rho_{D^\hid (K)}(\bar\theta)} Q^{-1}\left[Q(\mu(K))\left(1-\frac{Q^\prime(\mu(K))}{Q(\mu(K))}\frac{r}{\rho_{\PP_{\mu} K}(\bar\theta)}\right)\right]r^{p-1} d r.
\\
&=\left(\frac{\rho_{\PP_{\mu} K}(\bar\theta)}{Q^\prime(\mu(K))}\right)^p\frac{p}{\mu(K)}\int_{0}^{Q^\prime(\mu(K))\frac{\rho_{D^\hid (K)}(\bar\theta)}{\rho_{\PP_{\mu} K}(\bar\theta)}} Q^{-1}\left[Q(\mu(K))-t\right]t^{p-1}dt.
\end{align*}
Consequently, 
$C_Q(p,\mu,K)\rho_{R^\hid_{p,\mu} K}(\bar\theta) < \frac{1}{Q^\prime(\mu(K))}\rho_{\PP_{\mu} K}(\bar\theta),$
which yields the result.
\end{proof}

\section{Generalization of Chord Integral Inequalities}
\label{sec:genvol}

In this section, we demonstrate how Lemma~\ref{t:chak} serves as a prototype of a more general theorem, which is of independent interest.  We require some background notions.

Let $\mathbb{G}: (0, \infty)\times \s \rightarrow (0, \infty)$ be a continuous function such that $\mathbb{G}(0,\theta)=0$ for almost every $\theta\in \s$ and $$\mathbb{G}_t(t, \theta)=\frac{\partial \mathbb{G}(t, \theta)}{\partial t}$$ is continuous on $(0, \infty)\times \s.$ Let $G=\mathbb{G}_t$ and assume that $G: (0, \infty)\times \s \rightarrow (0, \infty)$ is a continuous function such that, for any $R\in (0, \infty)$, \begin{align}
   \int_{(0, R)\times \s}G(r, \theta)\,dr\,d\theta<\infty. \label{assump-G}
\end{align} The general dual volume of a star body $L\subset \Rn$ \cite{GHX19, GHXY20} can be formulated by $$\widetilde{V}_{\mathbb{G}}(L)=\frac{1}{n}\int_{\s}\mathbb{G}(\rho_L(\theta), \theta)\,d\theta=\frac{1}{n}\int_{\s} \int_0^ {\rho_L(\theta)} G(r, \theta)\,dr\,d\theta.$$

The goal of this section is to generalize Lemma \ref{t:chak} to the setting of the general dual volume. 

\begin{theorem}[Two Chord Integral Inequalities for Generalized Volume]
\label{ber_type}
 Suppose $f: [0, \infty)\times \s \rightarrow [0, \infty)$ is a non-negative function supported on $L\in\conbod_0$ such that for every $\theta\in\s,$ $f(t, \theta)$ is concave on $t\in (0, \rho_L(\theta))$. Let $h: [0, \infty)\rightarrow [0, \infty)$ be an increasing, non-negative, differentiable function. Let $G: (0, \infty)\times \s \rightarrow (0, \infty)$ be a function such that \eqref{assump-G} holds. Fix some $\alpha >-1$.
 The following statements hold. 
 \begin{enumerate}
     \item If $G(ur, \cdot)\geq u^{\alpha}G(r, \cdot)$ for $u\in[0,1]$, $r>0,$ one has \begin{equation}\int_{\s}\int_0^{\rho_L(\theta)}h(f(r, \theta))G(r, \theta)\,dr\,d\theta \geq n \beta_{\alpha} \widetilde{V}_{\mathbb{G}}(L),
     \label{globai-increasing--0}
     \end{equation}
     where
 \begin{align*}
\beta_{\alpha} = \inf_{\theta\in \s}\left[ (\alpha+1)  \int_0^{1} h\left(f(0, \theta)\tau\right)(1-\tau)^{\alpha}   d\tau\right].
\end{align*}

There is equality in \eqref{globai-increasing--0} if, and only if,  $G(r, \cdot)$ is homogeneous of degree $\alpha$, $f(0, \theta)$ is a constant on $\s,$ and $$f(r, \theta)=f(0, \theta)\left(1-(\rho_L(\theta))^{-1}r\right)=f(0, \theta)\aff L(r\theta),$$ that is, $f$ is affine on each ray. 

     \item Suppose that $G(ur, \cdot)\leq u^{\alpha}G(r, \cdot)$ for $u\in[0,1]$, $r>0$ and $$\max_{0\leq r\leq \rho_L(\theta)} f(r, \theta)=f(0, \theta)$$ for each $\theta\in \s$. Let $$\Omega_f:=\left\{\theta\in\s: \frac{\partial f(r, \theta)}{\partial r}\bigg|_{r=0^+}=0\right\}$$ and let
     \begin{align*}
     \beta_{b}=  \sup_{\theta\in \s\setminus \Omega_f}\left[ (\alpha+1)  \int_0^{1} h\Big(f(0, \theta)\tau\Big)(1-\tau)^{\alpha}   d\tau\right].
 \end{align*} 

     Then,
     \begin{equation}
     \begin{split}
    \int_{\s}&\int_0^{\rho_L(\theta)}h(f(r, \theta))G(r, \theta)\,dr\,d\theta 
    \\
    &\leq \beta_{b}\int_{\s\setminus\Omega_f} \int_0^{\rho_{\widetilde{L}(\theta)}}G(r,\theta)\,dr\,d\theta  \\
&\quad\quad+\int_{\Omega_f}\int_0^{\rho_L(\theta)}h(f(0,\theta))G(r,\theta)\,dr\,d\theta,
    \end{split}
    \label{globai-increasing--1}
\end{equation}
where $\rho_{\widetilde{L}}(\theta)=-\left( \frac{\partial f(r, \theta)}{\partial r}\big|_{r=0^+}\right)^{-1}f(0, \theta).$
If $\Omega_f=\emptyset$ and $\frac{\partial f(r, \theta)}{\partial r}\big|_{r=0^+}$ is also continuous for almost all $\theta\in \s,$ then $\rho_{\widetilde{L}}(\theta)$ is the radial function of some star body $\widetilde{L}$ that contains $L$. In this instance, \eqref{globai-increasing--1} becomes
\begin{equation}
    \int_{\s}\int_0^{\rho_L(\theta)}h(f(r, \theta))G(r, \theta)\,dr\,d\theta \leq n\beta_{b}\widetilde{V}_{\mathbb{G}}(\widetilde{L}).
    \label{globai-increasing--2}
\end{equation}
 \end{enumerate}

There is equality in \eqref{globai-increasing--1} if, and only if: 
\begin{enumerate}
    \item for $\theta\in\s\setminus\Omega_f$, one has that $\rho_{\widetilde{L}}(\theta)=\rho_{\supp(f)}(\theta),$ $f(0, \theta)$ is a constant on $\s,$ and $$f(r, \theta)=f(0, \theta)\left(1-(\rho_L(\theta))^{-1}r\right)=f(0, \theta)\aff L(r\theta),$$ that is $f$ is affine on each ray, and, for every $r>0,$ $G(r,\cdot)$ has homogeneity of degree $\alpha$; 
    \item for almost every $\theta\in\Omega_f,$ one has that $f(r,\theta)=f(0,\theta)$ for every $r\in [0,\rho_{L}(\theta)].$
\end{enumerate}
If $\Omega_f=\emptyset$, then equality in \eqref{globai-increasing--2} yields $L=\widetilde{L}.$
\end{theorem}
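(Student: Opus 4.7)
My plan is to reduce both inequalities to one-dimensional estimates along each radial ray, and then to apply Chebyshev's sum inequality with respect to the probability measure $d\mu(u):=(\alpha+1)u^\alpha\,du$ on $[0,1]$. Fix $\theta\in\s$ and write $\rho:=\rho_L(\theta)$ and $f_0:=f(0,\theta)$. Since $f(\cdot,\theta)$ is non-negative and concave on $[0,\rho]$ with $f(\rho,\theta)=0$ (because $\supp f\subseteq L$), concavity at once yields
\[
f(r,\theta)\ \ge\ f_0\,\aff L(r\theta)\ =\ f_0\bigl(1-r/\rho\bigr),
\]
which drives case~(1). In case~(2), where $f_0$ is additionally the radial maximum of $f(\cdot,\theta)$, Lemma~\ref{l:concave} provides the complementary upper bound $f(r,\theta)\le f_0\bigl(1-r/\rho_{\widetilde L}(\theta)\bigr)$ on $[0,\rho_{\widetilde L}(\theta)]$ for $\theta\notin\Omega_f$, together with $\rho_L(\theta)\le\rho_{\widetilde L}(\theta)$. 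Composition with the increasing function $h$ turns these into the pointwise comparisons that we will integrate.

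Next, make the substitution $r=\rho u$ with $u\in[0,1]$ (using $\rho=\rho_L(\theta)$ in case~(1) and $\rho=\rho_{\widetilde L}(\theta)$ in case~(2)) and factor $G(\rho u,\theta)=u^{\alpha}\widetilde G(u)$. The hypothesis $G(ur,\cdot)\ge u^\alpha G(r,\cdot)$ is precisely the statement that $r\mapsto G(r,\cdot)/r^\alpha$ is non-increasing, so $u\mapsto\widetilde G(u)$ is non-increasing; the opposite hypothesis in case~(2) yields non-decreasing $\widetilde G$. The function $H(u):=h(f_0(1-u))$ is non-increasing in both cases. Chebyshev's sum inequality on the probability space $([0,1],d\mu)$ gives
\[
\int_0^1 H(u)\widetilde G(u)\,d\mu(u)\ \ge\ \Bigl(\int_0^1 H\,d\mu\Bigr)\!\Bigl(\int_0^1 \widetilde G\,d\mu\Bigr)
\]
when $H$ and $\widetilde G$ are comonotonic (case~(1)), with the inequality reversed in the anti-monotonic case~(2). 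Unwinding the substitution, and using $\int_0^1 H\,d\mu=(\alpha+1)\!\int_0^1 h(f_0\tau)(1-\tau)^\alpha\,d\tau$, this reads
\[
\int_0^{\rho} h\!\left(f_0\bigl(1-r/\rho\bigr)\right)\!G(r,\theta)\,dr\ \ge\ (\alpha+1)\!\int_0^1\! h(f_0\tau)(1-\tau)^\alpha\,d\tau\cdot\!\int_0^{\rho}\!G(r,\theta)\,dr,
\]
with reversed inequality in case~(2).

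For the assembly in case~(1), I combine the concavity lower bound on $f$ with the display above, estimate $(\alpha+1)\int_0^1 h(f_0\tau)(1-\tau)^\alpha\,d\tau\ge\beta_\alpha$ for every $\theta$, and integrate $d\theta$ to obtain $n\beta_\alpha\widetilde V_{\mathbb G}(L)$, which is \eqref{globai-increasing--0}. For case~(2), for $\theta\notin\Omega_f$ I use the concavity upper bound on $f$, enlarge the interval from $[0,\rho_L(\theta)]$ to $[0,\rho_{\widetilde L}(\theta)]$ (legitimate since the upper-bound integrand is non-negative there), apply the reverse Chebyshev estimate, and dominate the resulting $\theta$-factor by $\beta_b$. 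For $\theta\in\Omega_f$, concavity of $f(\cdot,\theta)$ with vanishing derivative at the origin forces $f(\cdot,\theta)\le f_0$, so $h(f(r,\theta))\le h(f_0)$ and the $\Omega_f$-term follows immediately. Summing over $\s\setminus\Omega_f$ and $\Omega_f$ produces \eqref{globai-increasing--1}, and, when $\Omega_f=\emptyset$ and $\rho_{\widetilde L}$ is continuous on $\s$, reassembles into $n\beta_b\widetilde V_{\mathbb G}(\widetilde L)$, which is \eqref{globai-increasing--2}. The equality analysis retraces each inequality: equality in the concavity bound forces $f(r,\theta)$ to be the affine chord, and in case~(2) forces $\rho_L(\theta)=\rho_{\widetilde L}(\theta)$ so that the enlarged interval contributes nothing; equality in Chebyshev, where $H$ is strictly monotonic wherever $h$ is strictly increasing, forces $\widetilde G$ to be constant in $u$, i.e.\ $G(r,\cdot)$ is exactly homogeneous of degree $\alpha$ in $r$; equality in $\beta_\alpha=\inf_\theta$ or $\beta_b=\sup_\theta$ pins $f_0$ to be independent of $\theta$; and on $\Omega_f$ the bound $h(f(r,\theta))\le h(f_0)$ tightens to $f(\cdot,\theta)\equiv f_0$.

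The main obstacle, to my eye, is identifying the correct test measure: with $d\mu(u)=(\alpha+1)u^\alpha\,du$ the homogeneity hypothesis on $G$ is transformed exactly into a monotonicity statement about $\widetilde G(u)=G(\rho u,\theta)/u^\alpha$, so the whole estimate collapses to one application of Chebyshev's correlation inequality. Once that observation is in hand, the remaining ingredients---the chord bound, Lemma~\ref{l:concave}, the separate treatment of $\Omega_f$, and the equality analysis---are routine bookkeeping modeled on the proof of Lemma~\ref{t:chak}.
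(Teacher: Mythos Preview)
Your proof is correct and takes a genuinely different, cleaner route than the paper. The paper introduces an auxiliary one-parameter family
\[
\psi(y)=\beta'\int_{\s}\int_0^{y\rho_L(\theta)}G\,dr\,d\theta-\int_{\s}\int_0^{y\rho_L(\theta)}h\!\left(f(0,\theta)\Bigl[1-\tfrac{r}{y\rho_L(\theta)}\Bigr]\right)G\,dr\,d\theta,
\]
shows $\psi(0^+)=0$, and proves $\psi'(y)\le 0$ by differentiating under the integral, applying the homogeneity hypothesis on $G$, and an integration by parts in the $h$-variable; the admissible $\beta'$ then emerges from this computation. You instead recognize that $G(ur,\cdot)\ge u^\alpha G(r,\cdot)$ is precisely the statement that $r\mapsto G(r,\cdot)/r^\alpha$ is non-increasing, after which a single application of Chebyshev's correlation inequality against the probability measure $(\alpha+1)u^\alpha\,du$ on $[0,1]$ does the whole job. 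Your approach delivers the ray-by-ray estimate immediately---which the paper records only afterward as Corollary~\ref{corr:ber_type}---and makes the equality analysis more transparent, since equality in Chebyshev forces $\widetilde G$ constant, i.e.\ exact $\alpha$-homogeneity of $G$. One minor imprecision: you assert $f(\rho,\theta)=0$, but the hypotheses only give $f(\rho,\theta)\ge 0$; this still suffices for the chord bound $f(r,\theta)\ge f_0(1-r/\rho)$, so nothing downstream is affected.
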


\begin{proof}
We first show inequality \eqref{globai-increasing--0}. For $L\in \conbod_0,$ let 
\begin{align}
 H(L)=\int_{\s}\int_0^{\rho_L(\theta)}h(f(r, \theta))G(r, \theta)\,dr\,d\theta. \label{def-HL}   
\end{align}   Note that the function $H(L)$ is well defined by \eqref{assump-G}. 
For $\theta\in \s$, let $g_{\theta}: [0, \rho_L(\theta))\times \s \rightarrow \mathbb{R}^{+}$ be given by
$$
g_{\theta} (r)=f(0, \theta)\aff L(r\theta)=f(0, \theta) \Big(1-
\frac{r}{\rho_L(\theta)}\Big).
$$ Since $f(t, \cdot)$ is concave on $t\in [0, \infty)$, one has, \begin{align*}f(r, \theta)=f\Big(\frac{r}{\rho_L(\theta)}\rho_L(\theta), \theta\Big)&\geq  \frac{r}{\rho_L(\theta)} f\Big(\rho_L(\theta), \theta\Big)+ f(0, \theta) \left(1-\frac{r}{\rho_L(\theta)}\right)
\\
&\geq f(0, \theta) \Big(1-\frac{r}{\rho_L(\theta)}\Big).
\end{align*}
Since $h$ is increasing, one has $h(f(r, \theta)) \geq h(g_{\theta}(r)).$ Therefore, the monotonicity of the integral implies that
\begin{equation}H(L)   \geq \int_{\s} \int_{0}^{\rho_{L}(\theta)} h(g_{\theta}(r)) G(r, \theta)   d r d \theta.
\label{e:ch1}
\end{equation}
 For $y\in [0, 1],$ let
  \begin{align}
      \psi(y):= \beta ^\prime &\int_{\s}\int_0^{y\rho_L(\theta)} G(r, \theta)\, dr\,d\theta  
      \\
      &- \int_{\s}\int_0^{y\rho_L(\theta)} h\left(f(0,\theta)\left[1 - \frac{r}{y\rho_L(\theta)}\right]\right)G(r, \theta)  \,dr\,d\theta, \label{def-psi-1}
  \end{align} where  $\beta^\prime > 0$ is a constant, independent of the direction $\theta$ and the function $G$, chosen such that $\psi(y) \leq 0$ on $y\in [0, 1]$.

We now show that $\beta^\prime$ exists and that $\beta^\prime=\beta_{\alpha}$ for almost every $\theta\in \s$. By hypothesis, we have that $G(r, \theta)$ is continuous on $(0,\infty)\times \s$ and bounded in the sense of \eqref{assump-G}. Additionally, $h$ is integrable on each segment $(0, s] \subset (0,\infty).$ Indeed, since $h$ is an increasing function, it is piece-wise continuous almost everywhere, and therefore it is dominated on each segment by an integrable function. Consequently, we may assert that $\psi (y) \to 0$ as $y \to 0^+$. Since $\psi$ is absolutely continuous on each $[a,b] \subset (0,y]$, $\psi$ may be represented by 
 \[
\psi(y) = \psi(a) + \int_a^y \psi'(s)\, ds.
\]

In order to have $\beta^\prime >0$ such that $\psi (y) \leq 0$, it suffices for $\beta^\prime$ to be selected so that $\psi ^\prime(y) \leq 0$ for almost every $y \in (0,1]$. Differentiation of $\psi $ yields the representation
\begin{align*}
\psi^\prime(y) &= \beta^\prime I_{G, L}(y)-h(0) I_{G, L}(y)
\\
&\quad-\int_{\s}f(0, \theta) \int_0^{y\rho_{L}(\theta)} h^\prime\left(f(0, \theta)\left[1 - \frac{r}{y\rho_{L}(\theta)}\right]\right)\frac{rG(r, \theta) }{y^2\rho_{L}(\theta)} \,dr\,d\theta, 
\end{align*} where, from the positivity of $G$, we have $$I_{G, L}(y)=\int_{\s} G(y\rho_{L}(\theta), \theta)\rho_{L}(\theta)\,d\theta>0.$$ This further yields that 
\begin{align*}
&\beta ^\prime \leq h(0)
\\
&+ I_{G, L}(y)^{-1}\int_{\s}f(0, \theta) \int_0^{y\rho_{L}(\theta)} h^\prime\left(f(0, \theta)\left[1 - \frac{r}{y\rho_{L}(\theta)}\right]\right)\frac{rG(r, \theta) }{y^2\rho_{L}(\theta)} \,dr\,d\theta,
\end{align*}
or equivalently, applying the change of variables $r=uy\rho_L(\theta)$, we see that $\beta^\prime$ must satisfy
\begin{equation}
\begin{split}
    &\beta ^\prime \leq h(0)
    \\
    &+ I_{G, L}(y)^{-1}\!\!\int_{\s}\!\!f(0, \theta)\!\int_0^{1}\! h^\prime\!\left(\!f(0, \theta)\!\left[1\! -\!u\right]\!\right)\!uG(uy\rho_L(\theta), \theta) \rho_{L}(\theta)  dud\theta.
    \end{split}
 \label{determine-beta}
\end{equation}

By using $G(ur, \cdot)\geq u^{\alpha}G(r, \cdot)$ for $u\in[0,1]$, $r>0$ and some constant $\alpha>-1$, we can get, by letting $\tau=1-u$ and by using integration by parts, 
\begin{align}
    &\int_{\s}f(0, \theta) \int_0^{1} h^\prime\left(f(0, \theta)\left[1 -u\right]\right)uG(uy\rho_L(\theta), \theta) \rho_L(\theta) dud\theta \nonumber \\
     &\geq \int_{\s}f(0, \theta) \int_0^{1} h^\prime\left(f(0, \theta)\left[1 -u\right]\right)u^{\alpha+1}G(y\rho_L(\theta), \theta) \rho_L(\theta) \,du\,d\theta \nonumber \\ 
    &= \int_{\s}f(0, \theta) \int_0^{1} h^\prime\left(f(0, \theta)\tau\right)(1-\tau)^{\alpha+1}G(y\rho_L(\theta), \theta) \rho_L(\theta)\, d\tau d\theta  \nonumber \\ 
    &= \int_{\s} \left(f(0, \theta) \int_0^{1} h^\prime\left(f(0, \theta)\tau\right)(1-\tau)^{\alpha+1}   d\tau \right) G(y\rho_L(\theta), \theta)\rho_L(\theta) d\theta \nonumber \\ 
    &= \int_{\s} \!\left(-h(0)+ (\alpha+1) \int_0^{1}\! h\left(f(0, \theta)\tau\right)(1-\tau)^{\alpha}   d\tau \right) G(y\rho_L(\theta), \theta) \rho_L(\theta)d\theta. \label{add-11-11}
\end{align} In view of \eqref{determine-beta}, one can just let  
\begin{align}
 \beta ^\prime =  \inf_{\theta\in \s}\left[ (\alpha+1)  \int_0^{1} h\Big(f(0, \theta)\tau\Big)(1-\tau)^{\alpha} \, d\tau\right],       \label{add-22-22} \end{align} which satisfies our requirement.   
Thus, \eqref{def-HL}, \eqref{e:ch1} and \eqref{def-psi-1} imply that $\psi(1)\leq 0$ and then 
\begin{align*}
\int_{\s}\int_0^{\rho_L(\theta)}&h(f(r, \theta))G(r, \theta)\,dr\,d\theta 
\\
&\geq  \beta_{\alpha} \int_{\s} \int_0^{\rho_L(\theta)}  G(r,\theta), dr\,d\theta =n \beta_{\alpha}\widetilde{V}_{\mathbb{G}}(L).
\end{align*} Furthermore, we see that equality occurs when $G(r, \cdot)$ is $\alpha$-homogeneous, $f(0, \theta)$ is a constant on $\s,$ and $f(r, \theta)=f(0, \theta)\left(1-(\rho_L(\theta))^{-1}r\right)=f(0, \theta)\aff L(r\theta),$ that is $f$ is affine on each ray; these conditions also are the necessary conditions to have the equality. To see the latter one, assume the equality to be true. Then inequalities \eqref{e:ch1} and  \eqref{add-11-11} both become equalities and hence   $f(r, \theta)=f(0, \theta)\aff L(r\theta)$ and $G(r, \cdot)$ has homogeneity of degree $\alpha$, respectively. Moreover,  \eqref{add-11-11} and \eqref{add-22-22} yield that $f(0, \theta)$ must be a constant over $\theta\in \s$, due the continuity of $f(0, \theta)$ and the monotonicity of $h.$ This completes the proof for the first inequality and its equality characterization.

Now let us show \eqref{globai-increasing--1}, i.e. prove the case when  $G: (0, \infty)\times \s \rightarrow (0, \infty)$ is a function such that  \eqref{assump-G} holds, and   $G(ur, \cdot)\leq u^{\alpha}G(r, \cdot)$ for $u\in[0,1]$, $r>0$ and some constant $\alpha>-1$.  The hypotheses $\max_{0\leq r\leq \rho_L(\theta)} f(r, \theta)=f(0, \theta)$ for all $\theta\in \s$ and the concavity of $f(r, \cdot)$ on $r\in (0,\infty)$ yield that for almost every $\theta\in\s,$
$$ \frac{\partial f(r, \theta)}{\partial r}\bigg|_{r=0^+}\leq 0.$$
Consequently, we have that, for $\theta\in\Omega_f$ and $r\in [0,\rho_L(\theta)]$ that
$$0\leq f(r, \theta) \leq f(0, \theta )\left[1+\frac{ \frac{\partial f(r, \theta)}{\partial r}\big|_{r=0^+}}{f(0, \theta)}r\right]=f(0, \theta ).$$
Therefore,
\begin{align*}\int_{\Omega_f}\int_0^{\rho_L(\theta)}h(f(r, \theta))G(r, \theta) \,dr\,d\theta
&\leq \int_{\Omega_f}\int_0^{\rho_L(\theta)}h\left(f(0,\theta)\right)G(r, \theta)\,dr\,d\theta.\end{align*}

On the other hand, for $\theta\in\s\setminus\Omega_f$ and $r\in [0,\rho_L(\theta)]$ we obtain
$$0\leq f(r, \theta) \leq f(0, \theta )\left[1+\frac{ \frac{\partial f(r, \theta)}{\partial r}\big|_{r=0^+}}{f(0, \theta)}r\right]=f(0, \theta )\left[1-\frac{r}{\rho_{\widetilde{L}}(\theta)}\right].$$ Since this is true for all $r\in [0,\rho_L(\theta)]$, one gets $\rho_L(\theta) \leq \rho_{\widetilde{L}}(\theta)$ for $\theta\in\s\setminus \Omega_f$. Hence,  one has
\begin{align*}\int_{\s\setminus \Omega_f}&\int_0^{\rho_L(\theta)}h(f(r, \theta))G(r, \theta) \,dr\,d\theta
\\
&\leq \int_{\s\setminus \Omega_f}\int_0^{\rho_L(\theta)}h\left(f(0,\theta) \left[1-\frac{r}{\rho_{\widetilde{L}}(\theta)}\right]\right)G(r, \theta)\,dr\,d\theta
\\
&\leq \int_{\s\setminus \Omega_f}\int_0^{\rho_{\widetilde{L}}(\theta)} h\left(f(0,\theta)\left[1-\frac{r}{\rho_{\widetilde{L}}(\theta)}\right]\right)G(r,\theta)\,dr\,d\theta.\end{align*}
The proof for $\theta\in\s\setminus\Omega_f$ then follows similarly to the first case (by changing the direction of the inequalities and replacing the infimum by the supremum).
In the case that $\Omega_f=\emptyset,$ we remark that $\rho_L(\theta) \leq \rho_{\widetilde{L}}(\theta)$ implies $L\subseteq \widetilde{L}.$ If there is equality in the inequality in this instance, then  $\rho_L(\theta) = \rho_{\widetilde{L}}(\theta)$ for every $\theta\in\s,$ yielding $L=\widetilde{L}$; the remaining equality conditions following analogously to the previous inequality.
\end{proof}
\begin{remark}
    The following variant of \eqref{globai-increasing--0} holds, whose proof is the same. Let $H$ be a subspace of $\R^n$ with dimension $j$. Then, for an explicit $\beta_{\alpha,H}$
    \begin{equation}\int_{\s\cap H}\int_0^{\rho_{L\cap H}(\theta)}h(f(r, \theta))G(r, \theta)\,dr\,d\theta \geq j \beta_{\alpha,H} \widetilde{V}_{\mathbb{G}}(L\cap H).
     \label{globai-increasing--0_0}
     \end{equation}
     One can apply this result to obtain the following. Firstly, consider a subspace of $\R^n$ with the following structure: 
    $H=H_1\otimes \cdots \otimes H_m$, where each $H_i$ is dimension $n_i \in \{1,\dots,n-1,n\}$. Next, apply \eqref{globai-increasing--0_0} with $L=D^m(K)$, $\mu=\mu_1\times \cdots \times \mu_m$, with each $\mu_i$ a radially decreasing measure on $\R^{n}$, and $G(r,\theta)=\phi(r\theta)r^{nm-1}$, with $\phi$ the density of $\mu$. Then, one obtains \cite[Theorem 3]{Ro20}.
\end{remark}

Observe that, in Theorem~\ref{ber_type}, if $\Omega_f=\emptyset$ and $f(0, \theta)$ is a constant function on $\theta\in \s$, then $\beta_{\alpha}=\beta_b.$ In particular,  $f(r, \theta)=f_1(r\theta)$ for some function $f_1:\R^n\rightarrow [0, \infty),$ and so $$\beta_{\alpha}=\beta_b=  (\alpha+1)  \int_0^{1} h\left(f_1(0)\tau\right)(1-\tau)^{\alpha}   d\tau.$$

In Theorem~\ref{ber_type}, we have actually proven something stronger than asserted, since bounds were done on the integral over $\R$ and not over $\s.$ We outline these local versions as a corollary.
\begin{corollary}
\label{corr:ber_type}
      Suppose $f: [0, \infty)\times \s \rightarrow [0, \infty)$ is a non-negative function supported on $L\in\conbod_0$ such that $f(t, \cdot)$ is concave on $t\in [0, \infty)$,  $h: [0, \infty)\rightarrow [0, \infty)$ is an increasing, non-negative, differentiable function, and  $G: (0, \infty)\times \s \rightarrow (0, \infty)$ is a function such that \eqref{assump-G} holds. Fix some constant $\alpha >-1.$ The following statements hold. 
      \begin{enumerate}
      \item If $G(ur, \cdot)\geq u^{\alpha}G(r, \cdot)$ for $u\in[0,1]$, $r>0,$ then, for almost every $\theta\in\s,$
      \begin{align*} \int_0^{\rho_L(\theta)}&h(f(r, \theta))G(r, \theta)dr 
      \\
      & \geq \left(\int_0^{\rho_L(\theta)} G(r, \theta)\, dr\right) \cdot \left( (\alpha+1)  \int_0^{1} h\left(f(0, \theta)\tau\right)(1-\tau)^{\alpha}   d\tau\right) \\ & =\beta_{\alpha}(\theta)\left(\int_0^{\rho_L(\theta)} G(r, \theta)\, dr\right)=\beta_{\alpha}(\theta)\mathbb{G}(\rho_L(\theta), \theta); \end{align*}
      
          \item If $G(ur, \cdot)\leq u^{\alpha}G(r, \cdot)$ for $u\in[0,1]$, $r>0,$ and $$\max_{0\leq r\leq \rho_L(\theta)} f(r, \theta)=f(0, \theta),$$ then, for each $\theta\in \s\setminus \Omega_f$, with $\Omega_f$ defined as in Theorem~\ref{ber_type},
          \begin{align*} \int_0^{\rho_L(\theta)}& h(f(r, \theta))G(r, \theta)\,dr
          \\
          &\leq \left(\int_0^{\rho_{\widetilde{L}}(\theta)} G(r, \theta)\, dr\right) \cdot \left( (\alpha+1)  \int_0^{1} h\Big(f(0, \theta)\tau\Big)(1-\tau)^{\alpha}   d\tau\right) \\ &=\beta_{b}(\theta)\left(\int_0^{\rho_{\widetilde{L}}(\theta)} G(r, \theta)\, dr\right)=\beta_{b}(\theta)\mathbb{G}(\rho_{\widetilde{L}}(\theta), \theta). \end{align*} 
      \end{enumerate}
\end{corollary}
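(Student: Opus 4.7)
The plan is to replay the argument from the proof of Theorem~\ref{ber_type} pointwise in $\theta$, retaining the direction-dependent constants $\beta_\alpha(\theta)$ and $\beta_b(\theta)$ rather than passing to the infimum or supremum over $\s$ at the final step. The key observation is that in the proof of Theorem~\ref{ber_type} the bounds are established at the level of the integrand before the outer integration over $\s$; the uniform constants $\beta_\alpha,\beta_b$ are only used so that the conclusion can be packaged cleanly as a statement about $\widetilde V_{\mathbb G}(L)$ or $\widetilde V_{\mathbb G}(\widetilde L)$. The corollary is exactly what one obtains before this final simplification.

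More concretely, for part~(1), fix $\theta\in\s$. Concavity of $f(\cdot,\theta)$ together with $f(\rho_L(\theta),\theta)\ge 0$ gives $f(r,\theta)\ge f(0,\theta)\aff L(r\theta)$ on $[0,\rho_L(\theta)]$, and the monotonicity of $h$ transports this to $h(f(r,\theta))\ge h(f(0,\theta)\aff L(r\theta))$. Introduce the auxiliary function
\[
\psi_\theta(y) = \beta_\alpha(\theta)\int_0^{y\rho_L(\theta)} G(r,\theta)\,dr - \int_0^{y\rho_L(\theta)} h\!\left(f(0,\theta)\bigl[1-\tfrac{r}{y\rho_L(\theta)}\bigr]\right)G(r,\theta)\,dr,
\]
so that $\psi_\theta(0)=0$. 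Differentiating in $y$, changing variables $r=uy\rho_L(\theta)$, integrating by parts in $u$, and invoking $G(ur,\theta)\ge u^\alpha G(r,\theta)$ exactly as in the proof of Theorem~\ref{ber_type} shows that $\psi'_\theta(y)\le 0$ provided $\beta_\alpha(\theta)=(\alpha+1)\int_0^1 h(f(0,\theta)\tau)(1-\tau)^\alpha d\tau$. Evaluating $\psi_\theta(1)\le 0$ then yields the first pointwise inequality, with the right-hand side rewritten as $\beta_\alpha(\theta)\,\mathbb G(\rho_L(\theta),\theta)$ via the identity $\mathbb G(\rho_L(\theta),\theta)=\int_0^{\rho_L(\theta)} G(r,\theta)\,dr$.

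For part~(2), fix $\theta\in\s\setminus\Omega_f$. The assumption $f(0,\theta)=\max_r f(r,\theta)$ together with concavity of $f(\cdot,\theta)$ yields the reverse linear majorization $f(r,\theta)\le f(0,\theta)(1-r/\rho_{\widetilde L}(\theta))$, and the inequality $\rho_L(\theta)\le\rho_{\widetilde L}(\theta)$ allows the integration range to be extended from $\rho_L(\theta)$ up to $\rho_{\widetilde L}(\theta)$ without decreasing the integral (the integrand being nonnegative). Repeating the $\psi_\theta$ argument with this reversed majorization, $L$ replaced by $\widetilde L$, and the reversed estimate $G(ur,\theta)\le u^\alpha G(r,\theta)$ produces the second pointwise bound with the direction-dependent $\beta_b(\theta)$.

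The main obstacle is bookkeeping rather than analysis: one must verify that the $\beta(\theta)$ emerging from the integration-by-parts step is genuinely independent of $\rho_L(\theta)$ — which it is, since the substitution $r=uy\rho_L(\theta)$ factors the radius out entirely — and that the differential inequality $\psi_\theta'(y)\le 0$ holds for all $y\in(0,1]$ at the fixed $\theta$ once $\beta_\alpha(\theta)$ or $\beta_b(\theta)$ is set to the stated value. No new analytic ingredient beyond what appears in the proof of Theorem~\ref{ber_type} is required.
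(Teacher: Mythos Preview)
Your proposal is correct and matches the paper's approach exactly. The paper does not give a separate proof of this corollary; it simply remarks that in the proof of Theorem~\ref{ber_type} ``bounds were done on the integral over $\R$ and not over $\s$,'' so the local (pointwise-in-$\theta$) inequalities were already established before the final infimum/supremum step --- precisely the observation you build your argument on.
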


The results in Theorem \ref{ber_type} are based on the assumption \eqref{assump-G}, but a similar result can be obtained for those $G:(0, \infty)\times \s$ such that $G: (0, \infty)\times \s \rightarrow (0, \infty)$ is a continuous function, and for any $R\in (0, \infty)$, \begin{align}
   \int_{(R, \infty)\times \s}G(r, \theta)\,dr\,d\theta<\infty. \label{assump-G-inf}
\end{align} Indeed, under the above conditions on $G$ and $G(ur, \cdot)\geq u^{\alpha}G(r, \cdot)$ for $u\in[1, \infty)$ and $r>0$ with  some constant $\alpha<-1$ (note that $\alpha<-1$ is natural due to \eqref{assump-G-inf}),  if $\xi>0$ and $h: [0, \infty)\rightarrow [0, \infty)$ is an increasing, non-negative, differentiable function, then for any $y\in (0, \infty)$, one has, 
\begin{align}
      \int_{\s}\int_{y\rho_L(\theta)}^{\infty} & h\left(\xi \left[1 - \frac{y\rho_L(\theta)}{r}\right]\right)G(r, \theta)  \,dr\,d\theta
      \\
     &\geq  \widetilde{\beta_{\xi}}  \int_{\s}\int_ {y\rho_L(\theta)}^{\infty} G(r, \theta)\, dr\,d\theta
     \\
      &\geq n \widetilde{\beta_{\xi}} \widetilde{V}_{\widetilde{\mathbb{G}}}(yL);  \label{def-psi-2222-dec}
  \end{align} and its local version: for each $\theta\in \s$, 
  \begin{align}
     \int_ {y\rho_L(\theta)}^{\infty}  h\left(\xi \left[1 - \frac{y\rho_L(\theta)}{r}\right]\right)G(r, \theta) \, dr 
     \geq  \widetilde{\beta_{\xi}} \widetilde{\mathbb{G}}(y\rho_L(\theta), \theta), \label{local-22--22-dec}
  \end{align}
    where the constant $\widetilde{\beta_{\xi}}$ and the function $\widetilde{\mathbb{G}}$ are given by 
\begin{align*}
    \widetilde{\beta_{\xi}}&=-(\alpha+1)  \int_0^{1} h(\xi \tau)(1-\tau)^{-(2+\alpha)}   d\tau, \\ \widetilde{\mathbb{G}}(t, \theta)&=\int_t^{\infty} G(r, \theta)\,dr \ \ \ \ \ \mathrm{for\ all \ } (t, \theta)\in (0, \infty)\times \s,\\ \widetilde{V}_{\widetilde{\mathbb{G}}}(L)&=\frac{1}{n}\int_{\s} \widetilde{\mathbb{G}}(\rho_L(\theta), \theta)\, d\theta=\frac{1}{n}\int_{\s}\int_ {\rho_L(\theta)}^{\infty} G(r, \theta)\, dr\,d\theta .
\end{align*} Similar results hold for the case when  $G: (0, \infty)\times \s \rightarrow (0, \infty)$ is a function such that \eqref{assump-G-inf} holds, and $G(ur, \cdot)\leq u^{\alpha}G(r, \cdot)$ for $u\in[1, \infty)$ and $r>0$ with  some constant $\alpha<-1$. In this case, if $\xi>0$ and $h: [0, \infty)\rightarrow [0, \infty)$ is an increasing, non-negative, differentiable function, then for any $y\in (0, \infty)$, one has, 
 \begin{align}
      \int_{\s}\int_{y\rho_L(\theta)}^{\infty}& h\left(\xi \left[1 - \frac{y\rho_L(\theta)}{r}\right]\right)G(r, \theta)  \,dr\,d\theta
      \\
     &\leq  \widetilde{\beta_{\xi}}  \int_{\s}\int_ {y\rho_L(\theta)}^{\infty} G(r, \theta)\, dr\,d\theta= n \widetilde{\beta_{\xi}} \widetilde{V}_{\widetilde{\mathbb{G}}}(yL);   \label{def-psi-3333-dec}
  \end{align} and its local version: for each $\theta\in \s$, 
   \begin{align}\label{lacal-22-3333-dec}
     \int_ {y\rho_L(\theta)}^{\infty}  h\left(\xi \left[1 - \frac{y\rho_L(\theta)}{r}\right]\right)G(r, \theta)\, dr \leq  \widetilde{\beta_{\xi}} \widetilde{\mathbb{G}}(y\rho_L(\theta), \theta).
  \end{align}
  The proof is almost identical to that of Theorem \ref{ber_type}, and hence will be omitted. 

{\bf Funding:} The first author was supported in part by the U.S. NSF Grant DMS-2000304 and by the Chateaubriand Scholarship offered by the French embassy in the United States. The fourth author was supported in part by NSERC. Additionally, part of this work was completed while the first three authors were in residence at the Institute for Computational and Experimental Research in Mathematics in Providence, RI, during the Harmonic Analysis and Convexity program; this residency was supported by the National Science Foundation under Grant DMS-1929284. The third author was additionally supported by the Simons Foundation under Grant No. 815891 for this stay.

\end{document}